\numberwithin{equation}{section}
\newtheorem{Theorem}{Theorem}[section]
\newtheorem*{Theorem*}{Theorem}
\newtheorem{Corollary}[Theorem]{Corollary}
\newtheorem{Lemma}[Theorem]{Lemma}
\newtheorem{Proposition}[Theorem]{Proposition}
 { \theoremstyle{definition}

\newtheorem{Example}[Theorem]{Example}
\newtheorem{Remark}[Theorem]{Remark} }
\def\rdots{\reflectbox{$\ddots$}}
\newcommand{\breakingcomma}{%
 \begingroup\lccode`~=`,
 \lowercase{\endgroup\expandafter\def\expandafter~\expandafter{~\penalty0 }}}
\newcommand{\CC}{{\mathbb{C}}}
\newcommand{\PP}{{\mathbb{P}}}
\newcommand{\QQ}{{\mathbb{Q}}}
\newcommand{\RR}{{\mathbb{R}}}
\newcommand{\ZZ}{{\mathbb{Z}}}
\newcommand{\NN}{{\mathbb{N}}}
\newcommand{\calS}{{\mathcal S}}
\newcommand{\eps}{\varepsilon}
\newcommand{\SL}{{\mathrm{SL}}}
\newcommand{\GL}{{\mathrm{GL}}}
\newcommand{\epi}{{\bf e}}
\newcommand{\Jac}{\mathrm{Jac}}
\newcommand{\Fix}{\mathrm{Fix}}
\newcommand{\id}{\mathrm{id}}
\newcommand{\bx}{{\bf x}}
\newcommand{\hess}{\mathrm{hess}}
\newcommand{\age}{\mathrm{age}}
\newcommand{\QR}[2]{
\left.\raisebox{0.5ex}{\ensuremath{#1}}
\ensuremath{\mkern-3mu}\Big/\ensuremath{\mkern-3mu}
\raisebox{-0.5ex}{\ensuremath{#2}}\right.}
\def\A{{\mathcal A}}
\def\B{{\mathcal B}}
\def\p{\partial }
\newcommand\iot{\iota}
\newcommand\wt{\widetilde}
\begin{document}
\allowdisplaybreaks

\newcommand{\arXivNumber}{2307.01295}

\renewcommand{\PaperNumber}{024}

\FirstPageHeading

\ShortArticleName{Hodge Diamonds of the Landau--Ginzburg Orbifolds}
\ArticleName{Hodge Diamonds of the Landau--Ginzburg Orbifolds}

\Author{Alexey BASALAEV~$^{\rm ab}$ and Andrei IONOV~$^{\rm c}$}
\AuthorNameForHeading{A.~Basalaev and A.~Ionov}

\Address{$^{\rm a)}$~Faculty of Mathematics, National Research University Higher School of Economics,\\
\hphantom{$^{\rm a)}$}~6 Usacheva Str., 119048 Moscow, Russia}
\Address{$^{\rm b)}$~Skolkovo Institute of Science and Technology, 3~Nobelya Str., 121205 Moscow, Russia}
\EmailD{\href{mailto:a.basalaev@skoltech.ru}{a.basalaev@skoltech.ru}}

\Address{$^{\rm c)}$~Boston College, Department of Mathematics, Maloney Hall, Fifth Floor, \\
\hphantom{$^{\rm c)}$}~Chestnut Hill, MA 02467-3806, USA}
\EmailD{\href{mailto:ionov@bc.edu}{ionov@bc.edu}}

\ArticleDates{Received July 12, 2023, in final form March 06, 2024; Published online March 25, 2024}

\Abstract{Consider the pairs $(f,G)$ with $f = f(x_1,\dots,x_N)$ being a polynomial defining a~quasihomogeneous singularity and $G$ being a~subgroup of~${\rm SL}(N,\mathbb{C})$, preserving $f$. In particular, $G$ is not necessary abelian. Assume further that $G$ contains the grading operator~$j_f$ and $f$ satisfies the Calabi--Yau condition. We prove that the nonvanishing bigraded pieces of the B-model state space of $(f,G)$ form a diamond. We identify its topmost, bottommost, leftmost and rightmost entries as one-dimensional and show that this diamond enjoys the essential horizontal and vertical isomorphisms.}

\Keywords{singularity theory; Landau--Ginzburg orbifolds}

\Classification{32S05; 14J33}

\section{Introduction}

Let a polynomial $f \in \CC[x_1,\dots,x_N]$ be quasihomogeneous with respect to some positive integers~${d_0,d_1,\dots,d_N}$, i.e.,
\[
	f\bigl(\lambda^{d_1}x_1,\dots,\lambda^{d_N}x_N\bigr) = \lambda^{d_0} f(x_1,\dots,x_N), \qquad \forall \lambda \in \CC^\ast.
\]
Assume also that $x_1=\cdots=x_N = 0$ is the only critical point of $f$ and $d_1,\dots,d_N$ have no common factor. Then the zero set ${f(x_1,\dots,x_N)=0}$ defines a degree $d_0$ quasismooth hypersurface $X_f$ in $\PP(d_1,\dots,d_N)$. Such hypersurfaces became of great interest in the early 90's in the context of mirror symmetry (cf.~\cite{BH95,BH93}). In particular, if the \textit{Calabi--Yau condition} $d_0 = \sum_{k=1}^N d_k$ holds, then the first Chern class of $X_f$ vanishes and, hence, its canonical bundle is trivial meaning that~$X_f$ is a Calabi--Yau variety.

The polynomials $f$ above define the so-called \textit{quasihomogeneous singularities} and can be studied from the point of view of singularity theory. The varieties $X_f$ at the same time are the objects of K\"ahler geometry. To relate the singularity theory properties of $f$ to the K\"ahler geometry properties of $X_f$ is an important problem. This problem is in particular interesting in the context of mirror symmetry.

\subsection{Hodge diamonds of Calabi--Yau manifolds}

The state space of a Calabi--Yau manifolds $X$ is the cohomology ring $H^\ast(X)$.
This cohomology ring is naturally bigraded building up a Hodge diamond of size $D := \dim_\CC X$. In particular the following properties hold:
\begin{enumerate}\itemsep=0pt
 \item[(1)] $H^\ast(X) = \bigoplus_{p,q \in \ZZ} H^{p,q}(X)$,
 \item[(2)] $\dim H^{p,q}(X) = 0$ if $p <0$ or $q <0$ or $p > D$ or $q > D$,
 \item[(3)] $\dim H^{0,0}(X) = \dim H^{D,D}(X) = 1$,
 \item[(4)] $\dim H^{D,0}(X) = \dim H^{0,D}(X) = 1$,
 \item[(5)] there is a ``horizontal'' vector space isomorphism $H^{p,q}(X) \cong H^{q,p}(X)$,
 \item[(6)] there is a ``vertical'' vector space isomorphism $H^{p,q}(X) \cong \bigl(H^{D-q,D-p}(X)\bigr)^\vee$, where $(-)^\vee$ stands for the dual vector space
\end{enumerate}

In physics, this bigrading is coming from considering Calabi--Yau B-model associated to $X$ and A-model bigrading is obtained from it by the so-called {\itshape rotation of the diamond by $90^\circ$}.
On the level of state spaces the mirror symmetry map is an isomorphism of the cohomology vector spaces for a {\itshape dual} pair of Calabi--Yau manifolds switching A- and B-model bigradings.

More, generally if considering Calabi--Yau orbifolds in place of quasismooth varieties, one replaces the ordinary cohomology ring by the Chen--Ruan cohomology ring $H^\ast_{\rm orb}$.
This is an essential question if $H^\ast_{\rm orb}$ forms a Hodge diamond too. Some of the Hodge diamond properties above follow directly from the definitions or could be verified in the similar way to our main Theorem~\ref{theorem: main} below, while the others (like the property (4)) were not investigated in literature up to our knowledge and do not look to be straightforward.

\subsection{Landau--Ginzburg orbifolds}
Another facet of mirror symmetry is given by matching the so-called Landau--Ginzburg orbifolds in place of Calabi--Yau manifolds or orbifolds (cf. \cite{IV90,Kreu94,S20,V89,W93}). Mathematically, these are the pairs $(f,G)$ with $f$ being a quasihomogeneous polynomial with the only critical point $0 \in \CC^N$ and $G$ being a group of symmetries of~$f$.

Consider the \textit{maximal group of linear symmetries} of $f$
\[
	\GL_f := \left\lbrace g \in \GL(N,\CC) \mid f(g \cdot \bx ) = f(\bx) \right\rbrace.
\]
It is nontrivial because it contains a nontrivial subgroup $J$ generated by $j_f$
\[
	j_f \cdot (x_1,\dots,x_N) := \bigl({\rm e}^{2 \pi \sqrt{-1} d_1/d_0} x_1,\dots, {\rm e}^{2 \pi \sqrt{-1} d_N/d_0}x_N\bigr).
\]
Also important is the group $\SL_f := \GL_f \cap \SL(N,\CC)$ consisting of $\GL_f$ elements preserving the volume form of $\CC^N$.

For any $G \subseteq \GL_f$, the pair $(f,G)$ is called a \textit{Landau--Ginzburg orbifold}. One associates to it the \textit{state space}, which is the $G$-equivariant generalization of a Jacobian ring of~$f$, together with A- and B- model bigradings, which again differ by $90^\circ$-rotation from one another provided~$G$ acts by transformations with determinant~1. Since these are interdependent, within this paper we will focus on just the B-model, as it has clearer geometric interpretation, much alike bigrading on the cohomology of Calabi--Yau manifold. For this reason, we will call this state space endowed with B-model bigrading by~$\B(f,G)$.

Up till now, Landau--Ginzburg orbifolds were mostly investigated for the groups $G$ acting diagonally on $\CC^N$ and also for $f$ belonging to a very special class of polynomials~-- the so-called invertible polynomials (cf.\ \cite{BT2,BTW17,BTW16,FJJS,K03,K06,K09}). Also some work was done for the symmetry groups $G = S \ltimes G^d$ with $G^d$ acting diagonally and $S \subseteq S_N$~-- a subgroup of a symmetric group on $N$ elements \cite{BI21,BI22,CJMPW23,EGZ20,EGZ18,I23,MO70,WWP,Y16}.
We relax both conditions in this paper.

Once again, the mirror symmetry attempts to match A- and B-state spaces for {\itshape dual} pairs of Landau--Ginzburg orbifolds. It should be mentioned, that the state space enjoys several other structures besides being just bigraded vector space, like multiplication or bilinear form on both A- and B-sides, which should also be compatible with the mirror map. They will not be considered in the present paper.

\subsection{Calabi--Yau/Landau--Ginzburg correspondence} The final piece of matching comes from Calabi--Yau/Landau--Ginzburg correspondence, which relates respective A-models and their state spaces coming from Calabi--Yau and Landau--Ginzburg geometries of $(X_f,G/J)$ and $(f,G)$ provided $J\subseteq G$.
Mathematically, up till now this was proved for diagonal symmetry groups by Chiodo and Ruan \cite[Theorem 14]{CR11} and in some special cases with $N=5$ in~\cite{Muk}.

However, if this correspondence holds, the vector space~$\B(f,G)$ is also expected to form a~diamond. Namely, it should satisfy the properties (1)--(6) above. Formulated in terms of~$\B(f,G)$ this becomes a purely singularity theoretic question.
It is the main topic of our paper.

\begin{Theorem}\label{theorem: main}
	Let $f \in \CC[x_1,\dots,x_N]$ be a quasihomogeneous polynomial satisfying Calabi--Yau condition and defining an isolated singularity.
	Then for any $G \subseteq \SL_f$, such that $J \subseteq G$ the state space $\B(f,G)$ forms a diamond of size $N-2$ in a sense that it satisfies conditions $(1)$--$(6)$ above.\looseness=-1
\end{Theorem}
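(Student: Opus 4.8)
The plan is to fix the standard description of the state space together with its bigrading, reduce all six properties to two elementary structural inputs, and then isolate the one genuinely delicate point. Recall that
\[
\B(f,G)=\bigoplus_{g\in G}\bigl(\Jac(f_g)\,\omega_g\bigr)^{G},
\]
where $f_g:=f|_{\Fix(g)}$, $\omega_g$ is the volume form on $\Fix(g)$, and $G$ acts by conjugation, so the sum is over conjugacy classes with centralizer invariants on each sector. Writing $q_i:=d_i/d_0$, $I_g:=\{i: g\text{ fixes }x_i\}$, $N_g:=|I_g|$, $\mu_g:=\sum_{i\in I_g}q_i$, and $\deg m:=\sum_{i\in I_g}a_iq_i$ for a monomial $m=\prod_{i\in I_g}x_i^{a_i}\in\Jac(f_g)$, the B-model bidegree is
\[
p(m,g)=\age(g)+N_g-\mu_g-\deg m-1,\qquad q(m,g)=\age(g)+\mu_g+\deg m-1 .
\]
Property $(1)$ is then the definition. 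I would lean on two facts: (i) $\Jac(f_g)$ is a graded Gorenstein (Milnor) algebra whose socle is the class of $\hess f_g$ in degree $\hat d_g=N_g-2\mu_g$, so its graded pieces in degrees $d$ and $\hat d_g-d$ are canonically dual; and (ii) $\Fix(g)=\Fix(g^{-1})$ (hence $f_g=f_{g^{-1}}$, $\mu_{g^{-1}}=\mu_g$, $N_{g^{-1}}=N_g$) while $\age(g)+\age(g^{-1})=N-N_g$.

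For property $(2)$ I would show every nonzero class has $0\le p,q\le D:=N-2$. As $\deg m$ ranges over $[0,\hat d_g]$, the extreme values of $q$ are $q_{\min}=\age(g)+\mu_g-1$ (at $m=1$) and $q_{\max}=\age(g)+N_g-\mu_g-1$ (at $m=\hess f_g$); the two bounds for $p$ are identical after the substitution $g\mapsto g^{-1}$ via (ii). Thus all of $(2)$ reduces to the single inequality $\age(g)+\mu_g\ge 1$, equivalently $\age(g)\ge\sum_{i\notin I_g}q_i$. This is where the hypotheses enter: for $g\in\SL(N,\CC)$ one has $\det g={\rm e}^{2\pi\sqrt{-1}\,\age(g)}=1$, so $\age(g)\in\ZZ_{\ge0}$ and hence $\age(g)\ge1$ whenever $g\neq\id$, while the Calabi--Yau condition $\sum_iq_i=1$ forces $\sum_{i\notin I_g}q_i\le1$. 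The inequality follows, with equality only for $g=\id$ or for fixed-point-free $g$ of age $1$, and the case $g=\id$ is trivial.

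The horizontal and vertical isomorphisms come directly from (i) and (ii). Fact (i) gives, within each sector, a degree-reversing duality $m\mapsto m^{*}$ sending degree $d$ to degree $\hat d_g-d$; a direct substitution shows $\bigl(p(m^*,g),q(m^*,g)\bigr)=\bigl(q(m,g),p(m,g)\bigr)$, and since the Milnor pairing is $C(g)$-equivariant (the $\SL$ condition trivializes the twist on $\omega_g$) it descends to the invariants, giving $\B^{p,q}\cong\B^{q,p}$, which is $(5)$. Fact (ii) gives a degree-preserving bijection between the $g$- and $g^{-1}$-sectors; substituting $\age(g^{-1})=N-N_g-\age(g)$ shows it sends $(p,q)$ to $(D-q,D-p)$, and as $C(g)=C(g^{-1})$ it again descends to invariants, yielding $(6)$. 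For property $(4)$, the untwisted sector $g=\id$ (with $N_{\id}=N$, $\mu_{\id}=1$, $\age=0$) contributes its unit $1$ at bidegree $(D,0)$ and its socle $\hess f$ at $(0,D)$, both one-dimensional and $G$-invariant (the latter because $\det g=1$). A short check using $p+q=2\age(g)+N_g-2$ shows that for $N\ge3$ no other sector can reach $(D,0)$ or $(0,D)$, so these corners are exactly one-dimensional.

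Finally, property $(3)$. By $(6)$ it suffices to prove $\dim\B^{0,0}=1$. The relation $p+q=2\age(g)+N_g-2$ forces a class in bidegree $(0,0)$ to have $N_g=0$ and $\age(g)=1$; that is, it is the unit $\omega_g$ of a fixed-point-free sector of age $1$. The class $\omega_{j_f}$ is one such (since $j_f\in G$ by hypothesis, $\age(j_f)=\sum_iq_i=1$, and $j_f$ is fixed-point-free), and it is $G$-invariant because $j_f$ is central in $\GL_f$; hence $\dim\B^{0,0}\ge1$, and equality is equivalent to the assertion that $j_f$ is the \emph{only} element of $G$ with $\Fix(g)=\{0\}$ and $\age(g)=1$. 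I expect this uniqueness statement to be the main obstacle. Unlike the rest it is not formal: it is precisely where the isolated-singularity hypothesis must be used (through the monomial structure of $f$, forcing each variable to enter $f$ in a controlled way), and the possibility of non-diagonal $g$ when $G$ is non-abelian makes even the reduction to the angle constraints $\sum_i\Theta_i(g)=1$, $\Theta_i(g)\in(0,1)$, $\sum_ic_i\Theta_i(g)\in\ZZ$ for every monomial $x^{\mathbf c}$ of $f$ require care. Granting it, $\B^{0,0}=\CC\cdot\omega_{j_f}$, which completes $(3)$ and the theorem.
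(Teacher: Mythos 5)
Your overall architecture is the paper's own, up to a harmless rotation of conventions: your bidegree $(p,q)$ equals $\bigl(N-2-q_r,\,q_l\bigr)$ in the paper's left/right charges, so your diamond is the paper's reflected in one axis. This swaps which pair of corners is property~(3) versus~(4) --- in the paper's B-model grading $[1]\xi_\id$ sits at $(0,0)$, $[\hess(f)]\xi_\id$ at $(N-2,N-2)$, and $[1]\xi_{j_f}$ at $(0,N-2)$ --- but since the package (1)--(6) is invariant under this relabeling, nothing is lost. Your individual reductions are exactly the paper's: integrality of the charges from $j_f\in G\subseteq\SL_f$; the bound $0\le p,q\le N-2$ from $\age(g)\ge 1\ge \sum_{k\in I_g^c}q_k$ for $g\ne\id$ (Proposition~\ref{proposition: diamond under CY}); the two symmetries from the residue-pairing duality within each sector and the sector swap $g\mapsto g^{-1}$, descending to invariants because $\det(h)=1$ kills the twist on the generators (the maps $\Phi$, $\Psi$ of Proposition~\ref{proposition: Serre and Hodge under CY}); and the corner analysis via $N=\age(g)+\age\bigl(g^{-1}\bigr)+N_g$.

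However, there is a genuine gap, precisely where you write ``Granting it'': the uniqueness of $j_f$ among elements of $G$ with $N_g=0$ and $\age(g)=1$ is not a residual verification but the main technical content of the paper, namely Proposition~\ref{prop: age greater than weights}: under the CY condition every diagonalizable $g\in\GL_f$ with $N_g=0$ satisfies $\age(g)\ge\sum_{k=1}^N q_k$, with equality only for $g=j_f$. The paper's proof of this requires four nontrivial ingredients you do not supply: (a) the lemma that every element of $\GL_f$ preserves the weights, which lets one rewrite $f$ in coordinates diagonalizing $g$ (finite order, hence diagonalizable) while $j_f$ keeps the same matrix, reducing the non-diagonal, nonabelian case to diagonal symmetries of the rewritten polynomial; (b) the transposed polynomial $f^{\mathsf{T}}$ built from the graph exponents matrix $E_f^{\mathsf{T}}$, together with the identity $\age(g)=\sum_k s_k q_k^{\mathsf{T}}$ for $\overline g=E_f^{-1}s$ with all $s_k\ge 1$ when $N_g=0$; (c) the positivity $q_k^{\mathsf{T}}>0$ for all $k$ whenever the CY polynomial is not star-shaped, proved by an explicit estimate involving the inverse of the loop block of $E_f$; and (d) a separate argument for star-shaped $f$, where $q_1^{\mathsf{T}}=0$ and termwise comparison fails, using an extra monomial $x_2^{r_2}\cdots x_N^{r_N}$ whose existence is forced by the isolated-singularity hypothesis. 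That no soft argument can close this is shown by the paper's own counterexample: for the non-CY polynomial $f=x_1^{10}+x_1\bigl(x_2^2+x_3^2+x_4^2\bigr)$ there is a fixed-point-free $g$ with $\age(g)<\sum q_k$. Until (a)--(d) are established, your property~(3) (the paper's~(4), Proposition~\ref{proposition: hd0}) is unproven, while the remainder of your proposal is sound.
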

\begin{proof}
	The proof is summed up in Propositions~\ref{proposition: diamond under CY}, \ref{proposition: Serre and Hodge under CY} and \ref{proposition: hd0}.
	The vertical and horizontal isomorphism are given in Section~\ref{section: total space}.
\end{proof}

\section{Preliminaries and notations}

\subsection{Quasihomogeneous singularities}

The polynomial $f \in \CC[x_1,\dots,x_N]$ is called \textit{quasihomogeneous} if there are positive integers $d_0,d_1,\dots,d_N$, such that
\begin{equation}\label{QH1}
	f\bigl(\lambda^{d_1}x_1,\dots,\lambda^{d_N}x_N\bigr) = \lambda^{d_0} f(x_1,\dots,x_N), \qquad \forall \lambda \in \CC.
	%\tag{QH1}
\end{equation}
In what follows we will say that $f$ is quasihomogeneous with respect to the \textit{weights} $d_0,d_1,\dots,d_N$ or the \textit{reduced weights} $q_1 := d_1/d_0,\dots,q_N:= d_N/d_0$.

We will say that $f$ defines an \textit{isolated singularity} at $0 \in \CC^N$ if $0$ is the only critical point of~$f$.
According to K.~Saito \cite[Satz~1.3]{S71} one may consider without changing the singularity only the quasihomogeneous polynomials, such that $0 < q_k \le 1/2$ for all $k=1,\dots,N$. Moreover, we may assume that all variable $x_k$ with $q_k=1/2$ enter $f$ only in monomial $x_k^2$, in particular, there are no monomials of the form $x_ix_j$ with $i\ne j$. Then the number of its monomials is not less than~$N$, the number of variables.

\begin{Example}
 Fermat, chain and loop type polynomials are examples of quasihomogeneous singularities for any natural $a_i \ge 2$
 \begin{align*}
	 f &= x_1^{a_1} \qquad \text{Fermat type},
	 \\
	 f &= x_1^{a_1} + x_1 x_2^{a_2} + \dots + x_{N-1} x_N^{a_N} \qquad \text{chain type},
	 \\
	 f &= x_1^{a_1}x_2 + x_2^{a_2}x_3 + \dots + x_{N-1}^{a_{N-1}}x_N + x_N^{a_N}x_1 \qquad \text{loop type}.
 \end{align*}
 Using the word `type', we mean the certain structure of the monomials without specifying the exponents $a_i$.
\end{Example}

It's easy to see that for Fermat, chain and loop type polynomials, the reduced weights $q_1,\dots,q_N$ are defined in a unique way. This is also true for any quasihomogeneous singularity that we assume (cf.\ \cite[Korollar~1.7]{S71}). We have
\begin{align*}
\begin{split}
 & q_1 = \frac{1}{a_1} \qquad \text{Fermat type,}
 \\
 & q_i = \sum_{j=i}^N\frac{(-1)^{j-i}}{a_1\cdots a_j} \qquad \text{chain type,}
 \\
 & q_i = (-1)^{N-1}\frac{1 - a_i + a_i \sum_{k=2}^{N-1} (-1)^k \prod_{l=2}^k a_{i-l+1}}{\prod_{k=1}^N a_k - (-1)^N} \qquad \text{loop type,}
 \end{split}
\end{align*}
where one assumes $a_0 := a_N$, $a_{-1} := a_{N-1}$, $a_{-2} := a_{N-2}$ and so on.

Given $f \in \CC[x_1,\dots,x_N]$ and $g \in \CC[y_1,\dots,y_M]$ both defining quasihomogeneous singularities it follows immediately that $f+g \in \CC[x_1,\dots,x_N,y_1,\dots,y_M]$ defines a quasihomogeneous singularity too. We will denote such a sum by $f \oplus g$.

\begin{Example}[{\cite[Section 13.1]{AGV85}}]
If $N \le 2$, then all quasihomogeneous isolated singularities are given by the $\oplus$-sums of the Fermat, chain or loop type polynomials.
\end{Example}

\begin{Example}[{\cite[Section 13.2]{AGV85}}]\label{ex: N=3 quasihomogeneous}
If $N = 3$, then all quasihomogeneous isolated singularities are given by the following polynomials $f_{\rm I} = x_1^{a_1} + x_2^{a_2}+ x_3^{a_3}$, $f_{\rm II} = x_1^{a_1} + x_2^{a_2}x_3 + x_3^{a_3}$, $f_{\rm III} = x_1^{a_1} + x_2^{a_2}x_1+ x_3^{a_3}x_1 + \eps x_2^{p}x_3^q$, $f_{\rm IV} = x_1^{a_1} + x_2^{a_2}x_3 + x_3^{a_3}x_2$,
	$f_{\rm V} = x_1^{a_1} + x_2^{a_2}x_1 + x_3^{a_3}x_2$, $f_{\rm VI} = x_1^{a_1}x_2 + x_2^{a_2}x_1 + x_3^{a_3}x_1 + \eps x_2^p x_3^q$, $f_{\rm VII} = x_1^{a_1}x_2 + x_2^{a_2}x_3 + x_3^{a_3}x_1$ with some positive $a_1$, $a_2$, $a_3$. The numbers $a_i$ are arbitrary for $f_{\rm I}$, $f_{\rm II}$, $f_{\rm IV}$, $f_{\rm V}$, $f_{\rm VII}$, however the polynomials $f_{\rm III}$ and $f_{\rm VI}$ are only quasihomogeneous if $\eps \neq 0$ and some additional combinatorial condition on $a_1$, $a_2$, $a_3$ holds. In particular the least common divisor of $(a_2,a_3)$ should be divisible by $a_1-1$ for $f_{\rm III}$ to exist. Allowed values of $\eps$ depend quiet heavily on $a_i$. In particular $\eps \not\in\lbrace 0,\sqrt{-1},-\sqrt{-1}\rbrace $ for $f = x_1^3 + x_1x_2^2 + x_1x_3^2 + \eps x_1x_2^2$ to define an isolated singularity.
\end{Example}

\subsection{Graph of a quasihomogeneous singularity}
Let $f \in \CC[x_1,\dots,x_N]$ define an isolated singularity. Then for every index $j \le N$ the polynomial $f$ has either the summand $x_j^{a}$ or a summand $x_j^ax_k$ for some exponent $a \ge 2$ and index $k \le N$ (cf.\ \cite[Korollar~1.6]{S71} and \cite[Theorem~2.2]{HK}). Construct a map $\kappa\colon \lbrace 1,\dots,N\rbrace \to \lbrace 1,\dots,N\rbrace$. Set $\kappa(j):=j$ in the first case above and $\kappa(j) := k$ in the second.

Following \cite[Section 3]{HK} associate to $f$ the graph\footnote{Such graphs were first considered by Arnold, however with the self-pointing arrows $j \to j$ too. We decide to remove such arrows to reduce complexity.}
$\Gamma_f$ with $N$ vertices labelled with the numbers $1,\dots,N$ and the oriented arrows $j \to \kappa(j)$ if $j \neq \kappa(j)$. In other words, the vertices correspond to the variables $x_i$ and the arrows to the monomials $x_j^ax_k$.

\begin{Example}
The graphs of the $N=3$ quasihomogeneous singularities are all listed in Figure~\ref{fig: N=3 graphs}.
\end{Example}

\begin{figure}[t]
	\centering
\adjustbox{scale=0.8,center}{
 %\[
 \begin{tikzcd}[column sep=small]
	& \bullet{1} &&&& \bullet{1} &&&& \bullet{2} &&&& \bullet{2} \\
	\\
	\bullet{2} && \bullet{3} && \bullet{2} && \bullet{3} && \bullet{1} && \bullet{3} && \bullet{1} && \bullet{3} \\
	\\
	& \bullet{3} &&&& \bullet{3} &&&& \bullet{1} \\
	\\
	\bullet{1} && \bullet{2} && \bullet{1} && \bullet{2} && \bullet{2} && \bullet{3}
	\arrow[from=1-6, to=3-7]
	\arrow[from=3-11, to=3-9]
	\arrow[from=1-10, to=3-9]
	\arrow[curve={height=6pt}, from=1-14, to=3-15]
	\arrow[curve={height=6pt}, from=3-15, to=1-14]
	\arrow[from=5-2, to=7-3]
	\arrow[from=5-6, to=7-5]
	\arrow[curve={height=6pt}, from=7-5, to=7-7]
	\arrow[curve={height=6pt}, from=7-7, to=7-5]
	\arrow[from=7-3, to=7-1]
	\arrow[from=5-10, to=7-9]
	\arrow[from=7-9, to=7-11]
	\arrow[from=7-11, to=5-10]
	\arrow[draw=none, from=3-1, to=3-3]
	\arrow[draw=none, from=3-5, to=3-7]
	\arrow[draw=none, from=3-13, to=3-15]
	\arrow[shift right=3, draw=none, from=7-5, to=7-7]
\end{tikzcd}
%\]
}
 \caption{Graphs of $N=3$ quasihomogenous singularities (see Example~\ref{ex: N=3 quasihomogeneous}).}
 \label{fig: N=3 graphs}
\end{figure}
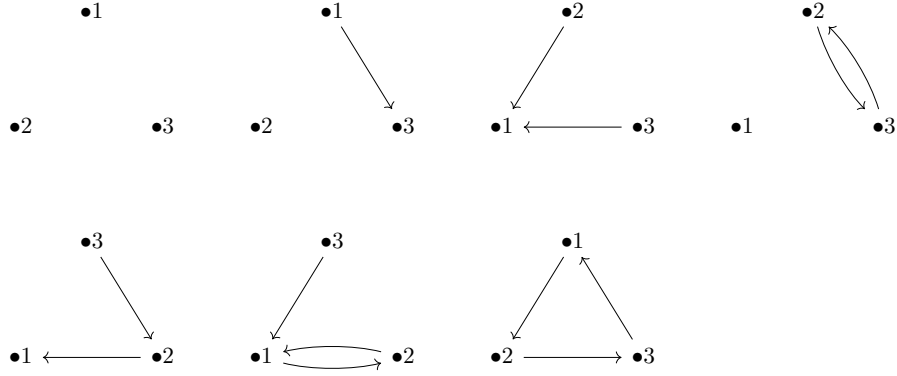

Call a tree oriented if its root has only incoming edges adjacent to it and any other vertex has exactly one outgoing edge and several incoming edges adjacent to it.
The following proposition is immediate.~
\begin{Proposition}[{cf.\ \cite[Lemma 3.1]{HK}}]\label{prop: graph of f}
	Any graph $\Gamma_f$ is a disjoint union of the graphs of the following two types
	\begin{enumerate}\itemsep=0pt
	 \item[$(1)$] oriented tree,
	 \item[$(2)$] oriented circle with the oriented trees having the roots on this oriented circle.
	\end{enumerate}
\end{Proposition}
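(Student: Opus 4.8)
The plan is to recognize $\Gamma_f$, with the deleted self-pointing arrows $j\to j$ reinstated, as the \emph{functional graph} of the map $\kappa$: since $\kappa$ is a genuine function $\{1,\dots,N\}\to\{1,\dots,N\}$, every vertex emits exactly one such arrow, so in $\Gamma_f$ itself every vertex has out-degree $1$ when $\kappa(j)\ne j$ and out-degree $0$ when $\kappa(j)=j$ (the latter being exactly the variables contributing a Fermat-like summand $x_j^{a}$). The assertion is then the standard decomposition of a functional graph, read off after removing self-loops, and it suffices to prove it for a single weakly connected component $H$ of $\Gamma_f$, say with $m$ vertices.

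First I would bound the number of edges of $H$. Because each of its $m$ vertices has out-degree at most $1$ there are at most $m$ edges, and because $H$ is connected its underlying undirected graph has at least $m-1$ edges; hence its first Betti number is $0$ or $1$, i.e.\ $H$ contains \emph{at most one} cycle. These two possibilities correspond precisely to whether $\kappa$ has a fixed point in $H$: the sum of out-degrees equals the number of edges and every out-degree is $0$ or $1$, so $m-1$ edges forces exactly one vertex of out-degree $0$ (a fixed point $\kappa(r)=r$), while $m$ edges forces every vertex to have out-degree exactly $1$ (no fixed point).

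In the first case $H$ is a tree with a unique vertex $r$ of out-degree $0$. Starting from any vertex and following the unique outgoing arrows produces a forward path $j,\kappa(j),\kappa^2(j),\dots$ that cannot repeat a vertex, since a repetition would create a cycle in the tree; being simple it must terminate at the only sink $r$, so every arrow points toward $r$ and $H$ is an oriented tree of type $(1)$. In the second case, following arrows from any vertex gives an infinite forward orbit $v,\kappa(v),\kappa^2(v),\dots$ which, by finiteness, is eventually periodic, and its periodic part is a \emph{directed} cycle; as $H$ has a unique undirected cycle this directed cycle is it, an oriented circle $C$, and the same computation shows every vertex flows into $C$. Deleting the $\ell=|C|$ edges of $C$ leaves an acyclic graph on $m$ vertices with $m-\ell$ edges, i.e.\ a forest with $\ell$ components whose only sinks are the $\ell$ cycle vertices (each cycle vertex loses its unique out-edge while off-cycle vertices keep theirs); the forward-path argument once more shows each component is an oriented tree whose root lies on $C$, which is exactly type $(2)$.

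The argument is genuinely routine — which is why the statement is called immediate — and the only points demanding care are the orientation claims: that the single undirected cycle delivered by the Betti-number count is actually traversed coherently by the arrows (handled by realizing it as the periodic part of a forward orbit), and that after excising it the remaining forest orients \emph{toward} the cycle rather than away from it. Both rest on the one structural fact that out-degree is everywhere $\le 1$, together with the bookkeeping observation that deleting self-loops is precisely what converts a fixed-point component into a genuine rooted tree of type $(1)$ rather than a degenerate circle of length one.
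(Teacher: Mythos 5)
Your proof is correct. Note that the paper itself offers no argument here: it declares the proposition ``immediate'' and points to \cite[Lemma~3.1]{HK}, so you are not diverging from the paper so much as supplying the standard functional-graph decomposition that the authors (and Hertling--Kurbel) have in mind. Your two cases are exactly right: the edge count via out-degrees shows a connected component has either $m-1$ edges (one fixed point of $\kappa$, hence a tree oriented toward its unique sink, type (1)) or $m$ edges (no fixed point, a unique cycle which is the periodic part of any forward orbit, hence coherently oriented, with the excised forest flowing into it, type (2)); and your closing remark about reinstated self-loops matches the paper's later convention of viewing the root of a type-(1) tree as a cycle with one vertex. One small point of bookkeeping deserves a word: when $\kappa(j)=k$ and $\kappa(k)=j$ (a $2$-cycle, which genuinely occurs, e.g., for the loop factor of $f_{\rm IV}$ in Figure~\ref{fig: N=3 graphs}), the two directed edges $j\to k$ and $k\to j$ must be counted as two parallel edges in the underlying undirected \emph{multigraph} for your identity ``number of edges $=$ sum of out-degrees'' and the Betti-number count to be consistent; if one collapses parallel edges, a $2$-cycle component would appear to have $m-1=1$ edge and Betti number $0$ despite having no sink. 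With the multigraph convention your argument goes through verbatim, so this is a matter of stating the convention rather than a gap.
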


In what follows we consider the root of the type (1) graph above as a cycle with one vertex. This merges the two types above.

\begin{figure}[t]	\centering
 \[\begin{tikzcd}[sep=tiny]
	&&&&&&&& \bullet & \textcolor{white}{\bullet} & \bullet \\
	&&&&&&&& \bullet && \bullet \\
	&&& \bullet \\
	&& \bullet & \textcolor{white}{\bullet} & \bullet &&&&&&&&&& \bullet & \textcolor{white}{\bullet} \\
	&&& \bullet &&&&& \bullet && \bullet &&&&& \bullet \\
	&&&&&& \bullet && \bullet && \bullet && \bullet \\
	&&&&& \bullet && \bullet && \bullet && \bullet && \bullet \\
	&&&&&& \bullet & \bullet &&&& \bullet & \bullet &&&& \bullet \\
	\bullet & \bullet &&& \bullet & \bullet &&&&&&&&&&& \bullet \\
	\textcolor{white}{\bullet} &&&&&& \bullet &&&&&& \bullet &&&& \textcolor{white}{\bullet} \\
	\bullet & \bullet &&& \bullet & \bullet &&&&&&&&&&& \bullet \\
	&&&&&& \bullet & \bullet &&&&&&&&& \bullet \\
	&&&&& \bullet && \bullet && \bullet \\
	&&&&&& \bullet \\
	&&& \bullet \\
	&& \bullet && \bullet \\
	&& \textcolor{white}{\bullet} & \bullet &&&& \bullet & \bullet & \textcolor{white}{\bullet} & \bullet & \bullet \\
	\\
	&&&&&&&&&& {}
	\arrow[curve={height=6pt}, from=8-12, to=7-10]
	\arrow[curve={height=6pt}, from=7-10, to=8-8]
	\arrow[curve={height=6pt}, from=8-8, to=10-7]
	\arrow[curve={height=6pt}, from=10-7, to=12-8]
	\arrow[curve={height=6pt}, from=12-8, to=13-10]
	\arrow[curve={height=18pt}, dashed, from=13-10, to=10-13]
	\arrow[curve={height=6pt}, from=10-13, to=8-12]
	\arrow[dashed, from=4-16, to=8-12]
	\arrow[from=6-13, to=7-12]
	\arrow[from=7-12, to=8-12]
	\arrow[from=8-13, to=8-12]
	\arrow[from=7-14, to=8-13]
	\arrow[dashed, from=4-15, to=6-13]
	\arrow[dashed, from=5-16, to=7-14]
	\arrow[from=6-9, to=7-10]
	\arrow[from=6-11, to=7-10]
	\arrow[dashed, from=4-4, to=8-8]
	\arrow[from=8-7, to=8-8]
	\arrow[from=7-6, to=8-7]
	\arrow[from=4-3, to=5-4]
	\arrow[from=3-4, to=4-5]
	\arrow[dashed, from=4-5, to=6-7]
	\arrow[from=6-7, to=7-8]
	\arrow[from=7-8, to=8-8]
	\arrow[from=1-9, to=2-9]
	\arrow[from=1-11, to=2-11]
	\arrow[dashed, from=1-10, to=7-10]
	\arrow[dashed, from=2-11, to=5-11]
	\arrow[from=5-11, to=6-11]
	\arrow[dashed, from=2-9, to=5-9]
	\arrow[from=5-9, to=6-9]
	\arrow[dashed, from=5-4, to=7-6]
	\arrow[dashed, from=11-2, to=11-5]
	\arrow[from=11-1, to=11-2]
	\arrow[from=9-1, to=9-2]
	\arrow[dashed, from=9-2, to=9-5]
	\arrow[from=9-5, to=9-6]
	\arrow[from=9-6, to=10-7]
	\arrow[from=11-6, to=10-7]
	\arrow[from=11-5, to=11-6]
	\arrow[dashed, from=10-1, to=10-7]
	\arrow[from=12-7, to=12-8]
	\arrow[from=13-8, to=12-8]
	\arrow[from=13-6, to=12-7]
	\arrow[from=14-7, to=13-8]
	\arrow[dashed, from=16-5, to=14-7]
	\arrow[dashed, from=15-4, to=13-6]
	\arrow[from=17-4, to=16-5]
	\arrow[from=16-3, to=15-4]
	\arrow[dashed, from=17-3, to=12-8]
	\arrow[dashed, from=17-8, to=13-10]
	\arrow[dashed, from=17-9, to=13-10]
	\arrow[dashed, from=17-10, to=13-10]
	\arrow[dashed, from=17-11, to=13-10]
	\arrow[dashed, from=17-12, to=13-10]
	\arrow[dashed, from=8-17, to=10-13]
	\arrow[dashed, from=9-17, to=10-13]
	\arrow[dashed, from=11-17, to=10-13]
	\arrow[dashed, from=12-17, to=10-13]
	\arrow[dashed, from=10-17, to=10-13]
\end{tikzcd}\]
 \caption{Connected component of $\Gamma_f$ (see Proposition~\ref{prop: graph of f}).} \label{fig: graph of f}
\end{figure}
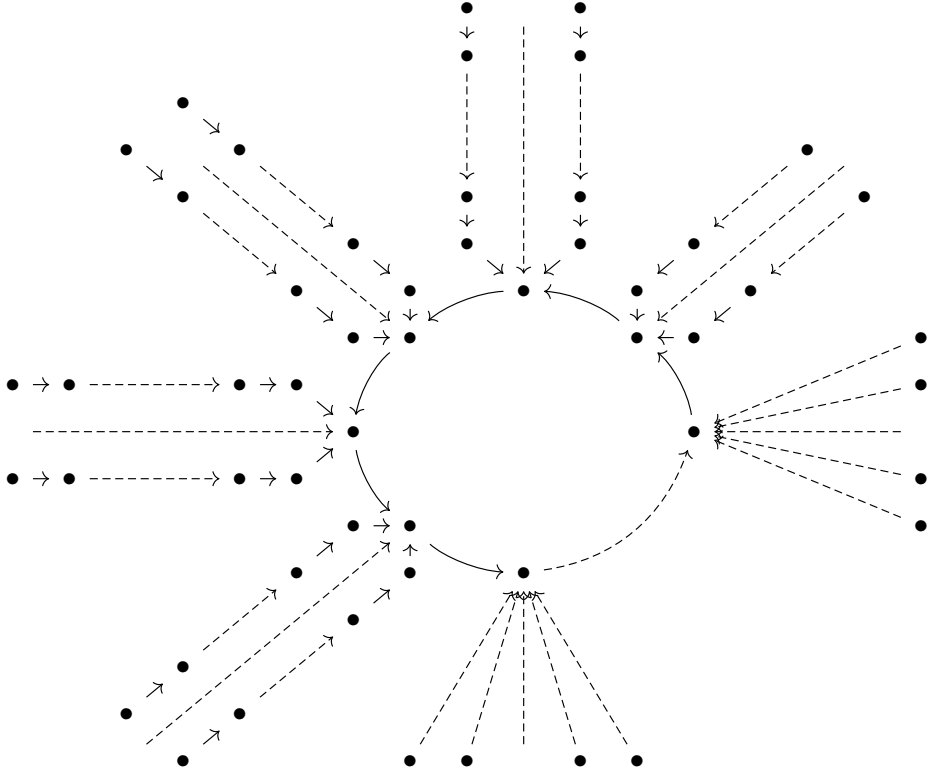

It's easy to see that $\Gamma_{f \oplus g} = \Gamma_f \sqcup \Gamma_g$, but it is not true that $f$ decomposes into the $\oplus$-sum if~$\Gamma_f$ has more than one component. For example the graph of $f = x_1^3 + x_2^3 + x_3^3 + x_1x_2x_3$ is just the disjoint union of three vertices without any edges.

\subsection{Graph decomposition of a polynomial}\label{section: graph decomposition}
Assume we only know the graph $\Gamma_f$ and not the polynomial $f$ itself. The graph structure indicates some monomials that are the summands of $f$.
Call these monomials \textit{graph monomials}. In particular, $f$ has only graph monomials if it is of Fermat, chain or loop type or a $\oplus$-sum of them.

Let $f$ be such that $\Gamma_f$ has only one connected component. Then $\Gamma_f$ has one oriented circle, and $p$ oriented trees with the roots on this circle.
We have the decomposition
\begin{equation}\label{eq: f decomposition}
	f = f_0 + f_1 + \dots + f_p + f_{\rm add},
\end{equation}
with
\begin{enumerate}\itemsep=0pt
 \item[(1)] $f_0,f_1,\dots,f_p, f_{\rm add} \in \CC[x_1,\dots,x_N]$,
 \item[(2)] $f_0$ having as the summands only those graph monomials of $f$, that build up the oriented circle or the common root,
 \item[(3)] $f_k$ having as the summands only those graph monomials of $f$, that build up the $k$-th oriented tree,
 \item[(4)] $f_{\rm add} := f - f_0 - f_1 -\dots - f_p$ having as the summands all the non-graph monomials of $f$.
\end{enumerate}
This decomposition extends easily to the case of $\Gamma_f$ having several components. Note that we could have had $p=0$, but it should always hold that $f_0 \neq 0$.

Writing the decomposition above we had to order the trees by the index of $f_i$. This ordering is not important in what follows.

\begin{Example}
	The polynomial $f = x_1^3 + x_1 \left(x_2^2+x_3^2+x_4^2\right) + \epsilon x_2 x_3 x_4 $ with $\eps \in \CC \backslash \left\lbrace 0, \pm 2 \sqrt{-1}\right\rbrace$ defines an isolated singularity. It's also quasihomogeneous with $q_1=\dots=q_4=1/3$.
	We have $p = 3$,
	\[
		f_0 = x_1^3, \qquad f_1 = x_1 x_2^2, \qquad f_2 = x_1 x_3^2, \qquad f_3 = x_1 x_4^2, \qquad f_{\rm add} = \eps x_2x_3x_4.
	\]
\end{Example}

\subsection{Graph exponents matrix}
Let $f$ define a quasihomogeneous singularity. We introduce the matrix $E_f$ with the entries in~$\ZZ_{\ge 0}$. It follows from Proposition~\ref{prop: graph of f} that $f$ has exactly $N$ graph monomials. Let every row of $E_f$ correspond to a graph monomial. The components of this row will be $(\alpha_1,\dots,\alpha_N)$ if and only if the corresponding graph monomial is $\eps \cdot x_1^{\alpha_1} \cdots x_N^{\alpha_N}$ for some $\eps \in \CC^\ast$.

The matrix $E_f$ is only defined up to a permutation of the rows. We will call it \textit{graph exponents matrix}.

Let $E_{ij}$ denote the components of $E_f$. Then for some non-zero constants $c_k$ we have
\begin{equation}\label{eq: poly from graph exponents matrix}
 	f - f_{\rm add} = \sum_{k=1}^N c_k x_1^{E_{k1}} \cdots x_N^{E_{kN}}.
\end{equation}

\begin{Remark}
	Such a matrix was previously defined in the literature only for the invertible polynomials (see Section~\ref{sec: invertible polynomials}). We consider it here in a wider context.
\end{Remark}

\begin{Example}
	The matrices $E_f$ of the Example~\ref{ex: N=3 quasihomogeneous} are
	\begin{alignat*}{4}
		& E_{f_{\rm I}} = \begin{pmatrix}
					a_1 & 0 & 0 \\ 0 & a_2 & 0 \\ 0 & 0 & a_3
		 \end{pmatrix},
		\qquad&&
		E_{f_{\rm II}} = \begin{pmatrix}
			a_1 & 0 & 0 \\ 0 & a_2 & 1 \\ 0 & 0 & a_3
			\end{pmatrix},
		\qquad&&
		E_{f_{\rm III}} = \begin{pmatrix}
			a_1 & 0 & 0 \\ 1 & a_2 & 0 \\ 1 & 0 & a_3
			\end{pmatrix},&
\\&		E_{f_{\rm IV}} = \begin{pmatrix}
			a_1 & 0 & 0 \\ 0 & a_2 & 1 \\ 1 & 0 & a_3
			\end{pmatrix},
		\qquad &&E_{f_{\rm V}} = \begin{pmatrix}
			a_1 & 0 & 0 \\ 1 & a_2 & 0 \\ 0 & 1 & a_3
			\end{pmatrix},
		\qquad&&
		E_{f_{\rm VI}} = \begin{pmatrix}
			a_1 & 1 & 0 \\ 1 & a_2 & 0 \\ 1 & 0 & a_3
			\end{pmatrix},&
		\\&
		E_{f_{\rm VII}} = \begin{pmatrix}
			a_1 & 1 & 0 \\ 0 & a_2 & 1 \\ 1 & 0 & a_3
			\end{pmatrix}.&&&&&
	\end{alignat*}
\end{Example}

The graph exponents matrices of loop and chain type polynomials read
\begin{equation}\label{eq: chain and loop matrices}
	E_{\rm loop} =
	\left(
		\begin{matrix}
		 a_1 & 1 & \dots & 0 & 0
		 \\
		 0 & a_2 & 1 & \dots & 0
		 \\
		 \vdots & & \ddots & \ddots & 0
		 \\
		 0 & 0 & \dots & a_{N-1} & 1
		 \\
		 1 & 0 & 0 & \dots & a_N
		\end{matrix}
	\right),
	\qquad
	E_{\rm chain} =
	\left(
		\begin{matrix}
		 a_1 & 0 & \dots & 0 & 0
		 \\
		 1 & a_2 & 0 & \dots & 0
		 \\
		 \vdots & \ddots & \ddots & \ddots & \vdots
		 \\
		 0 & \dots & 1 & a_{N-1} & 0
		 \\
		 0 & 0 & \dots & 1 & a_N
		\end{matrix}
	\right).
\end{equation}

In general, by Proposition~\ref{prop: graph of f} if $\Gamma_f$ has only one connected component, the matrix $E_f$ after some renumbering of the variables and the rows has the block form. The diagonal blocks are several chain type exponent matrices and exactly one loop type exponents matrix as in equation~\eqref{eq: chain and loop matrices}, such that for every chain type block there is exactly one additional matrix entry~$1$ in the first row of this block and the column of a loop type block. All the other matrix entries except listed vanish,
\begin{equation}\label{eq: Ef special form}
	E_f =
	\left(
		\begin{array}{cccc}
			\multicolumn{1}{c|}{A_0} & 0 & 0 & 0
			\\
			\hhline{--|}
			\multicolumn{1}{c|}{U_{i_1j_1}} & \multicolumn{1}{c|}{A_1} & 0 & 0
			\\
			\hhline{--~|}
			\vdots & & \ddots & 0
			\\
			\hhline{-~~-|}
			\multicolumn{1}{c|}{U_{i_pj_p}} & 0 & 0 & \multicolumn{1}{|c|}{A_p}
		\end{array}\,
	\right),
\end{equation}
where $A_0$ is a loop type polynomial exponents matrix and $A_1,\dots,A_p$ are chain type polynomial exponent matrices, the matrix $U_{ij}$ is the rectangular matrix with $1$ at position $(i,j)$ and all other entries $0$.

Assuming the decomposition of equation~\eqref{eq: f decomposition}, the matrix $A_0$ is exactly the exponent matrix of $f_0$ and the matrices $A_1,\dots,A_p$ are defined by $f_1,\dots,f_p$.

\begin{Proposition}\label{prop: graph exponent matrix is invertible}
	Let $f$ define a quasihomogeneous singularity. Then
	\begin{enumerate}\itemsep=0pt
	 \item[$(i)$] the matrix $E_f$ is invertible,
	 \item[$(ii)$] there is a canonical choice of the integer weights $(d_0,d_1,\dots,d_N)$.
	\end{enumerate}
\end{Proposition}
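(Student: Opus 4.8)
The plan is to read both statements off the block normal form~\eqref{eq: Ef special form}. After the renumbering that produces this form, $E_f$ is \emph{block lower triangular} for each connected component of $\Gamma_f$: the diagonal blocks are the single loop-type matrix $A_0$ and the chain-type matrices $A_1,\dots,A_p$ of~\eqref{eq: chain and loop matrices}, while the connecting blocks $U_{i_\ell j_\ell}$ sit strictly below the diagonal and hence do not enter the determinant. For a graph with several components $E_f$ is, after reordering, block diagonal over the components. Thus for a single component
\[
\det E_f = \det A_0 \cdot \prod_{i=1}^{p} \det A_i ,
\]
and in general $\det E_f$ is the product of these per-component determinants. So part $(i)$ reduces to showing that each loop- and chain-type block has nonvanishing determinant.

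First I would compute these two determinants directly from~\eqref{eq: chain and loop matrices}. A chain matrix $A_i$ is lower triangular, so $\det A_i = a_1 \cdots a_{N_i}$ is a product of the (positive) exponents and is therefore positive. For the loop matrix $A_0$, expanding along the first column, whose only nonzero entries are $a_1$ at the top and the wrap-around $1$ in the corner, gives $\det A_0 = a_1 \cdots a_{N_0} - (-1)^{N_0}$, with the convention that a one-vertex loop (a root treated as a cycle with a single vertex) contributes $\det A_0 = a_1$. Since every graph monomial $x_j^a$ or $x_j^a x_k$ has exponent $a \ge 2$, in all cases the product $a_1 \cdots a_{N_0} \ge 2$ exceeds $1$, so $\det A_0 > 0$ for either parity of $N_0$. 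Consequently $\det E_f > 0$, which proves $(i)$ and also fixes the sign needed in $(ii)$.

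For part $(ii)$ I would translate quasihomogeneity into linear algebra. By~\eqref{eq: poly from graph exponents matrix}, demanding that each graph monomial $x_1^{E_{k1}} \cdots x_N^{E_{kN}}$ be quasihomogeneous of one common degree $d_0$ is exactly the system $\sum_j E_{kj} q_j = 1$, i.e.\ $E_f\,\mathbf{q} = \mathbf{1}$, for the vector of reduced weights $\mathbf{q} = (q_1,\dots,q_N)^{\mathsf{T}}$ and the all-ones vector $\mathbf{1}$. By $(i)$ this system has the unique solution $\mathbf{q} = E_f^{-1}\mathbf{1}$, which re-proves the uniqueness of the reduced weights (cf.~\cite{S71}). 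Cramer's rule then produces the \emph{canonical} integer weights: setting $d_0 := \det E_f$ and $d_j := \det E_f^{(j)}$, where $E_f^{(j)}$ is $E_f$ with its $j$-th column replaced by $\mathbf{1}$, we obtain integers (determinants of integer matrices) with $q_j = d_j / d_0$. Positivity of $d_0$ was shown above, and $d_j = q_j d_0 > 0$ follows because the reduced weights satisfy $0 < q_k \le 1/2$.

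I expect the main obstacle to be bookkeeping rather than any conceptual difficulty: one must verify that the reduction to~\eqref{eq: Ef special form} genuinely yields a block-triangular matrix per component, so that the $U_{i_\ell j_\ell}$ drop out, and one must treat the degenerate one-vertex loop uniformly with the genuine cycles. The one place where the hypothesis $a_k \ge 2$ is truly used is in checking that the loop determinant $a_1 \cdots a_{N_0} - (-1)^{N_0}$ stays positive for both parities of $N_0$, and this should be stated explicitly.
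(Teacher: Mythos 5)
Your proof is correct, but it runs in the opposite logical direction from the paper's. The paper first establishes \emph{uniqueness} of the reduced weights combinatorially: in the decomposition \eqref{eq: f decomposition} the weights of the $f_0$-variables are forced (Fermat or loop type), and the remaining weights are then determined by propagating up each tree; since $E_f\,\overline{q}=\boldsymbol{1}$ thus has a unique solution, Cramer's rule yields $\det(E_f)\neq 0$, and the canonical weights are obtained by solving $E_f\,\overline{d}=\det(E_f)\,\boldsymbol{1}$. You instead compute $\det E_f$ directly from the block structure \eqref{eq: Ef special form}: block lower triangularity per component kills the $U$-blocks, chain blocks contribute $a_1\cdots a_{N_i}$, and the loop block contributes $a_1\cdots a_{N_0}-(-1)^{N_0}$, positive since all $a_k\ge 2$; uniqueness of the weights then \emph{follows} from invertibility rather than preceding it. Your route buys something the paper's proof glosses over: positivity of $\det E_f$, and hence positivity of the canonical integer weights $d_j=q_j\,d_0$, which is needed for them to be weights at all (the paper's $d_f:=\det(E_f)$ is only positive by virtue of exactly the computation you carry out; the paper in fact uses your loop-determinant formula later, in the proof concerning the CY condition). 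The paper's route is softer, requiring no determinant evaluation. One bookkeeping point you correctly flag deserves emphasis: as printed, \eqref{eq: Ef special form} places every $U_{i_\ell j_\ell}$ in the block column of $A_0$, i.e., every chain attaches directly to the loop, whereas for a branching tree a chain may attach to another chain; your argument is unaffected because ordering the chain blocks by distance from the cycle still makes $E_f$ block lower triangular, so the connecting $1$'s stay strictly below the diagonal blocks. Your Cramer-style formulas $d_0:=\det E_f$, $d_j:=\det E_f^{(j)}$ are equivalent to the paper's prescription.
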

\begin{proof}
	Let $f$ be quasihomogeneous with the reduced weights $(q_1,\dots,q_N)$. We show first that these weights are defined uniquely by the graph monomials of $f$.

 Let $f$ be decomposed as in equation~\eqref{eq: f decomposition}. Then $f_0$ is of Fermat or loop type and the weights of its variables are defined uniquely. Similarly for any $f_k$ with $k=1,\dots,p$ corresponding to the tree with the root on the oriented circle, the weight of the root's variable is defined by the quasihomogeneity of $f_0$, going up the tree of deduces uniquely the weight of every variable of $f_k$ corresponding to the consequent vertex.

 Introduce two $\ZZ^N$ vectors: $\overline{q} := (q_1,\dots,q_N)^{\mathsf{T}}$ and $\boldsymbol{1} := (1,\dots,1)^{\mathsf{T}}$. Then the quasihomogeneity condition on $f$ is equivalent to the $\QQ^N$ vector equality $E_f \cdot \overline{q} = \boldsymbol{1}$. It follows now from Cramer rule that $\det(E_f) \neq 0$ because this equation has a unique solution. This completes (i).

	The canonical weight set is obtained by taking $d_f := \det(E_f)$ and solving $E_f \cdot \overline{d} = d_f \boldsymbol{1}$ for~$\overline{d} := (d_1,\dots,d_N)^{\mathsf{T}}$.
\end{proof}

\subsection{Invertible polynomials}\label{sec: invertible polynomials}

The set of all quasihomogeneous singularities contains the following important class. The polynomial $f$ defining an isolated quasihomogeneous singularity having no monomial of the form~$x_ix_j$ with $i\ne j$ and as many monomials as the variables is called \textit{invertible polynomial} and is said to define an \textit{invertible singularity}.

The following statement can be assumed as a well-known (cf.~\cite{KS}), we add up the proof for completeness.
\begin{Proposition}\label{prop: invertible polynomial classification}
	Let $f$ be an invertible polynomial. Then after some rescaling and renumbering of the variables we have $f = f^{(1)} \oplus \cdots \oplus f^{(n)}$ for $f^{(k)}$ being either of Fermat, chain or loop type.
\end{Proposition}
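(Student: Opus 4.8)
The plan is to reduce the assertion to a combinatorial statement about the shape of $\Gamma_f$ and then to read off the $\oplus$-decomposition from that shape. Since $f$ is invertible it has exactly $N$ monomials, and since $f$ always has exactly $N$ graph monomials, these must coincide: $f_{\rm add}=0$ and $f=\sum_{j=1}^N c_j m_j$, where $m_j$ denotes the graph monomial whose main variable is $x_j$, namely $m_j=x_j^{a_j}$ if $\kappa(j)=j$ and $m_j=x_j^{a_j}x_{\kappa(j)}$ otherwise, with $a_j\ge 2$. (Because there are $N$ monomials and $N$ variables, assigning to each monomial its main variable is a bijection, so indexing by $j$ is legitimate.) By Proposition~\ref{prop: graph of f} each component of $\Gamma_f$ is an oriented tree or an oriented circle carrying oriented trees. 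As every vertex $j$ has out-degree at most $1$ (the single edge $j\to\kappa(j)$, or none), I would reduce everything to showing that every vertex has in-degree at most $1$; this forces each component to be a directed path or a directed cycle, with no genuine branching.

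The heart of the matter --- and the step I expect to be the main obstacle --- is the claim that no vertex $k$ carries two distinct incoming edges. I would argue by contradiction. Suppose $\kappa(j_1)=\kappa(j_2)=k$ with $j_1,j_2,k$ pairwise distinct, so $f$ contains $c_{j_1}x_{j_1}^{a_{j_1}}x_k$ and $c_{j_2}x_{j_2}^{a_{j_2}}x_k$. Let $p$ be the point with all coordinates zero except $x_{j_1}=s$, $x_{j_2}=t$, where $(s,t)\neq(0,0)$ is chosen so that $c_{j_1}s^{a_{j_1}}+c_{j_2}t^{a_{j_2}}=0$ (take $t=1$ and an $a_{j_1}$-th root $s$ of $-c_{j_2}/c_{j_1}$). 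I claim $p$ is a critical point. For $l\notin\{j_1,j_2\}$ the term of $\partial_l f$ coming from $m_l$ carries the factor $x_l^{a_l-1}$, which vanishes at $p$ since $x_l=0$ and $a_l\ge 2$; the only other terms of $\partial_l f$ are the contributions $c_i x_i^{a_i}$ of edges $i\to l$, and since $\kappa(j_1)=\kappa(j_2)=k\neq l$ none of these has $i\in\{j_1,j_2\}$, so they vanish at $p$ too. For $l\in\{j_1,j_2\}$ the $m_l$-term is a multiple of $x_l^{a_l-1}x_k$ and vanishes because $x_k=0$, while the incoming terms again involve only vanishing variables. Finally $\partial_k f$ evaluates at $p$ to exactly $c_{j_1}s^{a_{j_1}}+c_{j_2}t^{a_{j_2}}=0$. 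Thus $p\neq 0$ is a critical point, contradicting the isolated-singularity hypothesis. This is precisely where invertibility is used: an extra monomial $f_{\rm add}$ (as in $f_{\rm III}$ of Example~\ref{ex: N=3 quasihomogeneous}) could supply further terms in $\partial_k f$ that prevent the construction, which is exactly why branched graphs occur only for non-invertible $f$.

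With both in- and out-degree at most $1$, every component of $\Gamma_f$ is a directed path terminating at a Fermat vertex ($\kappa(r)=r$) or a directed cycle. A path $v_m\to\cdots\to v_1$ with $v_1$ Fermat gives, after renumbering $v_i\mapsto i$, the monomials $x_1^{a_1}+x_2^{a_2}x_1+\cdots+x_m^{a_m}x_{m-1}$, i.e.\ a chain-type polynomial (the single-vertex case being Fermat type), and a cycle gives a loop-type polynomial. Distinct components involve disjoint variables, so $f$ is already the $\oplus$-sum of these pieces up to the coefficients $c_j$. To normalize them I would rescale $x_j\mapsto\lambda_j x_j$, which multiplies $m_j$ by $\prod_l\lambda_l^{E_{jl}}$; choosing $\lambda$ to solve $\prod_l\lambda_l^{E_{jl}}=c_j^{-1}$ for all $j$ is possible because $E_f$ is invertible (Proposition~\ref{prop: graph exponent matrix is invertible}), so this system has a solution over $\CC^\ast$. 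After the rescaling and renumbering we obtain $f=f^{(1)}\oplus\cdots\oplus f^{(n)}$ with each summand of Fermat, chain, or loop type.
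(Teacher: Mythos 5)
Your proof is correct and follows essentially the same route as the paper: the crux in both is ruling out a vertex of $\Gamma_f$ with two incoming arrows by producing a nonzero critical point from the relation $c_{j_1}s^{a_{j_1}}+c_{j_2}t^{a_{j_2}}=0$ (the paper sets $x_i=0$ and notes the critical locus $\alpha_2 x_j^b+\alpha_3 x_k^c=0$), contradicting isolatedness. Your added details --- that invertibility forces $f_{\rm add}=0$, that in- and out-degree at most $1$ make each component a path (chain/Fermat) or cycle (loop), and that the coefficients can be normalized to $1$ because $E_f$ is invertible --- are exactly the steps the paper treats as immediate, so they are a welcome but not divergent elaboration.
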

\begin{proof}
Assume $\Gamma_f$ to contain a vertex with two incoming arrows. Then $f$ is of the form
\[
	\alpha_1 x_i^a x_l^K + \alpha_2 x_j^bx_i + \alpha_3 x_k^cx_i + g(x),
\]
where $K \in \{ 0,1\}$, $\alpha_1\alpha_2\alpha_3 \neq 0$, $b,c \ge 2$ and $g$ does not depend on $x_k$, $x_i$, $x_j$. Computing
\begin{align*}
	\frac{\p f}{\p x_i}  = a \alpha_1 x_i^{a-1} x_l^K + \alpha_2 x_j^b + \alpha_3 x_k^c,
	\qquad
	\frac{\p f}{\p x_j}  = b \alpha_2 x_j^{b-1}x_i,
	\qquad
	\frac{\p f}{\p x_k} = a \alpha_3 x_k^{c-1}x_i.
\end{align*}
Setting $x_i = 0$, we see that vanishing $\frac{\p f}{\p x_i} = \frac{\p f}{\p x_j} =\frac{\p f}{\p x_k} =0$ is equivalent to $\alpha_2 x_j^b + \alpha_3 x_k^c = 0$ what shows that $x_i=x_j=x_k=0$ is not an isolated critical point of $f$.
\end{proof}

The graphs of invertible singularities are disjoint unions of isolated vertices (Fermat types), oriented cycles (loop types) and one branch trees (chain types).

In the notation of Section~\ref{section: graph decomposition}, the graph decomposition equation~\eqref{eq: f decomposition} of Fermat, loop and chain type polynomials is the following. We have always $f_{\rm add} = 0$, $p=0$ and $f_0 = f$ for Fermat and loop types, but $p=1$ and $f_0 + f_1 = f$ for chain type with $f_0 = x_m^{a_m}$.

\begin{Example}
 The quasihomogeneous singularities with $N=2$ are all invertible.
	The quasihomogeneous singularities with $N=3$ are not all invertible. In the notation of Example~\ref{ex: N=3 quasihomogeneous}, we have $f_{\rm I}$~-- Fermat$\oplus$Fermat$\oplus$Fermat, $f_{\rm II}$~-- Fermat$\oplus$chain, $f_{\rm III}$~-- not invertible, $f_{\rm IV} $~-- Fermat$\oplus$loop,
	$f_{\rm V}$~-- chain, $f_{\rm VI}$~-- not invertible, $f_{\rm VII}$~-- loop.
\end{Example}

\section{Symmetries}\label{section: symmetries}
Given a quasihomogeneous polynomial $f = f(x_1,\dots,x_N)$ consider the
\textit{maximal group of linear symmetries} of $f$ defined by
\[
	\GL_f :=  \lbrace g \in \GL(N,\CC) \mid f(g \cdot \bx) = f(\bx) \rbrace.
\]

\begin{Lemma}
 Under our assumptions on $f$ any $g \in \GL_f$ necessarily preserves the weights of the variables, i.e., maps each $x_i$ to a linear combination of $x_j$ with the same weight.
\end{Lemma}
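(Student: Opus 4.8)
The plan is to rephrase the statement group-theoretically and then exploit that the weight grading is rigid for isolated singularities. Fix the canonical integer weights $(d_0,d_1,\dots,d_N)$ from Proposition~\ref{prop: graph exponent matrix is invertible} and introduce the grading torus $\rho(\lambda):=\mathrm{diag}\big(\lambda^{d_1},\dots,\lambda^{d_N}\big)$, so that $f(\rho(\lambda)\bx)=\lambda^{d_0}f(\bx)$. Its eigenspaces are exactly the spans of the equal-weight variables, so the assertion of the lemma is equivalent to saying that every $g\in\GL_f$ commutes with $\rho(\lambda)$ for all $\lambda$, i.e.\ that $g$ is block diagonal with respect to the weight decomposition of $\CC^N$.

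First I would show that $\rho$ normalizes $\GL_f$. This is the short computation
\[
 f\big(\rho(\lambda)\,g\,\rho(\lambda)^{-1}\bx\big)=\lambda^{d_0}f\big(g\,\rho(\lambda)^{-1}\bx\big)=\lambda^{d_0}f\big(\rho(\lambda)^{-1}\bx\big)=f(\bx),
\]
using $g\in\GL_f$ in the middle step. Hence conjugation by $\rho(\CC^\ast)$ preserves $\GL_f$ and induces a $\ZZ$-grading of its Lie algebra, $\mathrm{Lie}(\GL_f)=\bigoplus_k\mathfrak{g}_k$, where $\mathfrak{g}_k$ consists of those $A$ with $A_{ij}=0$ unless $d_i-d_j=k$. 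Block diagonality of the whole group will follow once I show $\mathfrak{g}_k=0$ for $k\neq0$ and then propagate this from the identity component to all of $\GL_f$.

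The heart of the argument is to rule out weight-shifting infinitesimal symmetries, and this is where the isolated singularity enters. An element $A\in\mathfrak{g}_k$ satisfies the defining condition $\sum_i P_i\,\partial_i f=0$ with $P_i:=\sum_j A_{ij}x_j$ a linear form of weight $d_i-k$. Since the singularity is isolated, $\partial_1 f,\dots,\partial_N f$ is a regular sequence, so this syzygy is Koszul: $P_i=\sum_l Q_{il}\,\partial_l f$ with $Q_{il}=-Q_{li}$ weighted-homogeneous, where comparing weights gives $\deg Q_{il}=d_i+d_l-d_0-k$. Saito's normalization $q_i\le\frac{1}{2}$, i.e.\ $d_i\le d_0/2$, then forces $\deg Q_{il}\le -k$; for $k>0$ this is negative, so all $Q_{il}$ and hence $A$ vanish. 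Thus $\mathfrak{g}_k=0$ for $k>0$.

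The remaining points are the main obstacles. For $k<0$ the weight bound only gives $\deg Q_{il}\le -k\ge 0$, so the Koszul coefficients need not vanish on weight grounds alone; here I would extract the linear part of $P_i=\sum_l Q_{il}\,\partial_l f$ and use the refined normalization (the only degree-two monomials of $f$ are the squares $x_k^2$ with $q_k=\frac{1}{2}$) to see that any surviving contribution is supported on weight-$\frac{1}{2}$ variables, then derive a contradiction from the antisymmetry $Q_{il}=-Q_{li}$ together with $q_i<\frac{1}{2}$; equivalently one invokes reductivity of $\GL_f$, under which a $\ZZ$-graded Lie algebra with vanishing positive part can have no negative part either, the latter being a nilpotent ideal. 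Either way $\mathrm{Lie}(\GL_f)=\mathfrak{g}_0$, so $\GL_f^{\circ}$ centralizes $\rho$. Finally, to pass to the full (possibly disconnected) group I would use that $g\,\rho(\mu)\,g^{-1}$ is again a semisimple quasi-homogeneity action for $f$, scaling it by $\mu^{d_0}$ and carrying the same eigenvalues $\{\mu^{d_i}\}$; combined with the rigidity of the canonical weight system this conjugate must coincide with $\rho(\mu)$, giving $g\in\mathrm{Cent}(\rho)$ and hence the claim. The genuinely delicate steps are precisely the boundary case $q_i=\frac{1}{2}$ and the passage from the Lie algebra to the component group.
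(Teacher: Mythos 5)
Your setup (the grading torus $\rho$ normalizes $\GL_f$, inducing a grading on its Lie algebra) and your Koszul argument for the positive part are correct: since an isolated singularity makes $\partial_1 f,\dots,\partial_N f$ a regular sequence, the syzygy $\sum_i P_i\,\partial_i f=0$ is Koszul, and the weight count $\deg Q_{il}=d_i+d_l-d_0-k\le -k<0$ does kill the degree-$k$ part for $k>0$. But the proof is genuinely incomplete at exactly the two points you flag, and neither proposed repair is available as stated. For $k<0$ you offer reductivity of $\GL_f$; that is not known a priori and cannot simply be invoked -- establishing it (say via Matsushima's criterion) would require closedness of the $\GL(N,\CC)$-orbit of $f$, which is nowhere proved and is comparable in depth to the lemma itself. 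Your alternative sketch via antisymmetry can be completed, but the actual content is missing: one must split the syzygy by \emph{polynomial} degree, note that when $f$ has no quadratic terms every $\partial_l f$ has polynomial degree at least $2$, so every monomial of $\sum_l Q_{il}\,\partial_l f$ has degree at least $2$ and a linear $P_i$ must vanish outright -- which in fact annihilates the whole Lie algebra, not merely the parts with $k\neq 0$ -- and then treat the weight-$\frac12$ variables separately (the mixed blocks die because a relation $\sum_j \beta_{jl}\,\partial_j f=0$ with constant coefficients contradicts isolatedness). Your final step has a further gap: Saito's uniqueness of weights \cite{S71} gives only that $g\rho(\mu)g^{-1}$ and $\rho(\mu)$ have the same eigenvalue multiset, not that they coincide as one-parameter subgroups; two grading cocharacters with the same weights could a priori have different eigenspace decompositions, and excluding this needs another argument (e.g., the difference of the two Euler vector fields is a linear vector field annihilating $f$, hence lies in the by-then-computed Lie algebra of $\GL_f$, followed by semisimplicity bookkeeping).

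The paper's proof takes a shorter route that makes both of your trouble spots moot. It first splits off the weight-$\frac12$ subspace at the group level, using that the quadratic part of $f$ is a $g$-invariant quadratic form whose kernel and nondegenerate quotient must be preserved; with quadratic terms removed, it quotes the argument of Orlik--Solomon \cite[Theorem~2.1]{OS78} to conclude that $\GL_f$ is \emph{finite}. Then $\GL_f=\ker\chi$ is normal in the extended group $\wt{\GL}_f$, and the connected torus $t(\CC^*)\subset\wt{\GL}_f$ acting by conjugation on a finite group acts trivially, so $\GL_f$ centralizes $t(\CC^*)$ and preserves its eigenspaces, i.e., the weight subspaces. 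Finiteness eliminates the entire Lie-algebra analysis and the identity-component/component-group distinction in one stroke; your approach, once the $k<0$ and component-group steps are genuinely carried out, would yield an independent and somewhat more self-contained proof (it reproves finiteness in the no-quadratic case rather than citing \cite{OS78}), but as written it is a program with two acknowledged holes rather than a proof.
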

\begin{proof}
The action of $g$ preserves the homogeneous components of $f$. In particular, the variables in the quadratic terms of $f$ map to linear combinations of variables in the quadratic terms of $f$ and hence weight $1/2$ subspace is preserved by $f$.

We may now assume that $f$ has no quadratic terms. In this case, the argument of \cite[Theorem 2.1]{OS78} applies verbatim to the quasihomogeneous situation to prove that $\GL_f$ is finite. Let \[\wt{\GL}_f= \lbrace g \in \GL(N,\CC) \mid f(g \cdot \bx) = \chi(g)f(\bx), \chi(g)\in\CC^*  \rbrace.\]
The map $\chi\colon\wt{\GL}_f\to\CC^*$ is a character and $\GL_f$ is precisely the kernel of $\chi$, in particular, it is a normal subgroup. Moreover, the condition \eqref{QH1} provides an inclusion \smash{$t\colon\CC^*\hookrightarrow\wt{\GL}_f$}, such that $\chi\circ t$ is a degree $d_0>0$ map. The action by conjugation of connected subgroup $t(\CC^*)$ on the finite subgroup $\GL_f$ is necessary trivial, which means that $\GL_f$ commute with $t(\CC^*)$. This means that $\GL_f$ preserves the eigenspaces of $t(\CC^*)$ as desired.
\end{proof}

\begin{Remark}
 Notably, the observation of this Lemma seems to be unknown to previous authors (for example, it was imposed as condition in \cite{Muk} and some subsequent works).
\end{Remark}

Let $G_f^d \subseteq \GL_f$ be the \textit{maximal group of diagonal symmetries} of $f$. This is the group of all diagonal elements of $\GL(N,\CC)$ belonging to $\GL_f$.

We have
\[
	G_f^d \cong \bigl\lbrace (\lambda_1,\dots,\lambda_N) \in (\CC^*)^N \mid f(\lambda_1x_1,\dots,\lambda_Nx_N) = f(x_1,\dots,x_N) \bigr\rbrace.
\]

It's obvious that
\begin{equation}\label{eq: full symmetry group of oplus}
	G_{f' \oplus f''}^d \cong G_{f'}^d \times G_{f''}^d.
\end{equation}
Note, however, that the same does not necessarily hold for $\GL_{f' \oplus f''}$.

In what follows we will use the notation
\[
\epi\left[\alpha\right] := \exp\bigl(2 \pi \sqrt{-1} \alpha\bigr), \qquad \alpha \in \RR.
\]
Each element $g\in G_f^d$ has a unique expression of the form
\begin{equation*}%\label{G-notation}
g={\rm diag}\left(\epi\left[\frac{\alpha_1}{r}\right], \dots,\epi\left[\frac{\alpha_N}{r}\right]\right)
\qquad \mbox{with}\quad 0 \leq \alpha_i < r,\quad \alpha_i\in\NN,
\end{equation*}
where $r$ is the order of $g$. We adopt the additive notation
\[
{\overline g = (\alpha_1/r,\dots, \alpha_N/r)} \qquad \text{ or } \qquad
{\overline g = \frac{1}{r}(\alpha_1,\dots, \alpha_N) }
\]
for such an element $g$.

\begin{Example}
For $f = x_1^{a_1}$	we have $\GL_f = G_f^d = \langle g \rangle $ with $g \in \CC^\ast$ acting by $g(x_1) = \epi\bigl[\frac{1}{a_1}\bigr] \cdot x_1$. Its order is $a_1$ and in the additive notation we have $\overline g = (1/a_1)$, giving us $\GL_f \cong \ZZ / a_1 \ZZ$.
\end{Example}
\begin{Example}\label{example: symmetry group chain2}
For $f = x_1^{a_1}x_2 + x_2^{a_2}$ we have $G_f^d = \langle g_1, g_2 \rangle$ with $g_1 \cdot (x_1,x_2) = (\epi[\frac{1}{a_1}] x_1, x_2)$ and ${ g_2 \cdot (x_1,x_2) = \bigl(\epi[\frac{1}{a_2}(1 - \frac{1}{a_1})]x_1, \epi\bigl[\frac{1}{a_2}\bigr]x_2\bigr)}$. In the additive notation $\overline g_1 = (1/a_1, 0)$ and $\overline g_2 = ((a_1 - 1)/(a_1a_2), 1/a_2)$.
In this example $\GL_f = G_f^d$ because $q_1 \neq q_2$.
\end{Example}

Let $(q_1,\dots,q_N)$ be the reduced weight set of $f$. Then we have
\[
	j_f:= (\epi[q_1],\dots, \epi[q_N]) \in G_f^d.
\]
In particular, it follows that $G_f^d$ and $\GL_f$ are not empty whenever $f$ is quasihomogeneous.
Denote by $J$ the group generated by $j_f$:
\[
	J := \langle j_f \rangle \subseteq G_f^d.
\]
Since $g\in \GL_f$ preserves the weights we see that $j_f$ commutes with $g$. In other words $J$ is the central subgroup of~$\GL_f$.

\subsection[Fixed loci of the GL\_f elements]{Fixed loci of the $\boldsymbol{\GL_f}$ elements}

For each $g\in \GL_f$, denote by $\Fix(g)$ \textit{the fixed locus of $g$}
\begin{equation*}
 	\Fix(g):=\bigl\lbrace (x_1,\dots,x_N) \in\CC^N \mid g \cdot (x_1,\dots,x_N) = (x_1,\dots,x_N) \bigr\rbrace.
\end{equation*}
This is an eigenvalue $1$ subspace of $\CC^N$ and therefore a linear subspace of $\CC^N$.
By $N_g:=\dim_\CC\Fix(g)$ denote its dimension and by $f^g:=f|_{\Fix(g)}$ the restriction of $f$ to the fixed locus of $g$.
For $g\in G_f^d$, this linear subspace is furthermore a span of a collection of standard basis vectors.

For each $h\in G_f^d$, let $I_h := \{i_1,\dots,i_{N_h}\}$ be a subset of $\{1,\dots, N\}$ such that
\[
\Fix(h)=\bigl\{(x_1,\dots,x_N)\in\CC^N\mid x_{j}=0, j\notin I_h\bigr\}.
\]
In the other words, $\Fix(h)$ is indexed by $I_h$.
In particular, $I_{\id}=\{1,\dots, N\}$.

More generally, for $g\in \GL_f$, since $g$ preserves the weight subspaces of $\CC^N$, the weights of the subspace $\Fix(g)$ are well-defined and are the subset of $\{q_1,\ldots,q_N\}$. Fix a subset ${I_g\subset \{1,\ldots,N\}}$ such that $q_k$ with $k\in I_g$ are exactly all the weights of $\Fix(g)$, so that, in particular, we have~${|I_g|=N_g}$. Note that if $g\not\in G_f^d$ there is no canonical choice for $I_g$, but the choice made at this step will not impact our results.

Denote by $I_h^c$ the complement of $I_h$ in $I_\id$ and set $d_h:=N-N_h$, the codimension of $\Fix(h)$.

\begin{Proposition}
	For any diagonalizable $g \in \GL_f$ with $N_g > 0$ there is a choice of coordinates on $\Fix(g)$ linear in $x_i$, such that the polynomial $f^g$ also defines a quasihomogeneous singularity.
\end{Proposition}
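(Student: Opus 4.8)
The plan is to split the statement into two claims—quasihomogeneity and isolatedness of the singularity—after first putting $g$ into a convenient diagonal form. By the Lemma above, $g$ preserves each weight subspace of $\CC^N$; since $g$ is assumed diagonalizable, its restriction to each such subspace is diagonalizable too. Hence I can choose new linear coordinates $y_1,\dots,y_N$, with each $y_i$ a linear combination of the original variables of a single weight, in which $g$ acts diagonally, $g\cdot y_i = \lambda_i y_i$. Writing $I_g = \{i : \lambda_i = 1\}$, which has size $N_g$, the fixed locus becomes the coordinate subspace $\Fix(g) = \{y_j = 0 : j\notin I_g\}$, and $f^g$ is obtained from $f$ by setting these $y_j$ to zero. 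Restricted to $\Fix(g)$ the $y_i$ with $i\in I_g$ give coordinates linear in the $x_i$, as required.

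Quasihomogeneity is then immediate. Because each $y_i$ mixes only original variables of a fixed weight, rewriting $f$ in the $y$-coordinates leaves it a sum of monomials all of the same weighted degree $d_0$, i.e.\ quasihomogeneous with $y_i$ carrying the weight of the $x$-variables it replaces. Restricting to the coordinate subspace $\Fix(g)$ merely discards some of these monomials, so $f^g$ stays quasihomogeneous in the variables $\{y_i : i\in I_g\}$, with positive reduced weights forming the subset of $\{q_1,\dots,q_N\}$ indexed by $I_g$.

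The substantive point is that $f^g$ has an isolated singularity at the origin, and here I would exploit the $g$-invariance of $f$ at the level of first derivatives. Differentiating $f(g\cdot y) = f(y)$ in $y_i$ gives $(\p f/\p y_i)(g\cdot y) = \lambda_i^{-1}(\p f/\p y_i)(y)$, so each partial derivative transforms by the scalar $\lambda_i^{-1}$ under $g$. If $p\in\Fix(g)$ is a critical point of $f^g$, then $(\p f/\p y_i)(p)=0$ for every $i\in I_g$ by definition of a critical point of the restriction. For $j\notin I_g$, plugging $g\cdot p = p$ into the relation above yields $(\p f/\p y_j)(p) = \lambda_j^{-1}(\p f/\p y_j)(p)$, and since $\lambda_j\neq 1$ this forces $(\p f/\p y_j)(p)=0$ as well. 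Thus $p$ is a critical point of $f$ on all of $\CC^N$, and isolatedness of the singularity of $f$ gives $p=0$; hence $0$ is the only critical point of $f^g$.

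I expect the only delicate point to be the first step: arranging the diagonalization of $g$ to be compatible with the weight grading, which relies on the Lemma above together with the diagonalizability hypothesis. Once the coordinates are chosen so that $\Fix(g)$ is a coordinate subspace and $g$ acts diagonally, the quasihomogeneity bookkeeping and the derivative computation are routine. Note that diagonalizability of $g$ is used only to produce these coordinates; for $g\in G_f^d$ the standard basis already works.
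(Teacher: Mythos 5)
Your proposal is correct and takes essentially the same route as the paper: the paper's proof likewise passes to coordinates dual to an eigenbasis of $g$ (weight-compatible thanks to the Lemma) so that $f^g$ is obtained by setting some coordinates to zero, and then defers to \cite[Proposition~5]{ET13}, whose content is precisely your derivative argument that $(\p f/\p y_j)(p)=\lambda_j^{-1}(\p f/\p y_j)(p)$ forces every critical point of $f^g$ to be a critical point of $f$, hence the origin. You have merely written out in full the step the paper cites, so nothing is missing.
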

\begin{proof}
	Let $\widetilde x_1,\dots, \widetilde x_N$ be the coordinates of $\CC^N$ dual to the basis diagonalizing $g$. In this coordinates the polynomial $f^g$ is obtained by setting some of $\widetilde x_\bullet$ to zero. The proof follows now by the same argument as in \cite[Proposition~5]{ET13}.
\end{proof}

\begin{Example}
If $f$ is of Fermat or loop type, then any $g \in \GL_f^d$, such that $g \neq \id$ satisfies $\Fix(g) = 0$.
If $f = x_1^{a_1} + x_1x_2^{a_2} + \dots +x_{N-1}x_N^{a_N}$ is of chain type, then any $g \in \GL_f^d$, such that $g \neq \id$ satisfies
$\Fix(g) = \bigl\lbrace (x_1,\dots,x_p,0,\dots,0) \in \CC^N \mid x_k \in \CC \bigr\rbrace$ for some $p$ depending on~$g$.
The polynomial $f^g$ is of chain type again: $f^g = x_1^{a_1} + x_1x_2^{a_2} + \dots +x_{p-1}x_p^{a_p}$.
\end{Example}

Denote also
\[
\SL_f:=\GL_f\cap\SL(N,\CC).
\]
This group will be important later on because it preserves the volume form of $\CC^N$.

\subsection[Age of a GL\_f element]{Age of a $\boldsymbol{\GL_f}$ element}

For $g\in \GL_f$ let $\lambda_1,\ldots,\lambda_N$ be the collection of its eigenvalues. Let $0\le \alpha_i < 1$ be such that $\lambda_i=\epi[\alpha_i]$, then {\em age} of $g$ is defined as the number
\begin{equation*}%\label{eq: age in diagonal}
\age(g) := \sum_{k=1}^N \alpha_k.
\end{equation*}

The following properties are clear but will be important in what follows.

\begin{Proposition}\label{prop: integral age}\quad
\begin{itemize}\itemsep=0pt
\item[$(1)$] For any $g \in \GL_f$ we have
\begin{equation}\label{eq: age g and inverse}
		\age(g) + \age\bigl(g^{-1}\bigr) = N - N_g = d_g.
\end{equation}

\item[$(2)$] For a diagonalizable $g\in \GL_f$ we have ${\rm age}(g) = 0$ if and only if $g = \id$.

\item[$(3)$] We have $g\in \SL_f$ if and only if ${\rm age}(g)\in\NN$.
\end{itemize}
\end{Proposition}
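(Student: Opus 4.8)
The plan is to reduce every part to elementary bookkeeping with the eigenvalue phases and the determinant, the only substantive input being that the relevant $g$ are semisimple (which holds for every finite-order element, and in particular for every element of a finite symmetry group, where the eigenvalues are roots of unity). Throughout I fix the eigenvalues $\lambda_1,\dots,\lambda_N$ of $g$ and write $\lambda_i = \epi[\alpha_i]$ with $0 \le \alpha_i < 1$, so that $\age(g) = \sum_i \alpha_i$; note this normalization already presupposes $|\lambda_i|=1$, which is exactly the regime in which $\age$ is defined.

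For (1), I would record that the eigenvalues of $g^{-1}$ are the inverses $\lambda_i^{-1} = \epi[-\alpha_i]$, and then renormalize each phase into $[0,1)$. If $\lambda_i = 1$ then $\alpha_i = 0$ and the normalized phase of $\lambda_i^{-1}$ is again $0$; if $\lambda_i \neq 1$ then $\alpha_i \in (0,1)$ and $\lambda_i^{-1} = \epi[1-\alpha_i]$ has normalized phase $1-\alpha_i$. Hence the contribution of the $i$-th eigenvalue to $\age(g)+\age(g^{-1})$ is $0$ when $\lambda_i=1$ and $1$ otherwise, so $\age(g)+\age(g^{-1}) = \#\{i : \lambda_i \neq 1\}$. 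It then remains to identify this count with $d_g = N - N_g$, which follows once we know the multiplicity of the eigenvalue $1$ equals $N_g = \dim_\CC \Fix(g)$; since $\Fix(g)$ is the eigenspace for the eigenvalue $1$, this is precisely semisimplicity of $g$ at that eigenvalue.

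For (2) and (3) the computations are immediate. In (2), $g=\id$ gives all $\alpha_i=0$ and hence $\age(g)=0$; conversely $\age(g)=\sum_i\alpha_i=0$ with all $\alpha_i\ge 0$ forces every $\alpha_i=0$, i.e.\ every eigenvalue equals $1$, and diagonalizability then yields $g=\id$. For (3) I would compute the determinant as a product of eigenvalues, $\det g = \prod_i \epi[\alpha_i] = \epi[\sum_i \alpha_i] = \epi[\age(g)]$, so that $\det g = 1$ if and only if $\age(g)\in\ZZ$; as $\age(g)$ is a sum of nonnegative reals it is automatically $\ge 0$, whence $\age(g)\in\ZZ$ is equivalent to $\age(g)\in\NN$. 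Since $g\in\SL_f$ means exactly $\det g = 1$, this proves (3).

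The one genuinely delicate point is the semisimplicity used in (1) and (2). For a non-semisimple $g$ the generalized eigenspace of the eigenvalue $1$ is strictly larger than $\Fix(g)$, so a nontrivial unipotent would have $\age(g)=\age(g^{-1})=0$ yet $d_g>0$, breaking (1), and would likewise have age $0$ without being the identity, breaking the forward direction of (2). This is exactly why (2) is stated for diagonalizable $g$, and why in practice these identities are applied to elements of finite subgroups, for which semisimplicity and $|\lambda_i|=1$ are both automatic; granting this, each part is a one-line manipulation of phases and of the determinant.
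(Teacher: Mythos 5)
Your proof is correct, and in fact it supplies an argument where the paper gives none: the paper simply declares these properties ``clear'' and offers no proof, so the eigenvalue-phase bookkeeping you carry out is exactly the intended (and essentially the only) argument. Beyond filling in the computation, you make one observation that genuinely sharpens the statement: part~(1), as literally phrased ``for any $g \in \GL_f$'', tacitly assumes both that $\age(g)$ is defined (all eigenvalues of modulus~$1$) and that $g$ is semisimple, so that $N_g = \dim_\CC \Fix(g)$ coincides with the multiplicity of the eigenvalue~$1$. This is not vacuous in the paper's setting: Saito's normal form allows quadratic terms $x_k^2$ for variables of weight $1/2$, and on that subspace $\GL_f$ contains a complex orthogonal group, which for three or more such variables contains nontrivial unipotent elements $u$ with all eigenvalues equal to~$1$; for these, $\age(u) + \age\bigl(u^{-1}\bigr) = 0$ while $d_u > 0$, so~(1) fails, exactly as you predict. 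The paper implicitly restricts to finite-order elements (as in its finite subgroups $G$, where semisimplicity and unit-modulus eigenvalues are automatic) and makes the diagonalizability hypothesis explicit only in part~(2); your proof makes this hidden hypothesis visible, which is a worthwhile refinement rather than a defect. Your treatments of (2) and (3) are complete, and you are right that (3) needs no semisimplicity at all, since $\det g$ is the product of eigenvalues with algebraic multiplicity.
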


\subsection[Diagonal symmetries and a graph Gamma\_f]{Diagonal symmetries and a graph $\boldsymbol{\Gamma_f}$}

\begin{Proposition}\label{prop: group elements on graph}
	Let $\Gamma_f$ be a graph of a quasihomogeneous singularity $f$ and $g \in \GL_f$. Then if $g$ acts nontrivially on $x_k$, then it acts nontrivially on all $x_i$, such that there is an oriented path from $i$-th to the $k$-th vertex.
\end{Proposition}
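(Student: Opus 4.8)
The plan is to reduce to a single arrow and then propagate along oriented paths. Recall that an arrow $i \to k$ in $\Gamma_f$ records a graph monomial $c\,x_i^a x_k$ of $f$ with $c \neq 0$ and $a \ge 2$, where $k=\kappa(i)$. I would prove the contrapositive local statement: \emph{if $g$ acts trivially on $x_i$ (i.e.\ $x_i\circ g = x_i$), then $g$ acts trivially on $x_k$}. Granting this, an oriented path $i = v_0 \to v_1 \to \dots \to v_m = k$ yields, step by step, that triviality on $x_{v_0}$ forces triviality on $x_{v_1},\dots,x_{v_m}=x_k$; contrapositively, nontriviality on $x_k$ forces nontriviality on each $x_{v_t}$, in particular on $x_{v_0}=x_i$, which is exactly the assertion.

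The mechanism is transparent for diagonal $g \in G_f^d$, say $g = {\rm diag}(\lambda_1,\dots,\lambda_N)$. Since distinct monomials are linearly independent and $g$ scales each of them, invariance $f(g\cdot \bx) = f(\bx)$ forces every monomial to be preserved, so the graph monomial gives the relation $\lambda_i^{a}\lambda_k = 1$. Hence if $g$ acts nontrivially on $x_k$, i.e.\ $\lambda_k \neq 1$, then $\lambda_i^{a} = \lambda_k^{-1} \neq 1$ and therefore $\lambda_i \neq 1$; equivalently, $\lambda_i = 1$ forces $\lambda_k = 1$. The entire diagonal case of the Proposition is this relation together with the path induction above.

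For general $g \in \GL_f$ I would exploit the Lemma that $g$ preserves the weight subspaces, so that $g$ mixes only variables of equal weight: writing the action on linear forms as $g^{*}x_l = \sum_j A_{lj}x_j$ with $A_{lj}\neq 0 \Rightarrow q_l = q_j$, triviality on $x_i$ means that row $i$ of $A$ is the standard vector. I would then compare coefficients in $f(g\cdot\bx) = f(\bx)$, most conveniently through the gradient: differentiating the identity gives $g^{*}(\partial_{x_l}f) = \sum_m (A^{-1})_{ml}\,\partial_{x_m}f$, while a weight count shows that $\partial_{x_k}f = c\,x_i^{a} + (\text{terms of strictly smaller } x_i\text{-degree})$, with $x_i^{a}$ the \emph{unique} top-$x_i$-degree monomial. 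On the right-hand side only those $\partial_{x_m}f$ with $f \ni x_i^{a}x_m$ (forcing $q_m = q_k$) contribute a pure $x_i^{a}$, the term $m=k$ being the distinguished one; reading off the coefficient of $x_i^{a}$ (using $g^{*}x_i = x_i$) reproduces exactly $\lambda_i^{a}\lambda_k = 1$ in the diagonal case and, in general, a linear relation among the entries of row $k$ of $A$. Running over the coefficients of $x_i^{a}x_j$ for all $j$ with $q_j = q_k$ produces the system that should force row $k$ to be standard as well.

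The main obstacle is precisely the non-diagonal mixing. Because $g$ can mix $x_k$ with the other weight-$q_k$ variables and, through nonzero entries of column $i$ of $A$, can raise lower-$x_i$-degree terms of $\partial_{x_k}f$ up to $x_i$-degree $a$, the coefficient of $x_i^{a}$ a priori receives spurious contributions rather than the single clean relation of the diagonal case. To control these I would use that $x_i^{a}x_k$ is the graph monomial attached to the vertex $i$, i.e.\ the unique graph monomial having $x_i$ as its leading variable, together with the isolated-singularity hypothesis: restricting to $\Fix(g)$ and invoking that $f^{g}$ is again a quasihomogeneous isolated singularity, so that every surviving variable must remain the leading variable of some monomial, should rule out the spurious contributions and isolate the relation that propagates nontriviality. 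The induction along oriented paths described in the first paragraph then completes the proof.
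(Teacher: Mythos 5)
Your first two paragraphs are, in substance, exactly the paper's proof: the paper reduces to a single arrow $j \to k$, observes that the graph monomial $x_j^{a_j}x_k$ can only be preserved when $g$ acts nontrivially on $x_k$ if $g$ also acts nontrivially on $x_j$, and then propagates along the oriented path -- precisely your contrapositive-plus-induction, with your relation $\lambda_i^{a}\lambda_k=1$ making the diagonal case explicit. Note that everywhere the paper actually applies this Proposition (the decomposition $g = \widetilde g_0 \cdot g_1 \cdots g_p$ of elements of $G_f^{\rm gr}$) the element is diagonal, and for diagonal $g$ distinct monomials are eigenvectors, so monomial-by-monomial preservation is automatic and the argument closes. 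You are right that for non-diagonal $g\in\GL_f$ there is something further to check -- the paper's one-sentence treatment glosses this -- but your attempted completion of that case is where the problem lies.

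The genuine gap is in your third paragraph, and the specific fix you propose cannot work. For non-diagonal $g$ fixing $x_i$, matching the coefficient of $x_i^{a}$ gives only that the linear form $\sum_m c_m x_m$ (summing over the monomials $x_i^{a}x_m$ of $f$, all with $q_m = q_k$) is invariant under the weight-$q_k$ block of $g$, which does not force $g$ to fix $x_k$; and your appeal to $\Fix(g)$ and the isolatedness of $f^{g}$ cannot rule this out, because all of those conditions can hold while the conclusion fails. Take $f = x_1^2x_2 + x_1^2x_3 + x_2^3 + x_3^3$ (all weights $1/3$, each variable is leading in a monomial, so $\Gamma_f$ has the arrow $1\to 2$) and let $g$ be the transposition $x_2 \leftrightarrow x_3$. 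Then $g$ preserves $f$ and the weights, acts trivially on $x_1$ and nontrivially on $x_2$, and on $\Fix(g) = \{x_2 = x_3 = t\}$ the restriction $f^{g} = 2x_1^2t + 2t^3$ is an isolated quasihomogeneous singularity in which every surviving variable remains a leading variable. Every ingredient invoked in your closing paragraph is satisfied, yet the implication fails; the only hypothesis of the Proposition violated here is isolatedness of $f$ itself, which has the critical line $\bigl(t, \sqrt{-1}\,t/\sqrt{3}, -\sqrt{-1}\,t/\sqrt{3}\bigr)$. So any honest completion of the non-diagonal case must use the isolatedness of the full polynomial $f$, not merely of its restriction $f^{g}$ to the fixed locus; as written, your ``should rule out the spurious contributions'' is a hope rather than an argument.
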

\begin{proof}
 We first show the statement for the arrows pointing at $k$.
 Having an arrow $j \to k$ means that $f$ has a monomial $x_j^{a_j}x_k$ as a summand with a nonzero coefficient. We have $g \cdot x_k \neq x_k$ and therefore the summand can only be preserved under the action of $g$ if $g\cdot x_j \neq x_j$. Having an oriented path $i \to j_1 \to \cdots \to j_n \to k$ we have by using the previous step that $g \cdot x_{j_n} \neq x_{j_n}$ and then $g \cdot x_{j_a} \neq x_{j_a}$ for all $a$. Hence, for $x_i$ too.
\end{proof}

Let $E_f$ be the graph exponents matrix of $f$. Consider
\begin{equation*}
G_f^{\rm gr} := \Biggl\{(\lambda_1,\dots,\lambda_N)\in (\CC^*)^N \, \Biggl|\, \prod_{j=1}^N \lambda_j ^{E_{1j}}=\dots =\prod_{j=1}^N\lambda_j^{E_{Nj}}=1 \Biggr\} \end{equation*}
with $E_{ij}$ being the components of \smash{$E_f$}.
The group \smash{$G_f^{\rm gr}$} is exactly the maximal group of diagonal symmetries of the difference $f-f_{\rm add}$. In particular, every element of \smash{$G_f^{\rm gr}$} preserves all graph monomials of $f$.

We have \smash{$G_f^d \subseteq G_f^{\rm gr}$} and hence \smash{$G_f^d$} is a finite group.
An element ${\overline g = \frac{1}{r}(\alpha_1,\dots,\alpha_N)}$ belonging to $G_f^{\rm gr}$ satisfies
$E_f \cdot \overline g \in \ZZ^N$.
This gives yet another characterization of the group $G_f^{\rm gr}$
\begin{equation*}%\label{eq: Gf additive}
	G_f^{\rm gr} \cong \bigl\lbrace \overline g \in (\QQ/\ZZ)^N \mid E_f \cdot \overline g \in \ZZ^N \bigr\rbrace = E_f^{-1} \ZZ^N / \ZZ^N.
\end{equation*}

It follows that every vector $\overline g$ giving a \smash{$G_f^{\rm gr}$}-element is a linear combination with integer coefficients of the columns of \smash{$E_f^{-1}$}. Following the notation of Krawitz~\cite{K09} define $\overline \rho_i$ as the $i$-th column of $E_f^{-1}$
\begin{equation*}
	E_f^{-1} =  ( \overline \rho_1 | \dots | \overline \rho_N  ).
\end{equation*}
Denote also $\rho_i := \epi[\overline \rho_i] \in G_f^d$.

The elements $\rho_k$ generate $G_f^{\rm gr}$ and $j_f = \rho_1\cdots\rho_N$.
The columns of $E_f$ generate all relations on $\rho_1, \dots, \rho_N$.

In particular, for $(E_{1k}, \dots, E_{Nk})^{\mathsf{T}}$ being a $k$-th column of $E_f$ we have in $G_f^{\rm gr}$
\[
\rho_1^{E_{1k}}\cdots \rho_N^{E_{Nk}} = \id,
\]
and all other relations among $\lbrace \rho_k \rbrace_{k=1}^N$ follow from those written above.

\subsection{Diagonal symmetries of an invertible singularity}

In \cite{FJJS} for an invertible $f$, the authors gave the set $\calS_f$ of all $N$-tuples $(s_1,\dots,s_N)$ such that every $g~\in~G_f^d~\backslash\lbrace\id\rbrace$ is written uniquely by
\[
	g = \prod_{k \in I_g^c} \rho_k^{s_k},
\]
and $s_k = 0$ if and only if $k \in I_g$. Due to equation~\eqref{eq: full symmetry group of oplus} and Proposition~\ref{prop: invertible polynomial classification}, it is enough to construct such set for Fermat, loop or chain type polynomials.

\begin{Proposition}\label{proposition: chain loop group structure}
For $f$ being of Fermat, chain or loop type the set $\calS_f$ consists of all $s = (s_1,\dots,s_N)$, such that
	\begin{itemize}\itemsep=0pt
	 \item $($Fermat type$)$: $1 \le s_1 \le a_1-1$
	 \item $($loop type$)$: $1 \le s_k \le a_k$ and
	 \begin{equation*}\label{eq: vanishing r in loop}
		s \neq (a_1,1,a_3,1,\dots, a_{N-1},1), \qquad s \neq (1,a_2,1,a_4,1,\dots, a_{N})
	 \end{equation*}
	 if $N$ is even.
	 \item $($chain type$)$: $s$ is of the form
	 \begin{equation*}%\label{eq: group chain condition}
	 (0,\dots,0,s_p,s_{p+1},\dots,s_{N}), \qquad \text{with} \quad \{1,\dots,p-1\} = I_g,
	 \end{equation*}
	 with $1 \le s_{p} \le a_{p}-1$, $1 \le s_k \le a_k$ for $k > p$.
	\end{itemize}
\end{Proposition}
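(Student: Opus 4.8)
The plan is to work throughout with the identification $G_f^d\cong\ZZ^N/E_f\ZZ^N$ recorded in the previous subsection, under which the class of an integer tuple $s=(s_1,\dots,s_N)$ corresponds to $\prod_k\rho_k^{s_k}$, the group order equals $\det E_f$, and the resulting element acts on the coordinate $x_j$ by multiplication with $\epi[\sum_k s_k(\overline\rho_k)_j]$. Since the reduction to the three types has already been noted, I would treat Fermat, chain and loop separately, in each case first reading off the support $I_g^c$ from the positions of the nonzero $s_k$, and then showing that the prescribed tuples biject with $G_f^d\setminus\{\id\}$ by combining injectivity with a count that matches $\det E_f-1$. The Fermat case is immediate: $E_f=(a_1)$, so $\rho_1$ has order $a_1$ and $1\le s_1\le a_1-1$ enumerates the nontrivial elements of $\ZZ/a_1\ZZ$.

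For the chain type I would exploit that $E_{\rm chain}$ is lower triangular, hence so is its inverse: the recursion defining $E_f^{-1}$ gives $(\overline\rho_k)_j=0$ for $j<k$ and $(\overline\rho_k)_j=(-1)^{j-k}/(a_k\cdots a_j)$ for $j\ge k$. Thus $\rho_k$ fixes $x_1,\dots,x_{k-1}$ and acts on $x_k$ by $\epi[1/a_k]$. Consequently a tuple $(0,\dots,0,s_p,\dots,s_N)$ with $s_p\neq0$ produces an element fixing exactly $x_1,\dots,x_{p-1}$, which pins down $I_g=\{1,\dots,p-1\}$ and forces the bound $1\le s_p\le a_p-1$ so that $x_p$ is genuinely moved. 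Uniqueness then follows by a triangular solve: comparing actions on $x_p,x_{p+1},\dots$ in turn yields $s_k\equiv s_k'\pmod{a_k}$, and the stated ranges force $s_k=s_k'$. Finally the count telescopes, $\sum_{p=1}^N(a_p-1)a_{p+1}\cdots a_N=\prod_k a_k-1=\det E_{\rm chain}-1$, giving the bijection with $G_f^d\setminus\{\id\}$.

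The loop type is the delicate case. Here $\Fix(g)=0$ for every $g\neq\id$, so every nontrivial element uses all coordinates and the ranges are $1\le s_k\le a_k$; the content is that the map from the box $\prod_k\{1,\dots,a_k\}$ to $G_f^d$ is a bijection onto $G_f^d\setminus\{\id\}$ except that the two listed tuples collapse when $N$ is even. Two tuples $s,s'$ give the same element iff $s-s'=E_{\rm loop}m$ for $m\in\ZZ^N$, i.e.\ $w_i:=s_i-s_i'$ satisfies $w_i=a_im_i+m_{i+1}$ cyclically with $|w_i|\le a_i-1$. The key estimate is that $|m_i|\ge2$ forces $|m_{i+1}|>|m_i|$, so on a cycle the magnitudes can never return and all $m_i\in\{-1,0,1\}$; a second look shows a nonzero $m_i$ propagates to a strict sign alternation $(1,-1,1,\dots)$ with no intervening zeros, which closes up only for $N$ even and then yields exactly $m=\pm(1,-1,\dots,-1)$. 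Tracing back, $w$ forces $s=(a_1,1,a_3,1,\dots)$ and $s'=(1,a_2,1,\dots,a_N)$, and since each of these is $E_{\rm loop}$ applied to an indicator vector (the sum of the odd, resp.\ even, columns of $E_{\rm loop}$), both map to $\id$. With $\det E_{\rm loop}=\prod_k a_k-(-1)^N$ the bijection follows by counting: for $N$ odd the box injects and misses only $\id$, while for $N$ even exactly the two excluded tuples map to $\id$ and the rest biject with $G_f^d\setminus\{\id\}$.

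I expect the loop-type magnitude-and-alternation argument to be the main obstacle, both because it is the one genuinely combinatorial input and because it takes care to conclude that the two exceptional tuples are precisely the preimages of the identity rather than of some other coincidental element. Everything else — the triangular structure in the chain case, the fixed-locus reading of the support, and the telescoping counts — is routine once the identification $G_f^d\cong\ZZ^N/E_f\ZZ^N$ and the explicit columns $\overline\rho_k$ of $E_f^{-1}$ are in hand.
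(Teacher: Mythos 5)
Your proposal is correct, and it is worth noting at the outset that the paper itself offers no proof of this proposition at all: it is quoted from the reference [FJJS] (Francis--Jarvis--Johnson--Suggs), so there is no in-paper argument to compare against. Your lattice-theoretic route --- identifying $G_f^d\cong \ZZ^N/E_f\ZZ^N$, reading $I_g$ off the support of $s$, and matching counts against $\det E_f$ --- is the natural self-contained proof, and the delicate points all check out: the inverse of the lower-triangular chain matrix is as you state, the telescoping count $\sum_{p=1}^N(a_p-1)a_{p+1}\cdots a_N=\prod_k a_k-1$ is right, and in the loop case the estimate $|m_{i+1}|\ge a_i(|m_i|-1)+1> |m_i|$ for $|m_i|\ge 2$ together with the forced sign alternation $m_i=1\Rightarrow m_{i+1}=-1\Rightarrow m_{i+2}=1$ correctly isolates $m=\pm(1,-1,\dots,-1)$ for even $N$, pins the colliding pair to exactly the two excluded tuples, and identifies both as preimages of $\id$ (each being a sum of alternate columns of $E_{\rm loop}$), consistent with $\det E_{\rm loop}=\prod_k a_k-(-1)^N$.

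Two small points deserve to be written out rather than glossed. First, in the chain case, ``fixes \emph{exactly} $x_1,\dots,x_{p-1}$'' needs, beyond triangularity (which gives that $x_1,\dots,x_{p-1}$ are fixed and $x_p$ is moved when $1\le s_p\le a_p-1$), the propagation of moved coordinates up the chain: invariance of the monomial $x_{j-1}x_j^{a_j}$ gives $\lambda_{j-1}\lambda_j^{a_j}=1$, so $x_{j-1}$ moved forces $x_j$ moved --- this is exactly the paper's Proposition~\ref{prop: group elements on graph}, which you implicitly invoke. Second, for odd $N$ in the loop case, ``the box injects and misses only $\id$'' requires a separate check that no box tuple maps to $\id$; your collision analysis does not cover this, since comparing $s$ with $0$ gives only the weaker bound $1\le s_i\le a_i$ rather than $|s_i-s_i'|\le a_i-1$. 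The same analysis does go through with the weaker bound: $s=E_{\rm loop}m$ with $1\le s_i\le a_i$ forces $m_i\in\{0,1\}$ with $m_i=0\Rightarrow m_{i+1}=1$ and $m_i=1\Rightarrow m_{i+1}=0$, a strict $0,1$-alternation impossible on an odd cycle --- and, pleasantly, for even $N$ it reproduces exactly your two exceptional tuples as the only preimages of $\id$. With these two routine verifications inserted, your argument is a complete proof.
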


\begin{Corollary}
	In the additive notation for the column ${s^{\mathsf{T}} = (s_1,\dots,s_N)^{\mathsf{T}}}$, we have ${\overline g = E_f^{-1} s^{\mathsf{T}}}$\!.
\end{Corollary}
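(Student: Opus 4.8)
The plan is to unwind the definition of the additive notation together with that of the generators $\rho_k$, reducing the claim to the elementary fact that multiplying diagonal symmetries corresponds to adding their additive vectors modulo $\ZZ^N$.

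First I would recall that by definition $\rho_k = \epi[\overline \rho_k]$, where $\overline \rho_k$ is the $k$-th column of $E_f^{-1}$; equivalently, the additive vector of $\rho_k$ is $\overline \rho_k \in (\QQ/\ZZ)^N$. Next I would note that for diagonal elements the additive notation is additive: if $g_1, g_2 \in G_f^d$ have additive vectors $\overline{g_1}$ and $\overline{g_2}$, then $g_1 g_2$ has additive vector $\overline{g_1} + \overline{g_2}$ read in $(\QQ/\ZZ)^N$, since each diagonal entry is a product of the corresponding roots of unity and their exponents add. In particular the additive vector of $\rho_k^{s_k}$ is $s_k \overline \rho_k$.

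Applying this to the expression $g = \prod_{k \in I_g^c} \rho_k^{s_k}$ furnished by Proposition~\ref{proposition: chain loop group structure}, I would obtain $\overline g = \sum_{k \in I_g^c} s_k \overline \rho_k$ in $(\QQ/\ZZ)^N$. The last step is to observe that the same proposition forces $s_k = 0$ for every $k \in I_g$, so the sum over $I_g^c$ may be freely extended to a sum over all indices $k = 1, \dots, N$ without changing its value. Since $E_f^{-1} = (\overline \rho_1 | \cdots | \overline \rho_N)$, this full sum is exactly the matrix--vector product $E_f^{-1} s^{\mathsf{T}}$, yielding $\overline g = E_f^{-1} s^{\mathsf{T}}$ as claimed.

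There is no genuine obstacle here: the statement is a direct bookkeeping consequence of the definitions. The only points meriting attention are that all equalities of additive vectors are understood modulo $\ZZ^N$, and that extending the summation from $I_g^c$ to the full index set is legitimate precisely because the omitted terms carry coefficient zero.
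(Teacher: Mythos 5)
Your proof is correct and matches the paper's (implicit) justification: the paper states this Corollary without proof, as precisely the definitional bookkeeping you carry out --- $\rho_k = \epi[\overline \rho_k]$ with $\overline \rho_k$ the columns of $E_f^{-1}$, additivity of exponent vectors under multiplication of diagonal symmetries, and $s_k = 0$ for $k \in I_g$ so the sum extends over all indices. Your explicit caveat that the equality is read in $(\QQ/\ZZ)^N$ is the right one; the sharper claim that $E_f^{-1} s^{\mathsf{T}}$ is moreover the distinguished representative with entries in $[0,1)$ is established by the paper only later, in the proof of Proposition~\ref{prop: age by matrix}, using the bounds on $s$ and the block form~\eqref{eq: Ef special form} of $E_f$.
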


\subsection{Diagonal symmetries of a quasihomogeneous singularity}

For any quasihomogeneous singularity $f$, consider its graph decomposition as in equation~\eqref{eq: f decomposition}. Up to the renumbering and rescaling of the variables, we have
\begin{align*}
	& f_0 = x_1^{a_1} \qquad \text{or} \qquad f_0 = x_1^{a_1}x_2 + \dots + x_K^{a_K}x_1,
	\\
	& f_1 = x_1 x_{K+1}^{b_1} + x_{K+1}x_{K+2}^{b_2} + \dots + x_{K+L-1}x_{K+L}^{b_L}
\end{align*}
with the similar expression for $f_2, \dots, f_p$.

Any nontrivial \smash{$g \in G_{f_0}^d$} extends to an element \smash{$\widetilde g \in G_f^{\rm gr}$}. Moreover it follows that $\Fix(g) = 0$ and also $\Fix(\widetilde g) = 0$ as long as $g\ne\id$. Similarly any element \smash{$h \in G_f^{\rm gr}$} with $\Fix(h) = 0$ acts nontrivially on $x_1,\dots,x_K$ preserving $f_0$. Hence it defines \smash{$h_0 \in G_{f_0}^d$} by the restriction.

At the same time any $h \in (\CC^\ast)^{L}$ acting diagonally on $(x_{K+1},\dots,x_{K+L})$ preserving $f_1$ extends to an element of \smash{$G_f^{\rm gr}$} assuming it to act trivially on $f_0$ and all other $f_2,\dots,f_p$. One notes immediately that such elements $h$ are the elements of chain type polynomial symmetry group. Denote the group of all such elements by \smash{$G_{f_1}^\circ$}.

We construct the groups \smash{$G_{f_2}^\circ, \dots, G_{f_p}^\circ$} in a similar way.

For a nontrivial element \smash{$g \in G_f^{\rm gr}$} and its restriction \smash{$g_0 \in G^d_{f_0}$}, the extension $\widetilde g_0$ is not unique. However, having it fixed, we have by Proposition~\ref{prop: group elements on graph} that there is a unique set of \smash{$g_k \in G_{f_k}^\circ$} for $k=1,\dots,p$, s.t.
\[
	g = \widetilde g_0\cdot g_1 \cdots g_p,
\]
We have that every $g_k$ acts non-trivially only on the variables of $f_k$ preserving all the variables of $f_0$ identically.

We have
\[
	|G_f^{\rm gr}| = |G_{f_0}^d|\cdot |G_{f_1}^{\rm gr}| \cdots |G_{f_p}^{\rm gr}|.
\]
Associate to every $g_0,g_1,\dots,g_p$ an element $s_0,s_1,\dots,s_p$ as in Proposition~\ref{proposition: chain loop group structure}. Composing them in one column $s$, we have
\[
	\overline g = E_f^{-1} s^{\mathsf{T}}.
\]
Note that for $s_0 \neq 0$ and $s_1=0, \dots, s_p =0$, all components of $\overline g$ are nonzero. We follow the convention $s_1 \neq 0, \dots, s_p \neq 0$ if $g$ is such that $g_0 \neq \id$.

The following proposition is very important in what follows.

\begin{Proposition}\label{prop: age by matrix}
	For any $g \in G_f^d$ such that $\overline g = E_f^{-1} s$, we have
	\[
	\age(g) = (1,\dots,1) E_f^{-1} s^{\mathsf{T}}.
	\]
\end{Proposition}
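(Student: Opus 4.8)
The plan is to reduce the entire statement to a single claim: that the rational vector $E_f^{-1}s^{\mathsf{T}}$, computed honestly in $\QQ^N$ rather than merely modulo $\ZZ^N$, already has all of its entries in the half-open interval $[0,1)$. Indeed, by definition $\age(g)=\sum_{k=1}^N\beta_k$, where the eigenvalues of $g$ are $\epi[\beta_k]$ with $\beta_k\in[0,1)$; for a diagonal $g$ these $\beta_k$ are exactly the canonical representatives in $[0,1)$ of the entries of $\overline g$. Since $\overline g\equiv E_f^{-1}s^{\mathsf{T}}\pmod{\ZZ^N}$, once we know that every entry of $E_f^{-1}s^{\mathsf{T}}$ lies in $[0,1)$ it must coincide with the corresponding $\beta_k$, and summing the entries gives $\age(g)=(1,\dots,1)\,E_f^{-1}s^{\mathsf{T}}$ immediately. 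So the whole content of the proposition is this interval claim.

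To prove the interval claim I would exploit the block lower-triangular normal form \eqref{eq: Ef special form} of $E_f$, which makes $E_f^{-1}$ block lower-triangular as well and lets me solve $E_f\,\overline g=s^{\mathsf{T}}$ block by block, writing $s=(s_0,s_1,\dots,s_p)$ and $\overline g=(\overline g_0,\overline g_1,\dots,\overline g_p)$ accordingly. First I would treat the top block $A_0\,\overline g_0=s_0$, where $A_0$ is the loop (or Fermat) exponent matrix of $f_0$. Using the explicit inverse of the loop/Fermat matrix from \eqref{eq: chain and loop matrices} together with the constraints on $s_0$ supplied by Proposition~\ref{proposition: chain loop group structure}, I would check that $\overline g_0\in[0,1)^K$, and in fact $\overline g_0\in(0,1)^K$ whenever $g_0\ne\id$. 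Here the excluded values $s_0=(a_1,1,a_3,1,\dots)$ and $(1,a_2,1,\dots)$ in the even loop case are precisely the two configurations for which some entry of $A_0^{-1}s_0$ would hit $0$ or $1$: one verifies this directly on the $N=2$ loop matrix and then in general from the cyclic recursion $t_i=(s_i-t_{i+1})/a_i$ taken around the circle, so that removing them is exactly what forces $\overline g_0$ into the open cube.

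Next I would propagate up each attached tree. Each chain block $A_k$ is bidiagonal, so solving $A_k\,\overline g_k=s_k-U_{i_kj_k}\overline g_0$ amounts to the recursion $t_i=(s_i-t_{\kappa(i)})/b_i$ performed in order of increasing distance from the circle, where $t_{\kappa(i)}$ is the already-computed value on the predecessor vertex $\kappa(i)$ (a root entry of $\overline g_0$ at the first step, via the coupling $U_{i_kj_k}$). Assuming inductively $t_{\kappa(i)}\in[0,1)$ and using $1\le s_i\le b_i$ one gets $t_i\in(0,1]$ at once, so the only thing to rule out is $t_i=1$, which can occur solely when $s_i=b_i$ and $t_{\kappa(i)}=0$. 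There are two regimes. If $g_0\ne\id$, then the root value is strictly positive and Proposition~\ref{prop: group elements on graph} forces $g$ to act on the whole tree, so every predecessor value is strictly positive and the slack $t_i\le(b_i-t_{\kappa(i)})/b_i<1$ applies. If $g_0=\id$, then each nonzero tree part is an honest chain-type element whose first nontrivially-acted variable carries the stricter bound $1\le s_p\le b_p-1$ from Proposition~\ref{proposition: chain loop group structure}, yielding $t_p=s_p/b_p<1$ and hence $t_i\in(0,1)$ downstream. Either way all entries land in $[0,1)$, which proves the claim and with it the proposition.

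I expect the loop block to be the main obstacle: verifying that the two excluded $s$-values in the even case are exactly the boundary configurations, and that no boundary value ever occurs in the odd case, requires the explicit form of the loop inverse (equivalently, a careful analysis of the cyclic recursion around the circle), whereas the tree propagation is a routine induction once the root values have been pinned down.
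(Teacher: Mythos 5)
Your proposal is correct and takes essentially the same route as the paper: the paper's proof is precisely the reduction you make (show every component of $\overline g = E_f^{-1}s^{\mathsf{T}}$ lies in $[0,1)$, then sum), and it asserts that this ``follows immediately'' from $E_f\overline g = s$, the bounds on $s$ from Proposition~\ref{proposition: chain loop group structure}, and the block form~\eqref{eq: Ef special form}. Your block-by-block verification---the cyclic recursion on the loop block, with the excluded even-loop values of $s$ identified as exactly the boundary configurations, followed by the inductive propagation $t_i=(s_i-t_{\kappa(i)})/b_i$ up the trees split into the cases $g_0\ne\id$ and $g_0=\id$---is a correct filling-in of the details the paper leaves implicit.
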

\begin{proof}
	We need to show that the components of $\overline g$ belong to $[0,1)$. This follows immediately from the equality $E_f \overline g = s$, the bounds on $s$ and the special form of the matrix $E_f$ (see equation~\eqref{eq: Ef special form}).
\end{proof}

\subsection{Symmetries and the Calabi--Yau condition}\label{section: symmetries under CY}
Let the reduced weight set $q_1,\dots,q_N$ of $f$ satisfy $\sum_{k=1}^N q_k = 1$. This equality is called the \textit{Calabi--Yau} condition and we will say that \textit{$f$ satisfies the CY condition}. We show in this section that it puts significant restrictions on the symmetries of $f$.

Let the matrix $E_f^{\mathsf{T}}$ define a polynomial $f^{\mathsf{T}}$. Namely, if for $f$ we have~\eqref{eq: poly from graph exponents matrix}, then
\[
	f^{\mathsf{T}} := \sum_{k=1}^N c_k x_1^{E_{1k}} \cdots x_N^{E_{Nk}}.
\]
This polynomial does not necessarily define an isolated singularity. However, it is quasihomogeneous again with some weights $q_1^{\mathsf{T}}, \dots, q_N^{\mathsf{T}}$ by the same argument as in Proposition~\ref{prop: graph exponent matrix is invertible}.

We call $f$ star-shaped if its graph $\Gamma_f$ consists of $N-1$ vertices all adjacent to one vertex. Namely, $f_0 = x_1^{a_1}$, $p=N-1$ and \smash{$f_i = x_1 x_{i+1}^{b_{i+1}}$}. Such a polynomial satisfies the CY condition if and only if
\[
 \sum_{i=2}^N \frac{1}{b_i} = 1.
\]
Example of such a polynomial is given by
\[
 f = x_1^{a_1} + x_1 \bigl(x_2^3 +x_3^3 +x_4^3\bigr) + x_2^2x_3^2+ x_2^2x_4^2+ x_3^2x_4^2
\]
with the Milnor number $81$.

We will treat the star-shaped polynomials separately.

\begin{Proposition}
 Let $f$ not being a star-shaped polynomial, satisfy the CY condition. Then the weights $q_1^{\mathsf{T}}, \dots, q_N^{\mathsf{T}}$ are all positive.
\end{Proposition}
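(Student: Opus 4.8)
The plan is to convert the transpose weights into a recursion on the graph $\Gamma_f$ and then feed in the Calabi--Yau condition through the \emph{ordinary} weights, which are positive by Saito's normalization $0<q_k\le 1/2$. Since $\overline{q^{\mathsf T}}=(E_f^{\mathsf T})^{-1}\boldsymbol 1=(E_f^{-1})^{\mathsf T}\boldsymbol 1$, the $k$-th row of $E_f^{\mathsf T}$ is the $k$-th column of $E_f$, whose entries record the exponent of $x_k$ in each graph monomial: this is $a_k:=E_{kk}$ for the monomial $m_k$ and $1$ for each monomial $m_j$ carrying an arrow $j\to k$. Hence $E_f^{\mathsf T}\,\overline{q^{\mathsf T}}=\boldsymbol 1$ reads
\[
 a_k\,q_k^{\mathsf T}+\sum_{j\to k} q_j^{\mathsf T}=1\qquad\text{for every }k,
\]
the sum running over the incoming arrows of $k$. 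I would also note at once that $\sum_k q_k^{\mathsf T}=\boldsymbol 1^{\mathsf T}(E_f^{-1})^{\mathsf T}\boldsymbol 1=\boldsymbol 1^{\mathsf T}E_f^{-1}\boldsymbol 1=\sum_k q_k=1$, so the CY condition is inherited by the transpose weights.

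The heart of the argument is a dictionary between the two weight systems along each edge. An arrow $j\to k$ means $m_j=x_j^{a_j}x_k$, so the $j$-th row of $E_f$ gives the ordinary relation $a_jq_j+q_k=1$, and since $q_k\le 1/2<1$ this yields $1/a_j=q_j/(1-q_k)$. For a tree component I then induct from the sources: a source $k$ has no incoming arrows, so $q_k^{\mathsf T}=1/a_k>0$, and for a general vertex $q_j^{\mathsf T}=(1-\sum_{i\to j}q_i^{\mathsf T})/a_j\le 1/a_j$ holds once the children are known positive (so $\sum_{i\to j}q_i^{\mathsf T}\ge 0$). Combining the edge dictionary with the recursion,
\[
 \sum_{j\to k} q_j^{\mathsf T}\ \le\ \sum_{j\to k}\frac{1}{a_j}\ =\ \frac{1}{1-q_k}\sum_{j\to k} q_j\ =\ \frac{\Sigma_k}{1-q_k},\qquad \Sigma_k:=\sum_{j\to k} q_j .
\]
By CY and positivity of the ordinary weights, $q_k+\Sigma_k\le\sum_v q_v=1$, so $\Sigma_k\le 1-q_k$ and $\sum_{j\to k}q_j^{\mathsf T}\le 1$; therefore $q_k^{\mathsf T}\ge 0$, with strict positivity unless both inequalities are equalities. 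Equality forces every child of $k$ to be a source and $\{k\}\cup\{\text{children of }k\}$ to exhaust the vertices of positive weight, i.e.\ $\Gamma_f$ is a single star centred at $k$ --- precisely the star-shaped case excluded by hypothesis. This settles every tree component.

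The remaining, and I expect the only delicate, case is a component carrying an oriented circle. Peeling off its trees by the previous induction reduces the problem to the cyclic system $a_z q_z^{\mathsf T}+q_{z-1}^{\mathsf T}=1-c_z$ on the circle, where $c_z\ge 0$ collects the already-positive tree contributions and satisfies $c_z<1$ (the circle has at least two vertices, so $q_z$ together with the weights feeding $c_z$ cannot exhaust the CY budget). The obstacle is that there is no source to seed the induction, and the edge-bound $q_{z-1}^{\mathsf T}\le 1/a_{z-1}$ for the circular child is itself what one is trying to prove. Here I would use that the transpose of a loop is again a loop, whose reduced weights are positive, and treat the inputs $c_z$ as a perturbation; ruling out the borderline value $q_z^{\mathsf T}=0$ is where the isolated-singularity hypothesis must enter, since the degenerate loop-plus-tree configurations producing a vanishing transpose weight are exactly those for which $f$ fails to have an isolated critical point.
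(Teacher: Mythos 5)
Your tree analysis is sound: the recursion $a_k q_k^{\mathsf{T}}+\sum_{j\to k}q_j^{\mathsf{T}}=1$, the edge dictionary $1/a_j=q_j/(1-q_k)$, and the CY-budget argument with equality characterizing the star-shaped case together give a clean, somewhat more conceptual version of what the paper treats as immediate from the block form \eqref{eq: Ef special form}, and your bound $c_z<1$ is exactly the paper's lemma $0\le \A_i<1$, proved there by the same budget argument (the case $S=1$ forcing star-shapedness). But the proposal stops precisely where the substantive work begins: positivity at the circle vertices is sketched, not proved, and the sketch would fail as stated. The cyclic system gives $q^{\mathsf{T}}_{\rm circle}=\bigl(A_0^{\mathsf{T}}\bigr)^{-1}(\boldsymbol{1}-c)$, and the inverse of a loop-type matrix \eqref{eq: chain and loop matrices} has entries of \emph{alternating sign}---in the paper's notation $c_{i,i+r}=(-1)^r a_1\cdots a_K/(a_i\cdots a_{i+r}\det A_0)$ while $c_{i,i-r}=(-1)^{K-r}a_{i-r+1}\cdots a_{i-1}/\det A_0$---so $\bigl(A_0^{\mathsf{T}}\bigr)^{-1}$ is not a nonnegative matrix and the inputs $c_z$ are not small (they may be arbitrarily close to $1$). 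Hence ``positivity of the unperturbed loop weights plus $c_z<1$'' does not constrain the sign of the solution, and no soft perturbation argument can decide it. Closing this gap is the actual content of the paper's proof: it writes $q_i^{\mathsf{T}}=\sum_j c_{ji}-\sum_j c_{ji}\A_j$, uses the CY condition to eliminate $\A_1$ against the other terms (legitimate because $1-c_{11}-\dots-c_{1K}>0$, as $q_1<1$), and then verifies by explicit computation that the resulting coefficients are positive, via the combinatorially defined quantities $T_r$ and $P_r$, which are positive whenever all $a_i\ge 2$.

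Your closing speculation about where the isolated-singularity hypothesis enters is also misdirected. In the paper the borderline value is not excluded by detecting ``degenerate loop-plus-tree configurations'' that violate isolatedness; strictness comes from two purely combinatorial facts, namely the strict inequality $\A_i<1$ (CY plus the non-star-shaped hypothesis, exactly parallel to your tree equality analysis) and the strict positivity of $T_3+P_2-1$ and of $T_i+(-1)^i$ for $a_i\ge 2$. The isolated-singularity hypothesis is used only through the graph structure guaranteed by Proposition~\ref{prop: graph of f} and the bound $a_i\ge 2$, not through any degeneracy analysis at $q_z^{\mathsf{T}}=0$. (Your identity $\sum_k q_k^{\mathsf{T}}=\sum_k q_k$ is correct---it is the paper's equation \eqref{eq: sum of weights}---but it is deployed in the subsequent proposition on ages, not needed for positivity here.)
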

\begin{proof}
 This lemma is obvious for invertible polynomial $f$ and we assume only noninvertible cases in the proof.

 Let $E_f$ be written as in equation~\eqref{eq: Ef special form} and $A_0$ be a $K \times K$ loop type matrix as in equation~\eqref{eq: chain and loop matrices}. It is immediate that $q_{K+1}^{\mathsf{T}},\dots,q_N^{\mathsf{T}}$ are positive.

 For $i=1,\dots,K$, denote by $\A_i$ the sum of all $q_j^{\mathsf{T}}$ with $j > K$, s.t.\ $j$-vertex of $\Gamma_f$ is adjacent to the $i$-th vertex.

 \begin{Lemma}
 We have $0 \le \A_i < 1$ for any $1 \le i \le K$.
 \end{Lemma}
 \begin{proof}
 $\A_i$ is non-negative as the sum of the positive weights. However this sum can be empty.

 Let $\A_i \ge 1$ for some $i$. Let the vertices adjacent to the $i$-th vertex be labelled by $K+1,\dots,K+m$ contributing to $f$ with the monomials \smash{$x_ix_{K+1}^{b_{K+1}}, \dots, x_ix_{K+m}^{b_{K+m}}$}.
 Then
 \[
 q_{K+j} = \frac{1}{b_{K+j}}(1 - q_i) \qquad \text{and} \qquad q_{K+j}^{\mathsf{T}} \le \frac{1}{b_{K+j}}.
 \]
 Denote $S := \sum_{j=1}^m \dfrac{1}{b_{K+j}}$. If the CY condition holds, then
 \[
 q_i + S - q_i S \le 1 \ \Leftrightarrow \ (S - 1) \le q_i(S-1).
 \]
 If $S > 1$, this gives $q_i \ge 1$ which contradicts the quasihomogeneity condition of $f$. If $S=1$, then $q_{K+1} + \dots + q_{K+m} = 1 - q_i$ and the CY condition can only hold if $f$ is a star-shaped CY polynomial.
 \end{proof}

 Let us show that $q_1^{\mathsf{T}}$ is positive. The proof for $q_2^{\mathsf{T}},\dots,q_N^{\mathsf{T}}$ is similar.

 Let $c_{ij}$ stand for the components of the $K \times K$ matrix $A_0^{-1}$. Note that up to a sign these are just the products of $a_i$ divided by $\det A = a_1\cdots a_K + (-1)^{K-1}$.
 In particular, we have
 \begin{align*}
 &c_{i,i} = \frac{a_1 \cdots a_K}{a_i \det A}, \qquad c_{i,i+r} = (-1)^r \frac{a_1 \cdots a_K}{a_i \cdots a_{i+r} \det A}, \qquad 1 \le r \le K-i,
 \\
 &c_{i,i-1} = (-1)^{K-1}\frac{1}{\det A}, \qquad c_{i,i-r} = (-1)^{K-r}\frac{a_{i-r+1}\cdots a_{i-1}}{\det A}, \qquad 1 \le r \le i-1.
 \end{align*}
 Then
 \[
 q_i^{\mathsf{T}} = c_{1i} + \dots + c_{Ki} - c_{1i} \A_1 - \dots - c_{Ki} \A_K
 \]
 for $1 \le i \le K$ and
 \begin{align*}
 & \sum_{i=1}^N q_i \ge \sum_{i=1}^K q_i + \sum_{i=1}^K \A_i (1 - q_i)
 %\\
 %&
 = \sum_{i,j=1}^K c_{ij} + \sum_{i=1}^K \A_i (1 - c_{i1} - \dots - c_{iK}).
 \end{align*}
 Under the CY condition we have
 \[
 \sum_{i,j=1}^K c_{ij} + \sum_{i=2}^K \A_i (1 - c_{i1} - \dots - c_{iK}) - 1 \le - \A_1 (1 - c_{11} - \dots - c_{1K}).
 \]
 The bracket on the right hand side is positive because $q_1 < 1$. This gives the estimate
 \[
 - c_{11}\A_1 \ge
 \frac{c_{11}}{1 - c_{11} - \dots - c_{1K}} \left( \sum_{i,j=1}^K c_{ij} -1 + \sum_{i=2}^K \A_i (1 - c_{i1} - \dots - c_{iK}) \right)
 \]
 because $c_{11}$ is positive. We get then the estimate
 \begin{align*}
 q_1^{\mathsf{T}} \ge{}& c_{11} + \dots + c_{K1} + \frac{c_{11}}{1 - c_{11} - \dots - c_{1K}} \left( \sum_{i,j=1}^K c_{ij} - 1 \right)
 \\
 & + \sum_{i=2}^K \A_i \left( - c_{i1} + c_{11} \frac{1 - c_{i1} - \dots - c_{iK}}{1 - c_{11} - \dots - c_{1K}} \right)
 \\
 ={}& \frac{(c_{11}+\dots + c_{K1}) (1 - c_{11} - \dots - c_{1K}) + c_{11} \bigl( \sum_{ij=1}^K c_{ij} -1\bigr)}{1 - c_{11} - \dots - c_{1K}}
 \\
 & + \sum_{i=2}^K \A_i \frac{c_{11}(1 - c_{i1} - \dots - c_{iK}) - c_{i1} (1 - c_{11} - \dots - c_{1K})}{1 - c_{11} - \dots - c_{1K}}.
 \end{align*}
 Introduce the positive numbers $T_r$ and $P_r$ by
 \begin{align*}
 &T_r = a_{K-1}(\cdots a_{r-2}(a_{r-1}(a_r-1) + 1) -1) \cdots + (-1)^{r-1},
 \\
 &P_{K-1} = a_{K-1}, \qquad P_r = a_r (T_{r+2} + P_{r+1} + (-1)^{r}), \qquad r \le K-2.
 \end{align*}
 Some computations give us
 \begin{align*}
 & (c_{11}+\dots + c_{K1}) (1 - c_{11} - \dots - c_{1K}) + c_{11} \left( \sum_{ij=1}^K c_{ij} -1\right) = \frac{a_K}{\det A} (T_3 + P_2 -1)
 \end{align*}
 and
 \begin{align*}
 &(1 - c_{21} - \dots - c_{2K}) - c_{21} (1 - c_{11} - \dots - c_{1K}) = \frac{a_K}{\det A} (T_2 + 1),
 \\
 &(1 - c_{i1} - \dots - c_{iK}) - c_{i1} (1 - c_{11} - \dots - c_{1K}) = \frac{a_K}{\det A} \left(T_i + (-1)^i\right) \prod_{r=2}^{i-1} a_i, \qquad i \ge 3.
 \end{align*}
 These are positive numbers for $a_i \ge 2$, what gives the proof after applying the lemma above.
\end{proof}

\begin{Proposition}\label{prop: age greater than weights}
 Let $f$ satisfy the CY condition.
	Then for any diagonalizable $g \in \GL_f$ such that $N_g = 0$, we have
	\begin{equation*}
		\age(g) \ge \sum_{k=1}^N q_k.
	\end{equation*}

	The equality is only reached if $g = j_f$.
\end{Proposition}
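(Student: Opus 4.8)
The plan is to reduce the statement to the case of a diagonal symmetry $g\in G_f^d$ and then read off the age from the matrix description in Proposition~\ref{prop: age by matrix}. For a general diagonalizable $g\in\GL_f$ I would first invoke the fact that $g$ preserves the weight subspaces, so $g$ is block diagonal with respect to the weight decomposition of $\CC^N$; diagonalizing each block is a change of basis inside a single weight space and produces a new polynomial with the same weights, the same CY condition, the same grading operator $j_f$ and again an isolated singularity, for which $g$ is now diagonal. The only delicate point is the weight-$1/2$ space, on which $g$ acts orthogonally: there the eigenvalues occur in pairs $\epi[\theta],\epi[1-\theta]$ (together with possible $-1$'s, since $N_g=0$ forbids the eigenvalue $1$), and each pair contributes exactly $1$ to the age, which is precisely the combined weight $\tfrac12+\tfrac12$ of the two associated variables. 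Thus the weight-$1/2$ variables contribute to $\age(g)$ exactly their total weight and may be set aside, while on the remaining variables (all of weight $<1/2$) no quadratic cross terms can appear, so diagonalization preserves the graph structure. Hence I may assume $g\in G_f^d$ with $\overline g=(\beta_1,\dots,\beta_N)$ and $\beta_k\in(0,1)$ for all $k$, the latter because $N_g=0$. Setting $s^{\mathsf{T}}:=E_f\,\overline g$, every entry $s_k=\sum_j E_{kj}\beta_j$ is an integer, since a diagonal $g$ preserves each graph monomial of $f$, and is strictly positive, since every row of $E_f$ carries a positive entry and all $\beta_j>0$; therefore $s_k\ge 1$ for all $k$.

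For $f$ not star-shaped, Proposition~\ref{prop: age by matrix} together with the identity $\boldsymbol 1^{\mathsf{T}}E_f^{-1}=(\overline{q^{\mathsf{T}}})^{\mathsf{T}}$, which follows from $E_f^{\mathsf{T}}\,\overline{q^{\mathsf{T}}}=\boldsymbol 1$, gives
\[
\age(g)=\boldsymbol 1^{\mathsf{T}}E_f^{-1}s^{\mathsf{T}}=\sum_{k=1}^N q_k^{\mathsf{T}}\,s_k .
\]
By the Proposition just proved all transpose weights $q_k^{\mathsf{T}}$ are positive, and since $s_k\ge 1$ this yields $\age(g)\ge\sum_k q_k^{\mathsf{T}}$. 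Finally $\sum_k q_k^{\mathsf{T}}=\boldsymbol 1^{\mathsf{T}}E_f^{-1}\boldsymbol 1=\sum_k q_k=1$ by the CY condition, which proves the inequality. Equality forces $s_k=1$ for every $k$, i.e.\ $s^{\mathsf{T}}=\boldsymbol 1$, hence $\overline g=E_f^{-1}\boldsymbol 1=\overline q$ and $g=j_f$.

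The star-shaped case must be handled separately, since there $q_1^{\mathsf{T}}=0$ and the graph monomials alone no longer pin down $g$. Writing $f=x_1^{a_1}+\sum_{i=2}^N x_1x_i^{b_i}+f_{\rm add}$ with $\sum_{i\ge 2}1/b_i=1$, invariance of the graph monomials gives $\beta_1=m_1/a_1$ and $\beta_i=(n_i-\beta_1)/b_i$ with $m_1\in\{1,\dots,a_1-1\}$ and $n_i\in\{1,\dots,b_i\}$, and a short computation using the CY relation yields $\age(g)=\sum_{i\ge 2}n_i/b_i\ge 1$, with equality exactly when $n_i=1$ for all $i\ge 2$. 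To force $g=j_f$ in the equality case I would exploit the isolated-singularity hypothesis: if every monomial of $f_{\rm add}$ were divisible by $x_1$, then $\partial f/\partial x_i$ would vanish identically on $\{x_1=0\}$ for all $i\ge 2$, leaving a nonisolated critical locus; hence $f_{\rm add}$ contains a monomial $\prod_{i\ge 2}x_i^{e_i}$ in the leaf variables alone. Its invariance under $g$ reads $\sum_i e_i\beta_i\in\ZZ$, which in the equality case simplifies to $(a_1-m_1)/(a_1-1)\in\ZZ$, and together with $1\le m_1\le a_1-1$ this forces $m_1=1$, i.e.\ $\beta_1=q_1$ and $g=j_f$.

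I expect the main obstacle to be exactly this equality analysis in the star-shaped case. The inequality itself is immediate once one has positivity of the transpose weights, but in the star-shaped situation the graph monomials leave the exponent $\beta_1$ completely unconstrained, so one must extract genuine information from the non-graph part $f_{\rm add}$, whose only available handle is the isolated-singularity hypothesis. A secondary subtle point is the reduction of an arbitrary diagonalizable $g$ to a diagonal one, specifically the bookkeeping on the weight-$1/2$ subspace, where diagonalizing $g$ can destroy the normal form of $f$; the observation that such variables contribute to the age exactly their weight is what keeps that reduction clean.
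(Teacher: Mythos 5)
Your proposal is correct and follows essentially the same route as the paper's proof: reduce to a diagonal $g$, write $\age(g)=(1,\dots,1)E_f^{-1}s^{\mathsf{T}}=\sum_k s_kq_k^{\mathsf{T}}$ with integer $s_k\ge 1$, invoke positivity of the transpose weights together with $\sum_k q_k^{\mathsf{T}}=\sum_k q_k=1$, and handle the star-shaped case separately through a monomial of $f_{\rm add}$ in the leaf variables alone, landing on the same arithmetic $(a_1-s_1)/(a_1-1)\in\ZZ\Rightarrow s_1=1$. You even supply two details the paper merely asserts (that isolatedness forces a monomial not divisible by $x_1$, and the direct derivation $s_k\ge 1$ from $\beta_j\in(0,1)$ plus positivity of each row of $E_f$); the one caution is that your weight-$1/2$ step, read literally as ``setting aside'' those variables, would leave a remainder polynomial that no longer satisfies CY, so the positivity proposition would not apply to it---but this is moot, since CY with positive weights (and $N\ge 3$) allows at most one weight-$1/2$ variable, on which $g$ acts by $-1$ and which can simply be kept inside $f$ as a Fermat component $x_k^2$, after which your argument runs verbatim on the full $E_f$.
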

\begin{proof}
Rewrite $f$ in the coordinates $\widetilde x_1,\dots,\widetilde x_N$ dual to the basis diagonalizing $g$. Then each~$\widetilde x_k$ is a linear combination of $x_1,\dots,x_N$. Moreover, one can renumber the new variables such that the weight of $\widetilde x_k$ is the same as the weight of $x_k$, namely $q_k$.

The element $j_f$ is represented in the old and the new basis by the same diagonal matrix. The given element $g$ acts of $\widetilde x_k$ just by a rescaling. Therefore it is enough to show the proposition for~$g$ belonging to the maximal group of diagonal symmetries.

To prove the propositions for $g \in G_f^d$ it is enough to prove the inequality for any $g \in G_f^{\rm gr}$ with~$N_g=0$ and $f$, such that the graph $\Gamma_f$ has only one connected component.

We have
\begin{equation}\label{eq: sum of weights}
	 \sum_{k=1}^N q_k = (1,\dots,1) E_f^{-1} \boldsymbol{1} = (1,\dots,1) \bigl(E_f^{\mathsf{T}}\bigr)^{-1} \boldsymbol{1} = \sum_{k=1}^N q_k^{\mathsf{T}}.
\end{equation}

For a given $g$ assume $s$, such that $\overline g = E_f^{-1}s$ as in Proposition~\ref{prop: age by matrix}. None of $s_k =0$ because~$N_g=0$. We have
\begin{align*}
	\age(g) &= ( (1,\dots,1) E_f^{-1} s )^{\mathsf{T}} = s^{\mathsf{T}} \bigl(E_f^{\mathsf{T}}\bigr)^{-1} \boldsymbol{1} = \sum_{k=1}^N s_k q_k^{\mathsf{T}}.
\end{align*}

First assume $f$ is not star-shaped. Then
\[
 \sum_{k=1}^N s_k q_k^{\mathsf{T}} \ge \sum_{k=1}^N q_k^{\mathsf{T}}
\]
because every $s_k \ge 1$ and $q_i^{\mathsf{T}}$ are all positive. Combining with equation~\eqref{eq: sum of weights} we get the inequality claimed. Moreover it is obvious that the equality is only reached if $s_k=1$ for all $k$. This is equivalent to the fact that $g = j_f$.

Now let $f$ be star-shaped. We have $q_1^{\mathsf{T}} = 0$ and $q_k^{\mathsf{T}} > 0$ for $k=2,\dots,N$. By the same reasoning as above it is enough to consider $\overline g = E_f^{-1}s$ with $s_2 = \dots = s_N = 1$. Then
\[
 \overline g = \left(\frac{s_1}{a_1}, \frac{1}{b_2}\frac{a_1-s_1}{a_1}, \dots, \frac{1}{b_{N}}\frac{a_1-s_1}{a_1} \right)
\]
for some $s_1 = 1,\dots,a_1-1$.
If $f$ defines an isolated singularity, it should have at least one summand $x_2^{r_2}\cdots x_N^{r_N}$ for some nonnegative $r_2,\dots,r_N$. The quasihomogeneity and the $g$-invariance conditions on this summand give
\[
 \sum_{k=2}^N \frac{r_k}{b_k}\left(1 - \frac{1}{a_1}\right) = 1, \qquad \sum_{k=2}^N \frac{r_k}{b_k}\frac{a_1-s_1}{a_1} \in \ZZ_{\ge 1}.
\]
These two conditions can only hold when $s_1=1$.
\end{proof}

\begin{Remark}
The proposition above holds for any invertible polynomial without the Calabi--Yau condition too. However for noninvertible polynomial without Calabi--Yau condition the proposition does not hold in general. If particular for ${f = x_1^{10} + x_1\bigl(x_2^2 + x_3^2 + x_4^2\bigr)}$ and $\overline g = \bigl(\frac{1}{5},\frac{2}{5},\frac{2}{5},\frac{2}{5}\bigr)^{\mathsf{T}}$ we have $\overline q = \bigl(\frac{1}{10},\frac{9}{20},\frac{9}{20},\frac{9}{20}\bigr)$ and $\age(g) - \sum_{k=1}^4 q_k = -\frac{1}{20}$.
\end{Remark}

\section{The total space}\label{section: total space}
Consider the quotient ring
\begin{equation*}
	\Jac(f) := \QR{\CC[x_1,\dots, x_N]}{\bigl(\frac{\partial f}{\partial x_1},\dots,\frac{\partial f}{\partial x_N}\bigr)}.
\end{equation*}
It is a finite-dimensional $\CC$-vector space whenever $f$ defines an isolated singularity.
Call it {\em Jacobian algebra} of $f$ and set $\mu_f:=\dim_\CC\Jac(f)$~-- the Milnor number of $f$.

We will assume an additional convention:
for the constant function $f = 0$ set $\Jac(f) := \CC$, $\mu_f := 1$.

\subsection{Grading}

The reduced weights $q_1,\dots,q_N$ of $f$ define the $\QQ$-grading on $\CC[x_1,\dots,x_N]$.
Introduce the $\QQ$-grading on $\Jac(f)$ by setting
\[
	\deg\bigl(\bigl[x_1^{\alpha_1}\cdots x_N^{\alpha_N}\bigr]\bigr) := \alpha_1q_1 + \dots + \alpha_Nq_N.
\]
Let $\phi_1,\dots,\phi_\mu$ be the classes of monomials, generating $\Jac(f)$ as a $\CC$-vector space.
We say that $X \in \Jac(f)$ is of degree $\kappa$ if it is expressed as a $\CC$-linear combination of degree $\kappa$ elements $\phi_\bullet$.

Denote by $\Jac(f)_\kappa$ the linear subspace of $\Jac(f)$ spanned by the degree $\kappa$ elements. Let the {\em Hessian} of~$f$ be defined as the following determinant:
\begin{equation*}
\hess(f):=\det \left(\frac{\partial^2 f}{\partial x_{i} \partial x_{j}}\right)_{i,j=1,\dots,N}.
\end{equation*}
Its class is nonzero in $\Jac(f)$.

\begin{Proposition}\label{prop: top degree}
	The maximal degree of a $\Jac(f)$-element is ${\widehat c}={\widehat c}(f) := \sum_{k=1}^N (1 - 2 q_k)$. Moreover we have
	\[
		\Jac(f)_{\widehat c} = \CC \langle [\hess(f)] \rangle.
	\]
\end{Proposition}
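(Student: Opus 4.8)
The plan is to recognize $\Jac(f)$ as a graded Artinian complete intersection and to read off its top-degree piece from the Hilbert series. First I would fix gradings: passing to the canonical integer weights $(d_0,d_1,\dots,d_N)$ of Proposition~\ref{prop: graph exponent matrix is invertible}, grade $\CC[x_1,\dots,x_N]$ by $\deg x_i = d_i$, so that $f$ is homogeneous of degree $d_0$ and each $\partial_i f$ is homogeneous of degree $d_0 - d_i$; dividing degrees by $d_0$ recovers the $\QQ$-grading of the statement. The hypothesis that $f$ defines an isolated singularity means the common zero locus $V(\partial_1 f,\dots,\partial_N f)$ is $\{0\}$, of codimension $N$ in $\CC^N$. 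Since $\CC[x_1,\dots,x_N]$ is Cohen--Macaulay of dimension $N$, the $N$ homogeneous elements $\partial_1 f,\dots,\partial_N f$, whose ideal has height $N$, form a homogeneous system of parameters and hence a regular sequence; so $\Jac(f)$ is a graded complete intersection, in particular a finite-dimensional graded Gorenstein algebra.

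Next I would compute the Hilbert series. Because the $\partial_i f$ form a regular sequence, the Hilbert series of $\Jac(f)$ in the integer grading is
\[
	H(t) = \prod_{i=1}^N \frac{1 - t^{d_0 - d_i}}{1 - t^{d_i}},
\]
a genuine polynomial since the algebra is Artinian. Its degree equals $\sum_i (d_0 - d_i) - \sum_i d_i = N d_0 - 2\sum_i d_i$, which after dividing by $d_0$ is exactly $\sum_{k=1}^N (1 - 2 q_k) = \widehat{c}$; this identifies $\widehat{c}$ as the maximal degree occurring in $\Jac(f)$. The symmetry $H(1/t) = t^{-(N d_0 - 2 \sum_i d_i)} H(t)$ is immediate from the product formula, so $H$ is palindromic; combined with $H(0) = \dim \Jac(f)_0 = 1$ this forces the top coefficient to equal $1$, i.e.\ $\dim \Jac(f)_{\widehat c} = 1$. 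This is the Gorenstein/Poincar\'e-duality manifestation of the complete-intersection structure.

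It then remains to identify this one-dimensional socle with the Hessian. Expanding $\hess(f) = \det(\partial_i \partial_j f)$ as a sum over permutations, every term $\prod_i \partial_i \partial_{\sigma(i)} f$ is homogeneous of reduced degree $\sum_i (1 - q_i - q_{\sigma(i)}) = N - 2\sum_i q_i = \widehat{c}$, using that $\sum_i q_{\sigma(i)} = \sum_i q_i$; hence $[\hess(f)] \in \Jac(f)_{\widehat c}$. Since $[\hess(f)] \neq 0$ was already noted in the text preceding the statement, and $\Jac(f)_{\widehat c}$ is one-dimensional, the class $[\hess(f)]$ spans it, giving $\Jac(f)_{\widehat c} = \CC \langle [\hess(f)] \rangle$.

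The only genuinely substantive input is the regular-sequence property of $\partial_1 f,\dots,\partial_N f$, which I expect to be the main point to justify carefully (via the isolated-singularity hypothesis and the Cohen--Macaulay property of the polynomial ring); everything downstream---the product formula for $H(t)$, the palindrome symmetry yielding both the top degree and one-dimensionality, and the degree count for the Hessian---is a direct computation. The nonvanishing of $[\hess(f)]$, the other delicate fact, is taken as given from the remark immediately before the statement.
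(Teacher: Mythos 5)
Your proof is correct, but it takes a genuinely different route from the paper: the paper offers no argument of its own and simply cites \cite[Section~II]{AGV85}, where these facts are established in the singularity-theory framework (Milnor algebra and residue pairing in the analytic category). You instead give a self-contained commutative-algebra proof: the isolated-singularity hypothesis makes the Jacobian ideal have height $N$, so $\partial_1 f,\dots,\partial_N f$ form a regular sequence in the Cohen--Macaulay ring $\CC[x_1,\dots,x_N]$; the complete-intersection structure yields the Hilbert series $\prod_{i}\bigl(1-t^{d_0-d_i}\bigr)/\bigl(1-t^{d_i}\bigr)$, whose degree $Nd_0-2\sum_i d_i$ recovers $\widehat{c}$ after dividing by $d_0$, and whose palindromy (the functional equation $H(1/t)=t^{-(Nd_0-2\sum_i d_i)}H(t)$ holds as rational functions, so $c_k=c_{D-k}$ and in particular $c_D=c_0=1$) gives $\dim\Jac(f)_{\widehat c}=1$; the permutation-expansion degree count then places $[\hess(f)]$ in the top piece. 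This buys transparency and elementariness: both the value of $\widehat c$ and the one-dimensionality of the socle drop out of one product formula, with no analytic input. What it does not buy is the one genuinely delicate fact, the nonvanishing of $[\hess(f)]$ in $\Jac(f)$, which you correctly flag and import from the paper's remark immediately preceding the statement; note that this is fair game but is exactly where the hard content lives (classically it is proved via the residue pairing, i.e., the same \cite{AGV85} machinery the paper cites), so your argument legitimately reduces ``spans the top piece'' to ``is nonzero'' but does not replace that input. All the individual steps check out: the height-$N$/regular-sequence implication is standard, the degree bookkeeping $\sum_i(1-q_i-q_{\sigma(i)})=N-2\sum_i q_i$ is right, and your palindromy argument is airtight since the functional equation forces the degree not to drop below $Nd_0-2\sum_i d_i$.
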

\begin{proof}
 See \cite[Section II]{AGV85}.
\end{proof}

\subsection{Pairing}
The algebra $\Jac(f)$ can be endowed with the $\CC$-bilinear nondegenerate pairing $\eta_f$ called \textit{residue pairing} (see \cite[Chapter~5]{GH94}, \cite[Section~5.11]{AGV85}).
The value $\eta_f([u],[v])$ is taken as the projection of $[u][v]$ to the top graded component $\Jac(f)_{\widehat c}$ divided by its generator $[\hess(f)]$. In particular, we have $\eta_f([1],[\hess(f)]) = 1$.

\begin{Proposition}\label{prop: grading of Jac(f)}

For any $\beta$, such that $0 \le \beta \le {\widehat c}$ the perfect pairing $\eta_f$ induces an equivalence
\begin{gather*}
\phi_{f,\beta}\colon\ \Jac(f)_{\beta} \cong (\Jac(f)_{{\widehat c} - \beta})^\vee,\qquad
[p]\mapsto \eta_f([p],-),
\end{gather*}
where $(-)^\vee$ stands for the dual vector space.
\end{Proposition}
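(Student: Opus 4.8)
The plan is to deduce everything from two facts established earlier: the residue pairing $\eta_f$ is nondegenerate on all of $\Jac(f)$, and by Proposition~\ref{prop: top degree} the top graded piece $\Jac(f)_{\widehat c}$ is one-dimensional, spanned by $[\hess(f)]$, with ${\widehat c}$ the unique maximal degree. The first thing I would verify is that $\eta_f$ is compatible with the grading, in the sense that
\[
	\eta_f\bigl(\Jac(f)_\beta, \Jac(f)_\gamma\bigr) = 0 \qquad \text{whenever}\quad \beta + \gamma \neq {\widehat c}.
\]
Indeed, for $[p]$ of degree $\beta$ and $[v]$ of degree $\gamma$ the product $[p][v]$ has degree $\beta+\gamma$; if $\beta+\gamma \neq {\widehat c}$, then its component in $\Jac(f)_{\widehat c}$ vanishes, and since $\eta_f([p],[v])$ is by definition this component divided by $[\hess(f)]$, it is zero. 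This immediately shows that $\eta_f([p],-)$ annihilates every graded piece except $\Jac(f)_{{\widehat c}-\beta}$, so that $\phi_{f,\beta}([p])$, the restriction of $\eta_f([p],-)$ to $\Jac(f)_{{\widehat c}-\beta}$, is a well-defined element of $(\Jac(f)_{{\widehat c}-\beta})^\vee$, and $\phi_{f,\beta}$ is linear.

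Next I would establish injectivity of $\phi_{f,\beta}$ from the nondegeneracy of $\eta_f$. Take a nonzero $[p]\in\Jac(f)_\beta$. Nondegeneracy provides some $[v]\in\Jac(f)$ with $\eta_f([p],[v])\neq 0$; decomposing $[v] = \sum_\gamma [v_\gamma]$ into its graded components and using the compatibility just proved, all terms $\eta_f([p],[v_\gamma])$ vanish except the one with $\gamma = {\widehat c}-\beta$. Hence $\eta_f\bigl([p],[v_{{\widehat c}-\beta}]\bigr)\neq 0$, so the functional $\phi_{f,\beta}([p])$ is nonzero on $\Jac(f)_{{\widehat c}-\beta}$. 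Thus $\phi_{f,\beta}$ is injective, giving $\dim\Jac(f)_\beta \le \dim\Jac(f)_{{\widehat c}-\beta}$.

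Finally, I would run the same argument with $\beta$ replaced by ${\widehat c}-\beta$: injectivity of $\phi_{f,{\widehat c}-\beta}$ yields the reverse inequality $\dim\Jac(f)_{{\widehat c}-\beta}\le\dim\Jac(f)_\beta$, so the two dimensions coincide. An injective linear map between finite-dimensional vector spaces of equal dimension is an isomorphism, whence $\phi_{f,\beta}$ is the claimed equivalence. The whole argument is essentially formal linear algebra once the grading compatibility is in place, so I do not anticipate a serious obstacle; the only genuine inputs are the nondegeneracy of the residue pairing and the one-dimensionality of $\Jac(f)_{\widehat c}$. The single point I would state carefully is that distinct graded pieces are $\eta_f$-orthogonal except for complementary degrees, since this is exactly what lets the global nondegeneracy restrict to a perfect pairing on each pair $\bigl(\Jac(f)_\beta,\Jac(f)_{{\widehat c}-\beta}\bigr)$.
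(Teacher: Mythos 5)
Your proof is correct. Note that the paper itself does not argue this proposition at all: its ``proof'' is a citation to \cite[Section~II]{AGV85}, where the graded duality of the Milnor algebra of a quasihomogeneous singularity is established via the classical local duality/residue theory. Your route is the natural complement: you take as inputs exactly the two facts the paper imports before the proposition --- the nondegeneracy of $\eta_f$ on all of $\Jac(f)$ (stated with references to \cite{GH94, AGV85}) and the one-dimensionality of $\Jac(f)_{\widehat c}$ from Proposition~\ref{prop: top degree} --- and then derive the graded statement by formal linear algebra: orthogonality of non-complementary graded pieces (immediate from the definition of $\eta_f$ as the top-degree component of the product), injectivity of $\phi_{f,\beta}$ from global nondegeneracy, and the dimension count $\dim\Jac(f)_\beta = \dim\Jac(f)_{\widehat c - \beta}$ by running the argument symmetrically. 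This buys a self-contained deduction within the paper's logical structure, at the cost of resting on the cited nondegeneracy rather than reproving it (which is where the actual analytic/algebraic content of \cite{AGV85} lives). The only ingredient you use silently is that the Jacobian ideal is homogeneous --- the partials of a quasihomogeneous $f$ are themselves quasihomogeneous --- so that $\Jac(f)$ is a graded ring and the product $[p][v]$ is genuinely of degree $\beta+\gamma$; this is immediate from the setup, but it is the step on which your orthogonality claim depends, so it is worth one sentence.
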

\begin{proof}
 See \cite[Section II]{AGV85}.
\end{proof}

\subsection{The total space}
For each $g\in \GL_f$, fix a generator of a one-dimensional vector space $\Lambda(g):=\bigwedge^{d_g}\bigl(\CC^N/\Fix(g)\bigr)$. Denote it by $\xi_g$.

For $g\in G_f^d$, it is standard to choose the generator to be the wedge product of $x_k$ with $k\in I^c_g$ taken in increasing order.

Define $\B_{\rm tot}(f)$ as the $\CC$-vector spaces of dimension $\sum_{g \in \GL_f} \dim \Jac(f^g)$
\begin{equation}\label{eq: Btot in jac sectors}
	\B_{\rm tot}(f) := \bigoplus_{g \in \GL_f} \Jac(f^g) \xi_g.
\end{equation}
Each direct summand $\Jac(f^g) \xi_g$ will be called the \textit{$g$-th sector}.
We will write just $\B_{\rm tot}$ when the polynomial is clear from the context.

\begin{Remark}
Note that for $g,h \in G$, such that $\Fix(g) = \Fix(h)$, we have $f^g = f^h$. Then $\Jac(f^g) = \Jac(f^h)$, but the formal letters $\xi_g \neq \xi_h$ help to distinguish $\Jac(f^g) \xi_g$ and $\Jac(f^h)\xi_h$, such that $\Jac(f^g) \xi_g \oplus \Jac(f^h)\xi_h$ is indeed a direct sum of dimension $\dim\Jac(f^g)+\dim\Jac(f^h)$.
\end{Remark}

\subsection{B-model group action}

Note that an element $h\in\GL_f$ induces a map
\[h\colon \ \Fix(g)\to \Fix\bigl(hgh^{-1}\bigr)\qquad \text{and hence}\qquad h\colon\ \Lambda(g)\to \Lambda\bigl(hgh^{-1}\bigr).\] Since we have fixed the generators $\xi_\bullet$, the latter map provides a constant $\rho_{h,g}\in\CC^*$ such that $h ( \xi_{g} ) = \rho_{h,g} \xi_{hgh^{-1}}$. We have
\begin{equation}\label{eq: rho relation}
 \rho_{h_2,h_1gh_1^{-1}}\rho_{h_1,g}=\rho_{h_2h_1,g}.
\end{equation}

Note, that if $g,h\in G_f^d$ or, more generally, if $g$ and $h$ commute $\rho_{h,g}$ is independent of the choice of the generators since $g=hgh^{-1}$. More precisely, in this case it could be computed as follows. Let $\lambda_k$, $\lambda'_k$ be the eigenvalues of $h$ and $g$ in their common eigenbasis, then
\begin{equation*}%\label{eq: group action}
 \rho_{h,g}=\prod_{\substack{ k=1,\dots,N \\ \lambda'_k \neq 1}} \lambda_k.
\end{equation*}

We define the action of $\GL_f$ on $\B_{\rm tot}$ by
\begin{align*}
h^*([p(\bx)]\xi_{g})=\rho_{h,g}\bigl[p\bigl(h^{-1}\cdot\bx\bigl)\bigr]\xi_{hgh^{-1}}.
\end{align*}
This is indeed a group action, i.e., $(h_2h_1)^* = h_2^* \cdot h_1^*$. Indeed, using equation~\eqref{eq: rho relation} we get
\begin{align*}
(h_2h_1)^*[p(\bx)]\xi_{g} &=\rho_{h_2h_1,g}\bigl[p\bigl((h_2h_1)^{-1}\cdot\bx\bigr)\bigr]\xi_{(h_2h_1)g(h_2h_1)^{-1}}
\\ &=\rho_{h_2,h_1gh_1^{-1}}\rho_{h_1,g}\bigl[p\bigl(h_1^{-1}h_2^{-1}\cdot\bx\bigr)\bigr]\xi_{h_2h_1gh_1^{-1}h_2^{-1}}
\\ &=h_2^*\bigl(\rho_{h_1,g}\bigl[p\bigl(h_1^{-1}\cdot\bx\bigl)\bigl]\xi_{h_1gh_1^{-1}}\bigl)=h_2^*h_1^*([p(\bx)]\xi_{g}).
\end{align*}

Note that, in particular, if $g,h\in G_f^d$ then $h$ acts on $\xi_g$ by
\begin{equation*}%\label{G-action2}
	h\colon\ \xi_g\mapsto h^{*} (\xi_g) := \prod_{k\in I_g^c} h_k\,\cdot \xi_g.
\end{equation*}

\begin{Example}\label{ex: J--action and bigrading}
Because $I_\id^c = I_{j_f} = \varnothing$, we have
\[
 h^*(\xi_\id) = \xi_\id \qquad \text{and} \qquad h^*(\xi_{j_f}) = \det(h) \xi_{j_f}
\]
for any $h \in \GL_f$.
Similarly for any $[p]\xi_g$ with a homogeneous $p \in \CC[x_1,\dots,x_N]$ and $g\in \GL_f$ we have
\[
(j_f)^*([p]\xi_g) = \epi\biggl[-\deg(p) + \sum_{k \in I_g^c} q_k\biggr] \cdot [p]\xi_g.
\]
\end{Example}

For a finite $G \subseteq \GL_f$ put
\[\B_{{\rm tot},G}:=\bigoplus_{g \in G} \Jac(f^g) \xi_g\subset \B_{\rm tot}\]
and define the \textit{B-model state space} $\B(f,G)$ by
$\B(f,G) := ( \B_{{\rm tot},G} )^G$. Namely, the linear span of the $\B_{\rm tot}$ vectors that are invariant with respect to the action of all elements of~$G$.

\begin{Remark}
 In the literature (see, for example, \cite{Muk}) a different definition could be found where the sum is taken over the representatives of the conjugacy classes of $G$ and the invariants in each sector are taken with respect to the centralizer of the corresponding~$g$. The two definitions are in fact equivalent in the same way as in \cite[Proposition~42]{BI22}.
\end{Remark}

\begin{Example}\label{example: Fermat type state space}
	Let $f = x_1^{a_1}$~-- the Fermat type polynomial. Assume $a_1 = rm$ and consider $G$ to be generated by $g = (1/r)$. We have
	\begin{align*}
		\B_{\rm tot} &= \CC \bigl\langle [1]\xi_{\id}, [x_1]\xi_{\id}, \dots,\bigl[x_1^{rm-2}\bigr] \xi_\id \bigr\rangle \oplus \CC\langle [1]\xi_{g},\dots,[1]\xi_{g^{r-1}} \rangle.
	\end{align*}
	Because $I_g^c = I_{g^k}^c = \{1\}$, we have
	\[
		 \bigl(g^k \bigr)^*(\xi_{g^l}) = \exp\left( 2 \pi {\rm i} \cdot \frac{k}{r}\right) \xi_{g^l}.
	\]
	However $\bigl(g^k\bigr)^*\bigl(\bigl[x_1^l\bigr]\bigr) = \exp\bigl(-2 \pi {\rm i} \cdot \frac{kl}{r}\bigr) \bigl[x_1^l\bigr]$ and the $G$-invariant monomials are $x_1^{rn}$ with $n \in \ZZ$. This gives
	\begin{align*}
		\B(f,G) &= \CC \bigl\langle [1]\xi_{\id},[x_1^r]\xi_{\id}, \dots, \bigl[x_1^{r(m-1)}\bigr]\xi_{\id}\bigr\rangle.
	\end{align*}
\end{Example}

\subsection{Bigrading}\label{section: bigrading}
The following operators $q_l, q_r \colon \B_{\rm tot} \to \QQ$ were first introduced in \cite{IV90} giving the bigrading we use.

For any homogeneous $p \in \CC[x_1,\dots,x_N]$ define for $[p]\xi_g$ its {\itshape left charge} $q_l$ and {\itshape right charge} $q_r$ to be
\begin{equation}\label{eq: left right charges}
 (q_l, q_r) = \biggl(\deg p - \sum_{k \in I_g^c} q_{k} + \age{(g)}, \deg p - \sum_{k \in I_g^c} q_{k} + \age{\bigl(g^{-1}\bigr)} \biggr).
\end{equation}
This definition endows $\B_{\rm tot}$ with the structure of a $\QQ$-bigraded vector space.
For $u,v\in G_f^d$ it follows immediately that $q_\bullet (\xi_u) + q_\bullet (\xi_v) = q_\bullet (\xi_{uv})$ for $u,v\in G$, such that $I_u^c \cap I_v^c = \varnothing$.

This bigrading restricts to $\B(f,G)$ because $q_l$, $q_r$ commute with the action of $h^\ast$ for any $h \in \GL_f$, $h$ preserves the weights and $\age(g)=\age\bigl(hgh^{-1}\bigr)$.

\section{Hodge diamond of LG orbifolds}
Assume $N \ge 3$ and the reduced weight set of $f$ to satisfy the CY condition ${\sum_{k=1}^N q_k = 1}$ (see also Section~\ref{section: symmetries under CY}).

\begin{Proposition}
	For $f$ satisfying CY condition and $G$, such that $J \subseteq G \subseteq \SL_f$ both left and right charges $q_l$ and $q_r$ of any $Y \in \B(f,G)$ are integer.
\end{Proposition}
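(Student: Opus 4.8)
The plan is to show that for a basis element $[p]\xi_g \in \B(f,G)$ the charges $q_l$ and $q_r$ from equation~\eqref{eq: left right charges} are integers, and then conclude for arbitrary $Y$ by linearity (the charges are well-defined on homogeneous elements and $\B(f,G)$ is spanned by such). Since $q_l - q_r = \age(g) - \age(g^{-1})$, and $\age(g)+\age(g^{-1}) = d_g \in \ZZ$ by equation~\eqref{eq: age g and inverse}, it suffices to prove that $q_l + q_r \in \ZZ$ and that $q_l - q_r \in \ZZ$; the latter reduces to showing $2\,\age(g) \in \ZZ$, while the former is
\[
q_l + q_r = 2\deg p - 2\sum_{k\in I_g^c} q_k + \age(g) + \age\bigl(g^{-1}\bigr).
\]

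First I would use the two hypotheses $G \subseteq \SL_f$ and $J \subseteq G$ directly. Because $g \in \SL_f$, Proposition~\ref{prop: integral age}(3) gives $\age(g)\in\NN$, and likewise $\age(g^{-1})\in\NN$; so the difference $q_l-q_r=\age(g)-\age(g^{-1})$ is already an integer, and the age terms in $q_l+q_r$ contribute an integer. The remaining task is therefore to show that
\[
2\deg p - 2\sum_{k\in I_g^c} q_k \in \ZZ
\]
for any $G$-invariant basis vector $[p]\xi_g$. This is where the invariance condition and the presence of $J$ enter.

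The key step is the computation in Example~\ref{ex: J--action and bigrading}: acting by the central element $j_f \in J \subseteq G$ on $[p]\xi_g$ yields the scalar
\[
\epi\biggl[-\deg p + \sum_{k\in I_g^c} q_k\biggr].
\]
Since $[p]\xi_g$ lies in $\B(f,G) = (\B_{{\rm tot},G})^G$, it is in particular invariant under $j_f^\ast$, so this scalar must equal $1$, forcing
\[
-\deg p + \sum_{k\in I_g^c} q_k \in \ZZ.
\]
Substituting into $q_l$ and $q_r$ shows each equals an integer plus $\age(g)$ or $\age(g^{-1})$ respectively, both of which are integers by the $\SL_f$ condition. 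The main obstacle is really just assembling these two inputs correctly — confirming that the $j_f$-invariance character computed in Example~\ref{ex: J--action and bigrading} applies to every homogeneous summand and that the age-integrality from Proposition~\ref{prop: integral age}(3) covers both $g$ and $g^{-1}$; once both are in hand the conclusion for general $Y$ follows by decomposing $Y$ into its homogeneous sector-by-sector pieces.
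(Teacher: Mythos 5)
Your proof is correct and follows essentially the same route as the paper: $j_f$-invariance of a class in $\B(f,G)$ forces $\epi\bigl[-\deg p + \sum_{k\in I_g^c} q_k\bigr]=1$, so $\deg p - \sum_{k\in I_g^c} q_k \in \ZZ$, and then $q_l$ and $q_r$ are this integer plus $\age(g)$, respectively $\age\bigl(g^{-1}\bigr)$, both integral by Proposition~\ref{prop: integral age}(3) since $G\subseteq\SL_f$; the paper merely packages the second half as the reduction $q_r = q_l + (N-N_g) - 2\age(g)$ instead of treating $q_r$ directly. One caution: your opening claim that it ``suffices to prove $q_l+q_r\in\ZZ$ and $q_l-q_r\in\ZZ$'' is false as stated (both hold when $q_l,q_r\in\frac12+\ZZ$, giving only $2q_l\in\ZZ$), but your final argument never uses that framing, so the proof as actually executed is sound.
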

\begin{proof}
	Note that $q_r([p]\xi_g) = q_l([p]\xi_g) + (N-N_g) - 2 \age(g)$. Due to ${\rm age}(g)\in\ZZ$ the right charge~$q_r([p]\xi_g)$ is integral if and only if $q_l([p]\xi_g)$ is integral.
 It remains to recall that
 \[
 (j_f)^*([p]\xi_g)=\epi\biggl[-\deg(p) + \sum_{k \in I_g^c} q_k\biggr]\cdot [p]\xi_g
 \]
 by Example~\ref{ex: J--action and bigrading}. Hence for a class in $\B(f,G)$ we have $\epi[-q_l+\age(g)]=1$ and so $q_l$ is integer.
\end{proof}

The following two propositions state that the graded pieces of $\B(f,G)$ are organized into a~diamond when CY condition holds.

\begin{Proposition}\label{proposition: diamond under CY}
	Let $f$ be a quasihomogeneous polynomial satisfying the CY condition, let $G \subseteq \SL_f$ be a finite subgroup, and let $V^{a,b}$ stand for the bidegree $(a,b)$-subspace of $\B(f,G)$. We have
	\begin{enumerate}\itemsep=0pt
	 \item[$(i)$] $V^{a,b} = 0$ for $a < 0$ or $b < 0$;
	 \item[$(ii)$] $V^{0,0} \cong \CC$, generated by $[1]\xi_\id$;
	 \item[$(iii)$] $V^{a,b} = 0$ for $a > N-2$ or $b > N-2$;
	 \item[$(iv)$] $V^{N-2,N-2} \cong \CC$, generated by $[\hess(f)]\xi_\id$.
	\end{enumerate}

\end{Proposition}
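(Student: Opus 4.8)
The plan is to reduce all four claims to a single estimate on the left and right charges inside each sector, and then to prove that estimate by a mild generalization of Proposition~\ref{prop: age greater than weights}.

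First I would work one sector at a time. For a homogeneous $[p]\in\Jac(f^g)$, formula~\eqref{eq: left right charges} gives $q_l([p]\xi_g)=\deg p-\sum_{k\in I_g^c}q_k+\age(g)$, and the analogous expression with $\age(g^{-1})$ for $q_r$. Since $f^g$ is itself a quasihomogeneous singularity in the variables indexed by $I_g$, Proposition~\ref{prop: top degree} bounds $0\le\deg p\le{\widehat c}(f^g)=\sum_{k\in I_g}(1-2q_k)$, the two extremes being realized by $[1]$ and $[\hess(f^g)]$. Hence in the $g$-sector both charges range between their values at $[1]\xi_g$ and at $[\hess(f^g)]\xi_g$. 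Substituting the Calabi--Yau identity $\sum_k q_k=1$ and $\age(g)+\age(g^{-1})=|I_g^c|$ (Proposition~\ref{prop: integral age}), the lower bounds $(i)$ and the upper bounds $(iii)$ both collapse to the single inequality $\age(g)\ge\sum_{k\in I_g^c}q_k$, applied to $g$ for one charge and to $g^{-1}$ for the other (note $I_{g^{-1}}^c=I_g^c$).

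The heart of the proof is therefore this age inequality for every diagonalizable $g\in\GL_f$, generalizing Proposition~\ref{prop: age greater than weights} from the case $N_g=0$. As there, I would reduce to $g\in G_f^d$ with $\Gamma_f$ connected and write $\overline g=E_f^{-1}s^{\mathsf{T}}$, so that $\age(g)=\sum_{k\in I_g^c}s_k q_k^{\mathsf{T}}$ by Proposition~\ref{prop: age by matrix} with $s_k\ge 1$ for $k\in I_g^c$. Since all $q_k^{\mathsf{T}}\ge 0$ this already gives $\age(g)\ge\sum_{k\in I_g^c}q_k^{\mathsf{T}}$, and it remains to compare the two weight systems on $I_g^c$, that is, to prove $\sum_{k\in I_g^c}q_k^{\mathsf{T}}\ge\sum_{k\in I_g^c}q_k$. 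This comparison is the main obstacle. The key structural input is Proposition~\ref{prop: group elements on graph}: the set $I_g$ is closed under the arrows of $\Gamma_f$, so after ordering the variables with $I_g$ first the matrix $E_f$ is block lower triangular, its top-left block being exactly the graph exponents matrix $E_{f^g}$ and its top-right block vanishing. Comparing $E_f^{\mathsf{T}}q^{\mathsf{T}}=\boldsymbol{1}$ with $E_{f^g}^{\mathsf{T}}(q^g)^{\mathsf{T}}=\boldsymbol{1}$ and invoking equation~\eqref{eq: sum of weights} for $f^g$, the deficit $\sum_{k\in I_g}q_k-\sum_{k\in I_g}q_k^{\mathsf{T}}$ is expressed as a product of the positive weights of $f^g$, the nonnegative off-diagonal block, and the nonnegative vector $q^{\mathsf{T}}|_{I_g^c}$, hence is $\ge 0$; this is equivalent to the desired comparison. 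I expect the bookkeeping of this block computation, together with the star-shaped case in which some $q_k^{\mathsf{T}}=0$, to be the only genuinely delicate point.

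Finally, $(ii)$ and $(iv)$ follow from the equality analysis. If $[p]\xi_g\in\B(f,G)$ realizes $q_l=q_r=0$ (resp.\ $=N-2$), then $q_l-q_r=\age(g)-\age(g^{-1})=0$ forces $\age(g)=\age(g^{-1})=|I_g^c|/2$, and the extremality of $\deg p$ together with the Calabi--Yau identity forces $\sum_{k\in I_g^c}q_k=|I_g^c|/2$. As every $q_k\le 1/2$, all weights on $I_g^c$ must equal $1/2$. Under the Calabi--Yau condition with $N\ge 3$ at most one variable can have weight $1/2$, so $|I_g^c|\le 1$; and $|I_g^c|=1$ would make $g$ act by $-1$ on a single variable, giving $\det g=-1$ and contradicting $g\in\SL_f$. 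Thus $g=\id$, $\deg p\in\{0,{\widehat c}(f)\}$, and $[p]$ is a multiple of $[1]$ respectively of $[\hess(f)]$. It then remains only to check invariance: $h^\ast(\xi_\id)=\xi_\id$ since $\Lambda(\id)=\CC$, and for $h\in\SL_f$ one computes $\hess(f)(h^{-1}\bx)=\det(h)^2\hess(f)(\bx)=\hess(f)(\bx)$, so both $[1]\xi_\id$ and $[\hess(f)]\xi_\id$ lie in $\B(f,G)$ and span the two corner spaces.
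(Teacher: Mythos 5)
Your reduction of (i) and (iii) to the single inequality $\age(g)\ge\sum_{k\in I_g^c}q_k$ (applied to $g$ and $g^{-1}$) is correct, and your equality analysis for (ii) and (iv) is also valid; but you have taken a genuinely different, and much heavier, route through the key inequality than the paper does, and in doing so you missed a trivialization that makes your ``heart of the proof'' unnecessary. Since $G\subseteq\SL_f$ is finite, every $g\neq\id$ in $G$ is diagonalizable of finite order, so Proposition~\ref{prop: integral age} gives $\age(g)\in\NN$ and $\age(g)>0$, i.e., $\age(g)\ge 1$; combined with the CY condition, $\sum_{k\in I_g^c}q_k\le\sum_{k=1}^N q_k=1\le\age(g)$, and your inequality holds in two lines. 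This is exactly the paper's argument: it never compares $q$ with the transposed weights $q^{\mathsf{T}}$ in this proposition, reserving Proposition~\ref{prop: age greater than weights} (the $N_g=0$ case, with its delicate star-shaped analysis) for the corner entries in Proposition~\ref{proposition: hd0}. Your proposed generalization of that proposition to arbitrary $N_g$ via the block lower-triangular structure of $E_f$ (using Proposition~\ref{prop: group elements on graph} to see that $I_g$ is closed under arrows, so that the deficit $\sum_{k\in I_g}q_k-\sum_{k\in I_g}q_k^{\mathsf{T}}$ equals a triple product of nonnegative data) does appear to be workable given the paper's nonnegativity results for $q^{\mathsf{T}}$ under CY, and would prove the stronger statement for all diagonalizable $g\in\GL_f$, not just $g\in\SL_f$; but as written it is only a sketch, with the ``bookkeeping'' and the star-shaped case deferred, and it imports the single most computational proposition of the paper into a place where integrality makes it superfluous. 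Once you replace that detour by the observation $\age(g)\ge 1$, your proof is complete and correct. Your treatment of (ii) and (iv) also differs from the paper's in a harmless way: where the paper derives the contradiction $N=\age(g)+\age\bigl(g^{-1}\bigr)+N_g=2$ against the standing assumption $N\ge 3$, you force all weights on $I_g^c$ to equal $1/2$ via Saito's normalization $q_k\le 1/2$ and then exclude $|I_g^c|=1$ by $\det g=-1$ (equivalently, by $\age(g)=1/2\notin\NN$); both are fine, and your explicit check that $[\hess(f)]\xi_\id$ is $G$-invariant (via $\hess(f)\bigl(h^{-1}\cdot\bx\bigr)=\det(h)^2\hess(f)(\bx)$) is a point the paper leaves implicit.
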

\begin{proof}
	Assume $X = [p]\xi_g$ for $p$ being a polynomial fixed by~$g$.

	(i) If $g = \id$ we have $q_l(X) = q_r(X) = \deg p \ge 0$. For $g \neq \id$ we have $\age(g) \in \NN_{\ge 1}$. Rewriting
	\[
		q_l(X) = \deg p -\sum_{k \in I_g^c} q_{k} + \age(g),
	\]
	we see that $q_l(X) \ge 0$ because \smash{$\sum_{k \in I_g^c} q_{k}\le \sum_{k =1}^{N} q_{k}=1$}. Similarly for $q_r(X)$ by the same argument applied to $\age\bigl(g^{-1}\bigr)$.

	(ii) If $g = \id$ we have that $q_l(X) = q_r(X) = 0$ if and only if $\deg p = 0$. By Propositions~\ref{prop: top degree} and \ref{prop: grading of Jac(f)}, we have $[p] = \alpha [1]$ in $\Jac(f)$ for some constant $\alpha \in \CC$.

	For $g \neq \id$, we just saw that $\age(g)\ge 1$ and \smash{$\sum_{k \in I_g^c} q_{k}\le 1$}, so $q_l(X) = q_r(X) = 0$ is achieved only if $\deg p=0$, $N_g=0$ and $\age(g)=\age(g^{-1})=1$, which implies \[N=\age(g)+\age\bigl(g^{-1}\bigr)+N_g=2.\]

	(iii) If $g=\id$, the statement follows from Proposition~\ref{prop: top degree} as given the CY condition we have $\hat{c}=N-2\sum_k q_k=N-2$.

 For $g \neq \id $, apply the same proposition again to estimate $\deg p$ in $\Jac(f^g)$. Namely, it gives
	\[
		q_l(X) \le N_g - 2\sum_{k \in I_g}q_k - \sum_{k \in I_g^c} q_{k}+ \age(g) = N_g - \sum_{k \in I_g}q_k -1 + \age(g).
	\]
	At the same time, we have $N_g + \age(g) = N - \age\bigl(g^{-1}\bigr) \le N-1$ because $\age\bigl(g^{-1}\bigr) \in \NN_{\ge 1}$. Combining this with the inequality above we get
\begin{equation}\label{eq: inequality on q_l}
		q_l(X) \le N - 2 - \sum_{k \in I_g}q_k \le N - 2.
\end{equation}
	One gets in the similar way that $q_r(X) \le N-2$.

	(iv) If $g = \id$, by Proposition~\ref{prop: top degree}, we see that $q_l([\hess(f)]\xi_\id)=q_r([\hess(f)]\xi_\id)=N-2$. If~$g \neq \id$, $q_l(X) = q_r(X) = N-2$, then equation~\eqref{eq: inequality on q_l} implies that \smash{$\sum_{k \in I_g}q_k=0$} and, hence, $N_g=0$ and $\age(g)=\age\bigl(g^{-1}\bigr)=1$, which altogether implies \[N=\age(g)+\age\bigl(g^{-1}\bigr)+N_g=2.\tag*{\qed} \]\renewcommand{\qed}{}
\end{proof}

We now construct two symmetries of $\B_{\rm tot}$. The \textit{horizontal} morphism $\Psi$ and the \textit{vertical} morphism $\Phi$.

Consider the direct sum decomposition of $\B_{\rm tot}$ as in equation~\eqref{eq: Btot in jac sectors}. We first extend the $\phi_{f,\beta}$ isomorphism of Proposition~\ref{prop: grading of Jac(f)} to $\B_{\rm tot}$ in the following way. The hessian matrix of $f^g$ viewed coordinate free is a bilinear form on the tangent bundle of $\Fix(g)$. Therefore, its determinant $\hess(f^g)$ is canonically an element of $\bigl(\Lambda^{N_g} \Fix(g)^\vee\bigr)^{\otimes 2}\otimes \mathbb{C}[\Fix(g)]$. Fix a generator \[\xi^\vee_g:=\iot_{\xi_g}{\rm d}x_1\wedge\cdots\wedge {\rm d}x_N\in\Lambda^{N_g} \Fix(g)^\vee,\] where $\iot$ is the interior product operator and let the generator of $\bigl(\Lambda^{N_g}\Fix(g)^\vee\bigr)^{\otimes 2}$ to be $(\xi^\vee_g)^{\otimes 2}$. This choice allows us to fix $\hess(f^g)$ as a function on $\Fix(g)$ and, hence fix a pairing $\eta_{f^g}$ for the $g$-sector. As in Proposition~\ref{prop: grading of Jac(f)}, this in turn defines an isomorphism $\phi_{f^g,\beta}$ on each sector.
Now the vertical morphism $\Phi$ is the direct sum of these isomorphisms acting on each sector of $\B_{\rm tot}$
\[
	\Phi := \bigoplus_{g \in \GL_f,\beta\in\QQ} \phi_{f^g,\beta} \colon \ \B_{\rm tot}\to \B_{\rm tot}^\vee.
\]
It is an isomorphism restricted to $\B_{{\rm tot},G}$ for any finite $G$ because each of $\phi_{f^g,\beta}$ is an isomorphism.

Define the horizontal morphism $\Psi$ to act on the $g$-th sector by $\Psi([p] \xi_g) := [p] \xi_{g^{-1}}$.
Extend it by linearity to all $\B_{\rm tot}$.
This is an isomorphism because \smash{$f^g = f^{g^{-1}}$} and \smash{$\Jac(f^g) = \Jac\bigl(f^{g^{-1}}\bigr)$}.
\begin{Proposition}\label{proposition: Serre and Hodge under CY}\quad
\begin{itemize}\itemsep=0pt
\item[$(1)$] The maps $\Phi$ and $\Psi$ are well defined on $\B(f,G)$ for any finite $G \subseteq \SL_f$.
\item[$(2)$]	For $f$ satisfying CY condition and a finite $G \subseteq \SL_f$, let $V^{a,b}$ stand for the bidegree $(a,b)$-subspace of $\B(f,G)$.
	Then the maps $\Psi$ and $\Phi$ induce the $\CC$-vector spaces isomorphisms
	\[
		V^{a,b} \cong V^{b,a} \qquad \text{and} \qquad V^{a,b} \cong \bigl(V^{N-2-b,N-2-a}\bigr)^\vee.
	\]
\end{itemize}
\end{Proposition}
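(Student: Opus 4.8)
The plan is to prove both statements on the ambient space $\B_{\rm tot}$ first and then descend to $\B(f,G)=(\B_{{\rm tot},G})^G$, exploiting that $G$ is finite and $\mathrm{char}\,\CC=0$.

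\textbf{Well-definedness of $\Psi$.} Since $\Psi$ carries the $g$-sector to the $g^{-1}$-sector and $G$ is a group, it preserves $\B_{{\rm tot},G}$. I would fix the sector generators compatibly so that $\xi_g=\xi_{g^{-1}}$; this is legitimate because $\Fix(g)=\Fix(g^{-1})$ forces $\Lambda(g)=\Lambda(g^{-1})$. With this choice the maps $h\colon\CC^N/\Fix(g)\to\CC^N/\Fix(hgh^{-1})$ and $h\colon\CC^N/\Fix(g^{-1})\to\CC^N/\Fix(hg^{-1}h^{-1})$ literally coincide (same domain, and $\Fix(hgh^{-1})=\Fix(hg^{-1}h^{-1})$), whence $\rho_{h,g}=\rho_{h,g^{-1}}$. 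Comparing $\Psi(h^*([p]\xi_g))=\rho_{h,g}[p(h^{-1}\bx)]\xi_{hg^{-1}h^{-1}}$ with $h^*(\Psi([p]\xi_g))=\rho_{h,g^{-1}}[p(h^{-1}\bx)]\xi_{hg^{-1}h^{-1}}$ then gives $\Psi h^*=h^*\Psi$, so $\Psi$ preserves $G$-invariants.

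\textbf{Well-definedness of $\Phi$ (the main point).} I would encode $\Phi$ as the symmetric bilinear form $\langle X,Y\rangle:=\Phi(X)(Y)$ on $\B_{\rm tot}$, equal to $\eta_{f^g}([p],[q])$ when $X=[p]\xi_g$, $Y=[q]\xi_g$ are in the same sector and $0$ across distinct sectors, and show it is $G$-invariant. The crux is the naturality of the residue pairing under the linear isomorphism $h\colon\Fix(g)\to\Fix(hgh^{-1})$: as $f$ is $h$-invariant we have $f^g=f^{hgh^{-1}}\circ h$, so pullback $[r]\mapsto[r\circ h]$ is a graded ring isomorphism $\Jac(f^{hgh^{-1}})\cong\Jac(f^g)$, and by the chain rule for Hessians it sends the socle generator $[\hess(f^{hgh^{-1}})]$ to $\det(h|_{\Fix(g)})^{-2}[\hess(f^g)]$. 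Here the hypothesis $G\subseteq\SL_f$ enters decisively: with the normalization $\xi^\vee_g=\iot_{\xi_g}{\rm d}x_1\wedge\cdots\wedge{\rm d}x_N$, multiplicativity of determinants in $0\to\Fix(g)\to\CC^N\to\CC^N/\Fix(g)\to 0$ together with $\det h=1$ forces $\det(h|_{\Fix(g)})=\rho_{h,g}^{-1}$. Tracing degrees (using that $h^*$ is graded, hence commutes with projection to the top component) yields
\[
\eta_{f^{hgh^{-1}}}\bigl([p(h^{-1}\bx)],[q(h^{-1}\bx)]\bigr)=\rho_{h,g}^{-2}\,\eta_{f^g}([p],[q]),
\]
and the factor $\rho_{h,g}^2$ produced by $h^*\xi_g$ cancels the $\rho_{h,g}^{-2}$, giving $\langle h^*X,h^*Y\rangle=\langle X,Y\rangle$. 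Finally, a $G$-invariant nondegenerate form is orthogonal between the trivial isotypic piece $\B(f,G)$ and the remaining isotypic components (the averaging projector kills cross terms), so it restricts to a nondegenerate form on $\B(f,G)$; equivalently $\Phi$ restricts to an isomorphism $\B(f,G)\cong\B(f,G)^\vee$.

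\textbf{The induced isomorphisms on bigraded pieces.} This is bookkeeping with the charges \eqref{eq: left right charges}. Because $\Fix(g)=\Fix(g^{-1})$ gives $I_g^c=I_{g^{-1}}^c$ while the two age terms swap, $\Psi$ exchanges $q_l$ and $q_r$, so $\Psi\colon V^{a,b}\xrightarrow{\sim}V^{b,a}$ (it is an involution, hence an isomorphism). For $\Phi$, I would pair $[p]\xi_g$ of degree $\beta$ with $[q]\xi_g$ of the complementary degree $\widehat c(f^g)-\beta$ and compute, using $\age(g)+\age(g^{-1})=N-N_g$, the value $\widehat c(f^g)=N_g-2\sum_{k\in I_g}q_k$, and the CY condition $\sum_k q_k=1$, that $q_l([p]\xi_g)+q_r([q]\xi_g)=q_r([p]\xi_g)+q_l([q]\xi_g)=N-2$. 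Thus $\eta_{f^g}$ pairs bidegree $(a,b)$ with $(N-2-b,N-2-a)$, and since $\Phi$ is a bigraded isomorphism on $\B(f,G)$ the graded pieces match to give $V^{a,b}\cong(V^{N-2-b,N-2-a})^\vee$.

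I expect the single genuine obstacle to be the $G$-invariance of $\Phi$ in the second paragraph: reconciling the factor $\rho_{h,g}^2$ coming from the action on the sector generator $\xi_g$ with the Jacobian factor arising when the residue pairing is transported along $h$. This is precisely where $G\subseteq\SL_f$ is indispensable, and it is what dictates the compatible normalization $\xi^\vee_g=\iot_{\xi_g}{\rm d}x_1\wedge\cdots\wedge{\rm d}x_N$ built into the definition of $\Phi$.
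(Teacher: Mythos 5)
Your proposal is correct and follows essentially the same route as the paper: the same normalization $\xi^\vee_g=\iot_{\xi_g}{\rm d}x_1\wedge\cdots\wedge {\rm d}x_N$, the same key identity $\rho_{h,g}^{2}\,\eta_{f^{hgh^{-1}}}\bigl(\bigl[p_1\bigl(h^{-1}\cdot\bx\bigr)\bigr],\bigl[p_2\bigl(h^{-1}\cdot\bx\bigr)\bigr]\bigr)=\eta_{f^{g}}([p_1],[p_2])$ obtained from $\det(h)=1$, and the same charge bookkeeping via $\widehat c(f^g)=\sum_{k\in I_g}(1-2q_k)$, $\age(g)+\age\bigl(g^{-1}\bigr)=N-N_g$ and the CY condition. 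You merely spell out steps the paper leaves implicit (the Hessian chain rule behind $\rho_{h,g}h^*\bigl(\xi^\vee_{hgh^{-1}}\bigr)=\xi^\vee_g$, the compatible choice $\xi_g=\xi_{g^{-1}}$ making $\rho_{h,g}=\rho_{h,g^{-1}}$, and the averaging argument for nondegeneracy on invariants), all of which are sound.
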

\begin{proof}
1) The map $\Psi$ commutes with the $G$-action since $\Fix(g)=\Fix\bigl(g^{-1}\bigr)$. Hence $\Psi$ preserves the invariants.

To see that $\Phi$ commute with the $G$-action, recall first that, $f\bigl(h^{-1}\cdot \mathbf{x}\bigr)=f(\mathbf{x})$ and, hence, $f^g\bigl(h^{-1}\cdot \mathbf{x}\bigr)=f^{hgh^{-1}}(\mathbf{x})$.
Furthermore, since $\det(h)=1$ we have $\rho_{h,g}h^*\bigl(\xi_{hgh^{-1}}^\vee\bigr)= \xi^\vee_g$ and we can conclude that
\[
\rho_{h,g}^{2}\eta_{f^{hgh^{-1}}}\bigl(\bigl[p_1\bigl(h^{-1}\cdot\mathbf{x}\bigr)\bigr]\xi_{hgh^{-1}},
\bigl[p_2\bigl(h^{-1}\cdot\mathbf{x}\bigr)\bigr]\xi_{hgh^{-1}}\bigr)=
\eta_{f^{g}}([p_1(\mathbf{x})]\xi_{g},[p_2(\mathbf{x})]\xi_{g}).
\]
Now, by the definition of $G$-action we get
\[
\eta_{f^{hgh^{-1}}}(h^*([p_1(\mathbf{x})]\xi_{g}),h^*([p_2(\mathbf{x})]\xi_{g}))=
\eta_{f^{g}}([p_1(\mathbf{x})]\xi_{g},[p_2(\mathbf{x})]\xi_{g}).
\]
This implies the statement.

2) We have directly by the definition that $q_l(\Psi(X)) = q_r(X)$ and $q_r(\Psi(X)) = q_l(X)$. The first isomorphism follows.

To verify compatibility of $\Phi$ with the grading, note first, that $\widehat{c}(f^g)=\sum_{k\in I_g}(1-2q_k)$. Thus, by Proposition~\ref{prop: grading of Jac(f)} the left charge of $[\phi_{f^g,\deg p}(p)]\xi_g$ is given by
\begin{align*}q_l([\phi_{f^g,\deg p}(p)]\xi_g) &= \sum_{k\in I_g}(1-2q_k)-\deg p- \sum_{k \in I_g^c} q_{k} + \age{(g)}
\\ &=N_g-2\sum_{k\in I_g} q_k-\deg p- \sum_{k \in I_g^c} q_{k}+\bigl(N-N_g-\age{\bigl(g^{-1}\bigr)}\bigr)
\\ &=-2+\sum_{k\in I^c_g} q_k-\deg p +N-\age{\bigl(g^{-1}\bigr)}=N-2-q_r([p]\xi_g).
\end{align*}
The computation for the right charge is identical.
\end{proof}

Consider now two more special graded pieces of $\B(f,G)$.

\begin{Proposition}\label{proposition: hd0}
	For $f$ satisfying CY condition and a finite $G$, such that $J \subseteq G \subseteq \SL_f$, let~$V^{a,b}$ stand for the bidegree $(a,b)$-subspace of $\B(f,G)$.
	Then
	\begin{enumerate}\itemsep=0pt
	 \item[$(1)$] $V^{N-2,0} \cong \CC$, generated by $[1]\xi_{j_f^{-1}}$,
	 \item[$(2)$] $V^{0,N-2} \cong \CC$, generated by $[1]\xi_{j_f}$.
	\end{enumerate}
\end{Proposition}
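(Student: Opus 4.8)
The plan is to prove statement (2) directly and then transport it to (1) using the horizontal isomorphism $\Psi$ of Proposition~\ref{proposition: Serre and Hodge under CY}. First I would exhibit the generator. Since $0 < q_k < 1$ for every $k$, the element $j_f$ has no eigenvalue equal to $1$, so $\Fix(j_f)=0$, $N_{j_f}=0$, $I_{j_f}^c=\{1,\dots,N\}$ and $f^{j_f}=0$, whence $\Jac(f^{j_f})=\CC\langle[1]\rangle$. Then $\age(j_f)=\sum_k q_k=1$ and $\age\bigl(j_f^{-1}\bigr)=N-N_{j_f}-\age(j_f)=N-1$ by Proposition~\ref{prop: integral age}(1). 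Substituting into the charge formula \eqref{eq: left right charges} with $\deg p=0$ and $\sum_{k\in I_{j_f}^c}q_k=1$ gives $q_l([1]\xi_{j_f})=0$ and $q_r([1]\xi_{j_f})=N-2$. Moreover $[1]\xi_{j_f}$ is $G$-invariant: by Example~\ref{ex: J--action and bigrading}, $h^*([1]\xi_{j_f})=\det(h)[1]\xi_{j_f}=[1]\xi_{j_f}$ for all $h\in G\subseteq\SL_f$. Thus $[1]\xi_{j_f}\in V^{0,N-2}$ is nonzero.

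The main work is the converse: I would show that every $[p]\xi_g\in\B(f,G)$ of bidegree $(0,N-2)$ is a scalar multiple of $[1]\xi_{j_f}$. Writing out the two charge equations and subtracting yields $\age\bigl(g^{-1}\bigr)-\age(g)=N-2$, while Proposition~\ref{prop: integral age}(1) gives $\age(g)+\age\bigl(g^{-1}\bigr)=N-N_g$; together these force $\age(g)=1-N_g/2$. Since $g$ has finite order (as $G$ is finite) it is diagonalizable, so $\age(g)\ge 0$ gives $N_g\le 2$, and $g\in\SL_f$ forces $\age(g)\in\NN$ by Proposition~\ref{prop: integral age}(3), so $N_g$ is even, i.e.\ $N_g\in\{0,2\}$. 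If $N_g=2$ then $\age(g)=0$, which by Proposition~\ref{prop: integral age}(2) means $g=\id$ and hence $N_g=N\ge 3$, a contradiction. Therefore $N_g=0$ and $\age(g)=1$; here Proposition~\ref{prop: age greater than weights} applies, and its sharp equality case $\age(g)=\sum_k q_k$ pins down $g=j_f$ uniquely. Finally the equation $q_l=0$ reads $\deg p-1+1=0$, so $\deg p=0$ and $[p]\in\CC\langle[1]\rangle$ in $\Jac(f^{j_f})=\CC$. This proves $V^{0,N-2}=\CC\langle[1]\xi_{j_f}\rangle$.

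For statement (1), I would invoke Proposition~\ref{proposition: Serre and Hodge under CY}: the horizontal map $\Psi$ is a well-defined automorphism of $\B(f,G)$ inducing $V^{a,b}\cong V^{b,a}$, so $V^{N-2,0}\cong V^{0,N-2}\cong\CC$, and since $\Psi([1]\xi_{j_f})=[1]\xi_{j_f^{-1}}$ with $j_f^{-1}\in J\subseteq G$, the space $V^{N-2,0}$ is generated by $[1]\xi_{j_f^{-1}}$. The only genuine obstacle is the converse step above; everything hinges on combining the two age identities with the integrality of $\age$ on $\SL_f$ and the equality case of Proposition~\ref{prop: age greater than weights}, which is precisely what rules out every sector other than $j_f$.
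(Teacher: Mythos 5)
Your proposal is correct and follows essentially the same route as the paper: the paper also reduces to one statement via $\Psi$ (Proposition~\ref{proposition: Serre and Hodge under CY}), derives $\age(g)=1-N_g/2$ from equations~\eqref{eq: age g and inverse} and~\eqref{eq: left right charges}, invokes Proposition~\ref{prop: integral age} to force $N_g=0$ and $\age(g)=1$, and then uses the equality case of Proposition~\ref{prop: age greater than weights} to pin down $g=j_f$. Your write-up merely makes explicit the parity argument ruling out $N_g=2$ and the $G$-invariance check that the paper leaves as ``one notes immediately.''
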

\begin{proof}
 One notes immediately that $[1]\xi_{j_f}$ and \smash{$[1]\xi_{j_f^{-1}}$} are non-zero in $\B(f,G)$ and belong to~$V^{0,N-2}$ and $V^{N-2,0}$ respectively. By Proposition~\ref{proposition: Serre and Hodge under CY} it is enough to show one of the statements.

 \begin{Lemma}
 $\dim V^{0,N-2} = \lbrace g \in G \mid \age(g) = 1$, $ N_g = 0 \rbrace$.
 \end{Lemma}
 \begin{proof}
 Let $[p]\xi_g \in V^{0,N-2}$. It follows from equations~\eqref{eq: age g and inverse} and \eqref{eq: left right charges} that $\age(g) = 1 - N_g/2$. The statement follows by Proposition~\ref{prop: integral age}.
 \end{proof}

 Under the CY condition for $g \in \GL_f \backslash \lbrace \id \rbrace$ of finite order and with integral $\age(g)$ we have by Proposition~\ref{prop: age greater than weights} that $\age(g) \ge 1$ with the equality being reached only for $g = j_f$. This completes the proof.
\end{proof}
This completes the proof of Theorem~\ref{theorem: main}.

For a fixed pair $(f,G)$ set{\samepage
\begin{align*}
	h^{a,b} := \dim_\CC \left\lbrace X \in \B(f,G) \mid (q_l(X),q_r(X)) = (a,b)\right\rbrace
\end{align*}
and denote $D := N-2$.}

It follows from the propositions above that for $f$ satisfying CY condition and $G$ such that $J \subseteq G \subseteq \SL_f$, the numbers $h^{a,b}$ form a diamond,
\[
\begin{tikzpicture}
\begin{scope}[x={(-1.25cm,-.75cm)},y={(1.25cm,-.75cm)}]
\node at (0,0) {$h^{0,0}$};
\node at (0,1) {$h^{0,1}$};
\node at (1,0) {$h^{1,0}$};
\node at (1,1) {$h^{1,1}$};
\node at (2,0) {$h^{2,0}$};
\node at (0,2) {$h^{0,2}$};
\node at (3,0) {$\rdots$};
\node at (0,3) {$\ddots$};
\node at (1.5,1.5) {$\vdots$};
\node at (4,0) {$h^{D,0}$};
\node at (0,4) {$h^{0,D}$};
\node at (4,1) {$\ddots$};
\node at (1,4) {$\rdots$};
\node at (3,3) {$\vdots$};
\node at (4,2) {$\ddots$};
\node at (2,4) {$\rdots$};
% \node at (3,3) {$\vdots$};
\node at (4,3) {$h^{D,D-1}$};
\node at (3,4) {$h^{D-1,D}$};
\node at (4,4) {$h^{D,D}$};
\node [label=below:$\Psi$] at (2-.125, 2-.125) {$\curvearrowleft$};
\node [label=right:$\Phi$] at (-.5, 4 + .5) {$\updownarrow$};
\draw[very thick] (1-.105-.125,2+.105-.125) rectangle (2+.105+.125,3-.105+.125);
\draw[very thick] (2-.105-.125,1+.105-.125) rectangle (3+.105+.125,2-.105+.125);
\draw[very thick] (1+0.25-.105-.225,3-0.25+.105-.225) rectangle (1+0.25+.105+.225,3-0.25-.105+.225);
\draw[very thick] (3-.5+0.25-.105-.225,1+.5-0.25+.105-.225) rectangle (3-.5+0.25+.105+.225,1+.5-0.25-.105+.225);
% \draw[very thick] (1-.105,2+.105) rectangle (2+.105,3-.105);
\draw[dash dot] (4-.25,.25) -- (.25,4-.25);
\draw[->] (-1,2) .. controls (0,3) .. (1,2);
\node at (-1-.125-.105,2+.125-.105) {$g$-th sector};
\node at (-1-.5+.125,2+.5+.125) {$h$-th sector};
\draw[->] (-1,2+.5) .. controls (0,3) .. (1+0.25,3-0.25);
\node at (4+1+.125+.25,4-2-.125+.25) {$g^{-1}$-th sector};
\node at (4+1+.5-.125,4-2-.5-.125) {$h^{-1}$-th sector};
\draw[->] (4+1-.25+.25,4-2-.5) .. controls (4-.25,1+.25) .. (3-.5+0.25+.105+.225-.105-.105,1+.5-0.25-.105+.225);
\draw[->] (4+1,4-2+.25) .. controls (4,1+.5) .. (3,2-.105+.125-.105);
\end{scope}
\end{tikzpicture}
\]

Let us call the line $\bigl\lbrace h^{a,b} \mid a+b = D \bigr\rbrace$ the horizontal line and the line $\bigl\lbrace h^{a,b} \mid a-b = 0 \bigr\rbrace$ the vertical line.
The Hodge diamond $\bigl\lbrace h^{a,b} \bigr\rbrace_{a,b=0}^D$ has the following special properties
\begin{enumerate}\itemsep=0pt
 \item[(1)] The $g$-th sector of $\B(f,G)$ contributes as a line symmetric with respect to the horizontal line.
 \item[(2)] Every $g$-th sector of $\B(f,G)$ contributes together with a $g^{-1}$-th sector of $\B(f,G)$, located symmetrically with respect to the vertical line.
 \item[(3)] All the elements of the form $\xi_{j_f^k}$ contribute to the horizontal line. In particular, $h^{a,D-a} \ge 1$ for all $a=0,\dots,D$.
 \item[(4)] All the elements of the form $[p]\xi_\id$ contribute to the vertical line.
\end{enumerate}

\begin{Example}
Consider $f = x_1^2 x_2 + x_2^2 + x_2 x_3^6 + x_4^6 + x_1x_3^9$ and $G = \SL_f$. Then $G = J = \langle j_f \rangle$ with
\[
	\overline j_f = \left(\frac{1}{4}, \frac{1}{2}, \frac{1}{12}, \frac{1}{6} \right).
\]
The basis of $\B(f,G)$ is given by the elements
\begin{gather*}
		 \xi_{j_f^3},\qquad \xi_{j_f^5}, \qquad \xi_{j_f^7}, \qquad \xi_{j_f^9},
\qquad [x_1]\xi_{j_f^4},\qquad [x_1]\xi_{j_f^8},\qquad \bigl[x_4^2\bigr]\xi_{j_f^6},\qquad \bigl[x_3^4 x_4^4\bigr]\xi_\id,
		\\
		 \bigl[x_1 x_3 x_4^4\bigr]\xi_\id, \qquad \bigl[x_1 x_3^3 x_4^3\bigr]\xi_\id, \qquad \bigl[x_2 x_4^3\bigr]\xi_\id,\qquad \bigl[x_1^2 x_4^3\bigr]\xi_\id,\qquad \bigl[x_1 x_3^5 x_4^2\bigr]\xi_\id,\\
\bigl[x_2 x_3^2 x_4^2\bigr]\xi_\id,
	\qquad \bigl[x_1^2 x_3^2 x_4^2\bigr]\xi_\id, \qquad \bigl[x_2 x_3^4 x_4\bigr]\xi_\id,\qquad  [x_1 x_2 x_3 x_4 ]\xi_\id, \qquad \bigl[x_1^3 x_3 x_4\bigr]\xi_\id,\\
\bigl[x_1^2\bigr]\xi_\id, \qquad \bigl[x_2^2\bigr]\xi_\id.
\end{gather*}
all having the bigrading $(1,1)$, and the elements
\[
	\xi_{j_f}, \qquad \xi_{j_f^{11}}, \qquad [1] \xi_\id, \qquad \bigl[x_1 x_2^2 x_3 x_4^4\bigr]\xi_\id,
\]
having the bigrading $(0,2)$, $(2,0)$, $(0,0)$ and $(2,2)$, respectively.

One gets the following diamond:
\[
 \begin{tikzpicture}
 \begin{scope}[x={(-0.50cm,-.50cm)},y={(0.50cm,-.50cm)}]
 \draw (0,0) node {1};
 \draw (1,0) node {0};
 \draw (0,1) node {0};
 \draw (2,2) node {1};
 \draw (2,0) node {1};
 \draw (0,2) node {1};
 \draw (1,2) node {0};
 \draw (2,1) node {0};
 \draw (1,1) node {20};
 \end{scope}
\end{tikzpicture}
\]
\end{Example}

\begin{Example}
Let $f=x_1^5+x_2^5+x_3^5+x_4^5+x_5^5$ and $G=S\ltimes J$, where $S=\langle (1,2), (2,3)\rangle\subset S_5$ is the subgroup permuting first 3 variables and preserving the last two. Pick $\xi_{(1,2,3)}$ and $\xi_{(1,3,2)}$ in such a way that $(1,2)(\xi_{(1,2,3)})=\xi_{(1,3,2)}$; $\xi_{(1,2,3)j_f}$ and $\xi_{(1,3,2)j_f}$ in such a way that $(1,2)(\xi_{(1,2,3)j_f})=\xi_{(1,3,2)j_f}$ and so on. Then the basis of $\B(f,G)$ is
\[
\xi_{\id} \qquad \text{and} \qquad \bigl[x_1^3x_2^3x_3^3x_4^3x_5^3\bigr]\xi_{\id}
\]
in bidegrees $(0,0)$ and $(3,3)$, respectively;
\begin{gather*}
[x_1x_2x_3x_4x_5]\xi_{\id},\qquad\bigl[x_4^3x_5^2\bigr]\xi_{\id},\qquad\bigl[x_4^2x_5^3\bigr]\xi_{\id}, \qquad\bigl[x_1x_2x_3x_4^2\bigr]\xi_{\id},\qquad \bigl[x_1x_2x_3x_5^2\bigr]\xi_{\id},\\
\bigl[(x_1+x_2+x_3)^2\bigr](\xi_{(1,2,3)}+\xi_{(1,3,2)}),\qquad
\bigl[x_4^2\bigr](\xi_{(1,2,3)}+\xi_{(1,3,2)}),\\ \bigl[x_5^2\bigr](\xi_{(1,2,3)}+\xi_{(1,3,2)}),\qquad
[(x_1+x_2+x_3)x_4](\xi_{(1,2,3)}+\xi_{(1,3,2)}),\\ [(x_1+x_2+x_3)x_5](\xi_{(1,2,3)}+\xi_{(1,3,2)}),\qquad [x_4x_5](\xi_{(1,2,3)}+\xi_{(1,3,2)})
\end{gather*}
in bidegree $(1,1)$;
\[
\xi_{j_f} \qquad \text{and} \qquad \xi_{j_f^4}
\]
in bidegrees $(3,0)$ and $(0,3)$, respectively;
\[
\xi_{j_f^2}, \qquad \xi_{(1,2,3)j_f}+\xi_{(1,3,2)j_f}, \qquad \xi_{(1,2,3)j_f^2}+\xi_{(1,3,2)j_f^2}
\]
in bidegree $(2,1)$;
\[
\xi_{j_f^3}, \qquad \xi_{(1,2,3)j_f^3}+\xi_{(1,3,2)j_f^3}, \qquad \xi_{(1,2,3)j_f^4}+\xi_{(1,3,2)j_f^4}
\]
in bidegree $(1,2)$;
\begin{gather*}
\bigl[x_1^2x_2^2x_3^2x_4^2x_5^2\bigr]\xi_{\id},\qquad \bigl[x_1^2x_2^2x_3^2x_4^3x_5\bigr]\xi_{\id},\qquad\bigl[x_1^2x_2^2x_3^2x_4x_5^3\bigr]\xi_{\id}, \qquad\bigl[x_1^3x_2^3x_3^3x_4\bigr]\xi_{\id},\\
\bigl[x_1^3x_2^3x_3^3x_5\bigr]\xi_{\id},\qquad
\bigl[(x_1+x_2+x_3)^3x_4^3x_5\bigr](\xi_{(1,2,3)}+\xi_{(1,3,2)}),\\
\bigl[(x_1+x_2+x_3)^3x_4x_5^3\bigr](\xi_{(1,2,3)}+\xi_{(1,3,2)}),\qquad
\bigl[(x_1+x_2+x_3)x_4^3x_5^3\bigr](\xi_{(1,2,3)}+\xi_{(1,3,2)}),\\
\bigl[(x_1+x_2+x_3)^3x_4^2x_5^2\bigr](\xi_{(1,2,3)}+\xi_{(1,3,2)}),\qquad \bigl[(x_1+x_2+x_3)^2x_4^3x_5^2\bigr](\xi_{(1,2,3)}+\xi_{(1,3,2)}),\\ \bigl[(x_1+x_2+x_3)^2x_4^2x_5^3\bigr](\xi_{(1,2,3)}+\xi_{(1,3,2)})
\end{gather*}
in bidegree $(2,2)$.

This gives the following diamond:
\[
 \begin{tikzpicture}
 \begin{scope}[x={(-0.50cm,-.50cm)},y={(0.50cm,-.50cm)}]
 \draw (0,0) node {1};
 \draw (1,0) node {0};
 \draw (0,1) node {0};
 \draw (2,2) node {11};
 \draw (2,0) node {0};
 \draw (0,2) node {0};
 \draw (1,2) node {3};
 \draw (2,1) node {3};
 \draw (1,1) node {11};
 \draw (3,0) node {1};
 \draw (0,3) node {1};
 \draw (3,1) node {0};
 \draw (1,3) node {0};
 \draw (3,2) node {0};
 \draw (2,3) node {0};
 \draw (3,3) node {1};
 \end{scope}
\end{tikzpicture}
\]
\end{Example}

\subsection*{Acknowledgements}
The work of Alexey Basalaev was supported by International Laboratory of Cluster Geometry NRU HSE, RF Government grant, ag. no.~075-15-2021-608 dated 08.06.2021.
The authors are grateful to Anton Rarovsky for sharing the pictures from his bachelor thesis.
The authors are very grateful to the anonymous referees for many valuable comments.

\pdfbookmark[1]{References}{ref}
\LastPageEnding

\end{document}